\newenvironment{polynomial}
{\par\vspace{\abovedisplayskip}%
	\setlength{\leftskip}{\parindent}%
	\setlength{\rightskip}{\leftskip}%
	\medmuskip=4mu plus 2mu minus 2mu
	\binoppenalty=0
	\noindent$\displaystyle}
{$\par\vspace{\belowdisplayskip}}
\def\cB{{\mathcal B}}
\def\cM{{\mathcal{MM}}}
\def\cGM{{\mathcal{GMM}}}
\def\cM{{\mathcal {M}}}
\def\F{{\mathbb F}}
\def\F{{\mathbb F}}
\def\00{{\bf 0}}
\def\+{\oplus}
\def\\{\cr}
\def\({\left(}
\def\){\right)}
\providecommand{\newoperator}[3]{%
  \newcommand*{#1}{\mathop{#2}#3}}
\newoperator{\FD}{\mathrm{FD}}{\nolimits}
\newcommand{\EQ}{\begin{equation}}
\newcommand{\EN}{\end{equation}}
\def\whitebox{{\hbox{\hskip 1pt
        \vrule height 6pt depth 1.5pt
        \lower 1.5pt\vbox to 7.5pt{\hrule width
                  3.2pt\vfill\hrule width 3.2pt}%
        \vrule height 6pt depth 1.5pt
        \hskip 1pt } }}
\def\qed{\ifhmode\allowbreak\else\nobreak\fi\hfill\quad\nobreak\whitebox\medbreak}
\theoremstyle{plain}
\newtheorem{theo}{Theorem}[section]
\newtheorem{cor}[theo]{Corollary}
\newtheorem{lemma}[theo]{Lemma}
\newtheorem{prop}[theo]{Proposition}
\newtheorem{ex}[theo]{Example}
\theoremstyle{definition}
\newtheorem{rem}[theo]{Remark}
\newtheorem{defi}[theo]{Definition}
\newtheorem{op}{Open Problem}
\algnewcommand{\IfOneRow}[1]{\State\algorithmicif\ #1,}
\algnewcommand{\EndifOneRow}{}
\renewcommand{\ALG@name}{Algorithm}
\numberwithin{theo}{section}
\numberwithin{equation}{section}
\numberwithin{table}{section}
\numberwithin{figure}{section}
\renewcommand*{\backref}[1]{}
\renewcommand*{\backrefalt}[4]{%
	\ifcase #1 (Not cited.)%
	\or        (Cited on page~#2.)%
	\else      (Cited on pages~#2.)%
	\fi}
\date{}
\title{Almost Maiorana-McFarland bent functions}
\author{Sadmir Kudin \footnote{University of Primorska, FAMNIT \&
        IAM, Koper, Slovenia, e-mail: sadmir.kudin@iam.upr.si}
    \and  Enes Pasalic \footnote{University of Primorska, FAMNIT \&
        IAM, Koper, Slovenia, e-mail: enes.pasalic6@gmail.com}
    \and Alexandr Polujan \footnote{Otto-von-Guericke University, Magdeburg, Germany, e-mail: alexandr.polujan@gmail.com}
\and Fengrong
Zhang \footnote{School of Cyber Engineering,  Xidian University, Xian, 710071, P.R. China, e-mail:zhfl203@163.com}
\and Haixia Zhao \footnote{Key Laboratory of Cognitive Radio and Information Processing, Ministry of Education, and also with School of Mathematics and Computational Science,
	Guilin University of Electronic Technology,
	Guilin 541004, China, e-mail: guetzhx@163.com}
}
\begin{document}
\maketitle              

\begin{abstract}
%	{\color{red}
	 In this article, we study bent functions on $ \mathbb{F}_2^{2m} $ of the form $ f(x,y) = x \cdot \phi(y) + h(y) $, where $ x \in \mathbb{F}_2^{m-1} $ and $ y \in \mathbb{F}_2^{m+1} $, which form the generalized Maiorana-McFarland class (denoted by $ \mathcal{GMM}_{m+1} $) and are referred to as almost Maiorana-McFarland bent functions. We provide a complete characterization of the bent property for such functions and determine their duals. Specifically, we show that $f$ is bent if and only if the mapping $ \phi $ partitions $ \mathbb{F}_2^{m+1} $ into 2-dimensional affine subspaces, on each of which the function $ h $ has odd weight. While the partition of $\mathbb{F}_2^{m+1} $ into 2-dimensional affine subspaces is crucial for the bentness, we demonstrate that the algebraic structure of these subspaces plays an even greater role in ensuring that the constructed bent functions $f$ are excluded from the completed Maiorana-McFarland class $ \mathcal{M}^\# $ (the set of bent functions that are extended-affine equivalent to the Maiorana-McFarland class $\mathcal{M}$). Consequently, we investigate which properties of mappings $ \phi \colon \mathbb{F}_2^{m+1} \to \mathbb{F}_2^{m-1} $ lead to bent functions of the form $ f(x,y) = x \cdot \phi(y) + h(y) $ both inside and outside $ \mathcal{M}^\# $ and provide construction methods for suitable Boolean functions $ h $ on $ \mathbb{F}_2^{m+1} $. As part of this framework, we present a simple algorithm for constructing partitions of the vector space $ \mathbb{F}_2^{m+1} $ together with appropriate Boolean functions $ h $ that generate bent functions outside $ \mathcal{M}^\# $. When $ 2m = 8 $, we explicitly identify many such partitions that produce at least $ 2^{78} $ distinct bent functions on $ \mathbb{F}_2^8 $ that do not belong to $ \mathcal{M}^\# $, thereby generating more bent functions outside $ \mathcal{M}^\# $ than the total number of 8-variable bent functions in $ \mathcal{M}^\# $ (whose cardinality is approximately $2^{77} $). Additionally, we demonstrate that concatenating four almost Maiorana-McFarland bent functions outside $ \mathcal{M}^\# $, i.e., defining $ f = f_1 || f_2 || f_3 || f_4 $ where $ f_i \notin \mathcal{M}^\# $, can result in a bent function $ f \in \mathcal{M}^\# $. This finding essentially answers an open problem posed recently in Kudin et al. (IEEE Trans. Inf. Theory 71(5): 3999–4011, 2025). Conversely, using a similar approach to concatenate four functions $ f_1 || f_2 || f_3 || f_4 $, where each $ f_i \in \mathcal{M}^\# $, we generate bent functions that are provably outside $ \mathcal{M}^\# $.
\end{abstract}

{\bf Keywords: }
	Bent function, Secondary construction, Generalized Maiorana-McFarland class,  Equivalence, Vector space partition.

\section{Introduction}\label{sec:intr}

Bent functions form a special class of Boolean functions in an even number of variables, characterized by a flat Walsh spectrum. These functions are notable for their rich combinatorial properties and applications in cryptography. For example, their Hamming distance to the set of affine functions is maximal. A Boolean function is bent if and only if its support forms a Hadamard difference set~\cite{Dillon1974}. Moreover, the Cayley graphs constructed from bent functions are strongly regular~\cite{BernasconiC1999}. The notion of bent functions was introduced by Rothaus in the mid sixties, and in 1976  the same author provided~\cite{Rothaus1976}  one of the first secondary methods of constructing new bent functions from the known ones.

The construction methods of bent functions can, in general,  be divided into two categories:
 primary (direct) construction methods  and secondary constructions. The former approach does not  exploit some known bent functions in the design and it  rather uses a suitable set of affine functions (typical for the Maiorana-McFarland method \cite{McFarland1973}) or a collection of disjoint $n/2$-dimensional subspaces to construct a bent function on $\F_2^n$ (typical for the partial spread class introduced by Dillon \cite{Dillon1974}). These direct approaches also give rise to so-called primary classes of bent functions. The known primary classes of bent functions include the Partial Spread ($\mathcal{PS}$) class, introduced by Dillon \cite{Dillon1974}, and the Maiorana-McFarland ($\mathcal{M}$) class, first presented in \cite{McFarland1973}.
For  an exhaustive survey on bent functions the interested reader is
referred to \cite{CarlMes2016} and the recent monograph on Boolean functions by Carlet \cite{Carlet2021}. 

The secondary constructions employ some initial bent functions along with a set of conditions (which are easier or harder to satisfy) to generate new bent functions. Carlet \cite{CC93} introduced two new classes of bent functions, the so-called $\mathcal{C}$ and $\mathcal{D}$, by suitably modifying functions from the $\mathcal{M}$ class, and showed that a subclass $\mathcal{D}_0$ contains instances that are outside the completed classes $\mathcal{PS}^\#$ and $\mathcal{M}^\#$. Even though certain efforts of specifying more bent functions outside $\cM^\#$ has been made e.g. in \cite{RothEnes2016,OutsideMM,BFAExtended,Polujan2020,DAM2020,Bent_Decomp2022,ZPBBInfComp,PPKZ_BFA23_CCDS},
 a major progress has been achieved recently in \cite{PPKZ2023,ISIT-IEEE2024} where large families of bent functions outside $\cM^\#$ were obtained by concatenating suitable bent functions in the Maiorana-McFarland class. 
One of the main challenges in this research topic is to provide some efficient secondary constructions of bent functions and to possibly provide a  classification of these functions. The latter problem is evident from the work of Langevin and Leander \cite{Langevin}, showing that the number of bent functions in eight variables that belong to the main two primary classes is only a small fraction (about the size of $2^{76}$ up to adding affine terms)  of the totality of approximately $2^{106}$ bent functions in eight variables.
 The purpose of this article is to provide explicit design of bent functions outside $\cM^\#$ of very large cardinality. For instance, when $n=8$ we can explicitly define at least $2^{78}$ such functions, which is larger than the cardinality of all bent functions in $\cM^\#$, being  approximately $2^{77}$ (this recent estimation was obtained independently in~\cite{LangevinPolujan2024BFA} and~\cite{KolomeecMMFExtensions2025}, and aims to improve the original estimation of approximately $2^{81}$ given in~\cite{Langevin}).

 For the above mentioned purpose, we will consider the generalized Maiorana-McFarland ($\cGM_{n/2+k}$) class of Boolean functions in $n$ variables, with $n$ being even, which is a broader class of functions compared to the $\cM$ class  and  can be viewed  as a concatenation of affine functions in $n/2-k$ variables with $k>0$,  whereas typically  bent functions in  the $\cM$ class correspond to a concatenation of affine functions in $n/2$ variables.
 The class  $\mathcal{GMM}_{n/2+k}$  was introduced in \cite{Camion1991} and was initially used in the design of resilient functions  \cite{Camion1991,Carlet_Indirect,WGZhang2014}.
  More precisely, the members of $\mathcal{GMM}_{n/2+k}$  are defined as $f(x,y)=x\cdot\phi(y) + 
 h(y), $ for $(x,y)   \in \F_2^{\frac{n}{2}-k} \times \F_2^{\frac{n}{2}+k}$, where $\phi \colon  \F_2^{\frac{n}{2}+k} \rightarrow  \F_2^{\frac{n}{2}-k}$ and $h$ is a Boolean function (notice that $f$ is  an affine function in $n/2-k$ variables for any fixed $y$).

In this article, denoting $n=2m$, we provide a full characterization of bent functions of the form $f(x,y) =x \cdot \phi(y) + h(y)$, where $x \in \F_2^{m-1}$, $y \in \F_2^{m+1}$ and $m \geq 4$ (as otherwise these bent functions would belong to $\cM^\#$), that stem from the $\mathcal{GMM}_{\frac{n}{2}+1}$ class. We remark that necessary and sufficient  conditions for bentness of the functions in $\mathcal{GMM}_{\frac{n}{2}+k}$  were obtained in \cite[Theorem 6.40]{Logachev2012}, for all even $n \geq 4$ and  $1 \leq k \leq \frac{n}{2}-1$. However, the structure of the mapping $\phi$ and the associated preimages was not specified. When $k=1$, we show that the preimages of $\phi$ (which is necessarily 4-to-1) are necessarily 2-dimensional affine subspaces and for a fixed  $\phi$  the exact number of different functions $h$ so that $f(x,y)=x \cdot \phi(y) + h(y)$ is bent is given. Their dual bent functions are also specified though  a compact algebraic description is not easily obtained. It turns out that a particular class of non-trivial decompositions of the vector space $\F_2^{m+1}$ into 2-dimensional affine subspaces, referred to as proper decompositions (see Definition~\ref{def: properdec}), is of crucial importance, since any  partition 
that satisfies the condition given in Proposition~\ref{prop: trivialpartition}  only leads to bent functions in $\cM^\#$. Nonetheless, Example~\ref{ex:necessity} demonstrates that the condition in Proposition~\ref{prop: trivialpartition}, while sufficient, is not necessary for inclusion in~$\cM^\#$.
Using a simple algorithm for partitioning the vector space $\F_2^{5}$ (thus for $m=4$) into  2-dimensional affine subspaces we could confirm that out of  4960 such partitions 3785 of them are proper, i.e., do not satisfy the criterion in Proposition~\ref{prop: trivialpartition}. Moreover, it was verified that all the obtained proper partitions actually define bent functions outside $\cM^\#$, which then gives a total cardinality of at least $2^{78}$  bent  functions outside $\cM^\#$ in 8 variables (computing the orders of the automorphism groups of EA-inequivalent classes and applying the orbit-stabilizer result, as outlined in \cite[Theorem 2]{LangevinPolujan2024BFA}). 
In another direction, we also demonstrate that even when each bent function $f_i$, $i=1,2,3,4$, is outside $\cM^\#$, the concatenation $f=f_1||f_2||f_3||f_4$ can still result in a bent function $f \in \cM^\#$.
This result essentially answers an open problem recently posed  in \cite{ISIT-IEEE2024}. In this context, even a stronger statement regarding the inclusion property is valid, as given in Theorem \ref{th:outsidein}. More precisely, every function in the $\cGM_{m+1}$ class (even those outside $\cM^{\#}$) is a part of a function in the $\cM^{\#}$ class in a larger number of variables. On the other hand,  using a similar approach of defining $f=f_1||f_2||f_3||f_4$,  we show that selecting  suitable $f_i \in \cM^\#$ generates bent functions that are provably outside $\cM^\#$, see Theorem \ref{th:4concatoutsideMM}.

Although large families of bent functions outside $\cM^\#$ can be designed using proper partitions of $\F_2^{m+1}$  into 2-dimensional affine subspaces, an explicit algebraic specification of the mapping  $\phi \colon \F_2^{m+1} \to \F_2^{m-1}$ remains unclear. In this context, the extended \eqref{eq: P1} property, introduced in \cite{PPKZ2023} for permutations over $\F_2^m$ that satisfy $D_aD_b \phi \neq 0_m$, called \eqref{eq: P1ext} in this article, appears to be of similar importance for mappings $\phi \colon \F_2^{m+1} \to \F_2^{m-1}$. 
Even though the property of being outside $\cM^\#$ is not necessarily induced by the property \eqref{eq: P1ext}, when the class $\cGM_{m+k}$ is considered, for $k \geq 1$, it turns out that the property \eqref{eq: P1ext} of $\phi \colon \F_2^{m+k} \to \F_2^{m-k}$ is sufficient  to state the exclusion from $\cM^\#$, see Theorem~\ref{th:uniqueMsubspace}. We propose sufficient conditions for these $2^{2k}$-to-1 mappings $\phi$ to satisfy \eqref{eq: P1ext}, but  further investigations are needed (see Open problem \ref{op:overallconditions}) since $\phi$ additionally must partition $\F_2^{m+k}$ in  a non-trivial manner (see also Example \ref{ex:P1extnotbent}). 
Finally, we show the connection between the $\mathcal{GMM}_{\frac{n}{2}+1}$ class whose elements are $f(x,y)=x \cdot \phi(y) + h(y)$, where $x \in \F_2^{m-1}$, $y \in \F_2^{m+1}$,  and the strict Maiorana-McFarland class which contains bent functions of the form  $f(x,y)=x \cdot \pi(y) + h'(y)$, with $x, y \in \F_2^{m}$ and $\pi$ is a permutation of $\F_2^{m}$. 
 
 This paper is organized as follows. In Section \ref{sec:prel}, we recall some  basic facts concerning Boolean and in particular bent functions.
 %Two generic secondary methods of constructing bent functions without any initial conditions are given in Section \ref{sec:constr}.
 In Section \ref{sec:conc}, we define the  $\mathcal{GMM}_{\frac{n}{2}+k}$ class 
%(that properly contains the $\cM$ class, when $k=0$) 
and recall the conditions that ensure  bentness of $f \in \cGM_{\frac{n}{2}+k}$. Both necessary and sufficient conditions that a function $f(x,y)=x \cdot \phi(y) + h(y) \in \cGM_{\frac{n}{2}+1}$ is bent, along with a precise structure of $\phi$, are given in Section \ref{sec:bentness}. In this section, we also specify exactly the dual bent function $f^*$ of $f$ and determine the exact number of functions $h$ so that $f$ remains bent for a fixed mapping $\phi$. Moreover, we provide a solution to an open problem in \cite{ISIT-IEEE2024} by showing that $f=f_1||f_2||f_3||f_4$ may still belong to $\cM^\#$ even though $f_i \not \in \cM^\#$. In Section \ref{sec:inclusion}, we provide a sufficient condition for $\phi: \F_2^{m+1} \to \F_2^{m-1}$ and the corresponding partition of $\F_2^{m+1}$ so that $f(x,y)=x \cdot \phi(y)+h(y)$ is a bent function in $\cM^\#$. By not fulfilling this criterion, thus providing proper partitions of $\F_2^{m+1}$, we could specify a large set of bent functions, of cardinality at least $2^{78}$, that are outside $\cM^\#$. Furthermore, we also propose an explicit method of specifying $f=f_1||f_2||f_3||f_4 \not \in \cM^\#$, where $f_i \in \cM^\#$. In Section \ref{sec:uniqueMandP1}, we consider algebraic properties of $\phi$ (by introducing the extended \eqref{eq: P1ext} property) and relate them to the exclusion from $\cM^\#$. We provide certain sufficient conditions for  $\phi$ to satisfy \eqref{eq: P1ext}, but  further investigation is needed since $\phi$ additionally must partition $\F_2^{m+k}$ in  a proper manner to achieve bentness of $f(x,y)=x\cdot \phi(y) + h(y)$.
Some concluding remarks are given in Section \ref{sec:concl}.

\section{Preliminaries}\label{sec:prel}
 Let $\F_2$
  denote the binary field of characteristic two, and accordingly let $\F_{2^n}$ denote the Galois field of order $2^n$ whose corresponding vector space is $\F_2^n$ (once the basis of $\F_{2^n}$ over $\F_2$ is fixed).
 Any function from $\mathbb{F}_{2}^n$  to
$\mathbb{F}_2$ (alternatively from $\mathbb{F}_{2^n}$ to $\F_2$) is called an $n$-variable Boolean function, and the
set of all Boolean functions in $n$ variables is denoted by
$\cB_n $. A Boolean function  $f\colon\F_2^n \rightarrow \F_2$ is commonly represented as a multivariate polynomial $f(x_1, \ldots,
x_n)$ over $\F_2$ called the {\em algebraic normal form} (ANF) of $f$.
 More precisely, $f(x_1, \ldots, x_n)$ can be written as
\begin {eqnarray}
f(x_1,\dots,x_n)=\sum_{u\in \F_2^n}\lambda_u\left(\prod_{i=1}^n
x_i^{u_i}\right),
\end {eqnarray} for $\lambda_u\in \F_2,
u=(u_1,\ldots,u_n) \in \F_2^n$. We use $0_n$ to denote the all-zero vector in $\mathbb{F}^n_2$. For a function $f \in \mathcal{B}_n$ and $a \in \F_2^n$, we denote by $f^a $  the function in $ \mathcal{B}_n$ defined by
$f^a(x) = f(x+a)$, for all $x\in\F_2^n$.

The Hamming weight of $x=(x_1,\ldots,x_n)\in \mathbb{F}^n_2$,  denoted by $\operatorname{wt}(x)$, is the cardinality of the set $\{ i \in \{ 1, \ldots ,n \} \mid x_i=1 \}$.
%$\operatorname{wt}(x)=\sum^n_{i=1} x_i.$
The \emph{Walsh-Hadamard transform} of $f\in\mathcal{B}_n$, at any point $\omega\in\mathbb{F}^n_2$, is defined as an integer valued function $
W_{f}(\omega)=\sum_{x\in \mathbb{F}_2^n}(-1)^{f(x)+ \omega\cdot x}$ (the summation is taken over integers), where $``\cdot"$ denotes the standard inner (dot)
product of two vectors, that is, $\omega \cdot x=\omega_1x_1+ \cdots
+ \omega_nx_n$ (the sum is taken modulo two).
In general, we employ ``+'' instead of more precise $\oplus$ and its use should be clear from the context. Sometimes, to avoid confusion, we will anyway use  the correct symbol $\oplus$ for the binary addition.

A function $f\in\cB_n $, where $n$ is even, is called bent if
$W_f(\omega)=\pm 2^{\frac{n}{2}}$, for every $\omega \in \mathbb{F}_2^n$.  Furthermore, the \emph{dual function} of a bent function $f\in \mathcal{B}_n$, denoted by $f^*$,  is the Boolean function in $n$ variables defined by $W_f(w)=2^{n/2}(-1)^{f^*(w)}$,  which  is also a bent function.
 The \textit{first-order derivative} of a function $f\in\mathcal{B}_n$ in the direction $a \in \F_2^n$ is the mapping $D_{a}f(x)=f(x+a) +  f(x)$. An element $a \in \F_2^n \setminus \{0_n\}$ is called a (nonzero) {\em linear structure} of $f$ if $D_af(x) =c \in \F_2$, for all $x \in \F_2^n$. Derivatives of higher orders are defined recursively, i.e., the \emph{$k$-th-order derivative} of a function $f\in\mathcal{B}_n$ is defined by $D_Vf(x)=D_{a_k}D_{a_{k-1}}\ldots D_{a_1}f(x)=D_{a_k}(D_{a_{k-1}}\ldots D_{a_1}f)(x)$, where $V=\langle a_1,\ldots,a_k \rangle$ is a vector subspace of $\F_2^n$ spanned by vectors  $a_1,\ldots,a_k\in\F_2^n$ that form a basis of $V$. Throughout the paper, we use the notation $\langle S, a\rangle$ to denote the linear span of the vectors from the set $S \cup \{a\}$, where $S \subset \F_2^n$ and $a \in \F_2^n$.

    The (strict) \textit{Maiorana-McFarland class} $\cM$ is the set of $n$-variable ($n=2m$) Boolean bent functions of the form
 \[
 f(x,y)=x \cdot \pi(y)+ h(y), \mbox{ for all } x, y\in\F_2^m,
 \]
 where $\pi$ is a permutation on $\F_2^m$, and $h$ is an arbitrary Boolean function on
 $\F_2^m$. We recall that the {\em completed} class is obtained by applying the so-called extended affine equivalence to the functions in a given class. More precisely, if we consider the class $\mathcal{M}$,  given an arbitrary  $f \in \mathcal{M}$ defined on $\F_2^n$, this affine equivalence class includes a set of functions $\{g\}$ obtained by
 \begin{equation}\label{eq:extendedAE}
 g(x)=f(xA+b) + c \cdot x + d,
 \end{equation}
 where $A \in GL(n,\F_2)$ (the group of invertible $n\times n$ matrices over $\F_2$ under multiplication), $b,c \in \F_2^n$ and $d \in \F_2$.
 Thus, the  completed class  $\mathcal{M}^\#$ can be defined as 
 
 \begin{equation} \label{eq:completed}
 \mathcal{M}^\#=\{f(xA+b) + c \cdot x + d: f \in \mathcal{M}, A \in GL(n,\F_2), b,c \in \F_2^n,  d \in \F_2\}.
 \end{equation}
 Using Dillon's criterion, (stated as Lemma~\ref{lem M-M second} below; for a proof, see~\cite{Carlet2021} or \cite[p. 19]{Polujan2020}), one can show that a given Boolean bent function $f\in\mathcal{B}_n$ does (not) belong to the completed Maiorana-McFarland class $\mathcal{M}^\#$.
 \begin{lemma} \cite[p. 102]{Dillon1974}\label{lem M-M second}
    Let $n=2m$. A Boolean bent function $f\in\mathcal{B}_n$ belongs to $\cM^{\#}$ if and only if
    there exists an $m$-dimensional vector subspace $U$ of $\F_2^n$ such that the second-order derivatives
    $ D_{a}D_{b}f(x)=f(x) +  f(x +  a) +  f(x +  b) +  f(x +  a +  b)$
    vanish for any $ a,  b \in U$.
 \end{lemma}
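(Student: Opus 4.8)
The plan is to prove the two implications separately, the organizing principle being that the second-order-derivative condition is invariant under the extended-affine maps of \eqref{eq:extendedAE}. This invariance is what lets me reduce, in both directions, to the canonical Maiorana-McFarland coordinates. Before starting either direction I would record two transformation rules. First, adding an affine term $c \cdot x + d$ leaves every $D_{a}D_{b}f$ unchanged, since affine functions have vanishing second-order derivatives. Second, under a linear substitution $z \mapsto zA$ with $A \in GL(n,\F_2)$, a one-line derivative computation gives $D_{a}D_{b}\bigl(f(zA)\bigr) = (D_{aA}D_{bA}f)(zA)$. Together these show that the existence of an $m$-dimensional subspace $U$ on which all second-order derivatives vanish is an EA-invariant property, with $U$ simply being carried to $UA^{-1}$ (still $m$-dimensional, since $A$ is invertible).

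For the forward direction, assuming $f \in \cM^\#$, it then suffices by the invariance to verify the condition for a strict representative $f(x,y) = x \cdot \pi(y) + h(y)$ with $x,y \in \F_2^m$. Here I would take $U = \F_2^m \times \{0_m\}$ and compute, for $a = (a_1,0_m)$ and $b = (b_1,0_m)$ in $U$, that $D_{a}f(x,y) = a_1 \cdot \pi(y)$ is independent of $x$, whence $D_{b}D_{a}f = 0_n$. This settles the implication.

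For the converse I would assume $f$ is bent and that such a $U$ exists. Applying a linear change of variables (an EA transformation, under which bentness, membership in $\cM^\#$, and—by the rule above—the derivative condition are all respected), I may assume $U = \F_2^m \times \{0_m\}$ and write $z = (x,y)$. The hypothesis $D_{a}D_{b}f = 0_n$ for $a,b \in U$ says exactly that, for each fixed $y$, the restriction $x \mapsto f(x,y)$ has vanishing second-order derivatives, hence is affine; so $f$ takes the generalized Maiorana-McFarland form $f(x,y) = x \cdot \phi(y) + h(y)$ for a unique $\phi \colon \F_2^m \to \F_2^m$ and $h(y) = f(0_m,y)$. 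A short Walsh computation then gives $W_f(a,b) = 2^m \sum_{y \in \phi^{-1}(a)} (-1)^{h(y) + b \cdot y}$. If some fiber $\phi^{-1}(a)$ were empty, then $W_f(a,b) = 0$ for all $b$, contradicting bentness; hence $\phi$ is surjective, and as a self-map of $\F_2^m$ it is a permutation. Thus $f$ is a strict Maiorana-McFarland function, and undoing the change of variables places the original $f$ in $\cM^\#$.

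The two transformation computations are routine. The conceptual crux—and the step I would treat most carefully—is the converse: one must first convert the purely derivative-theoretic hypothesis into the normal form $x \cdot \phi(y) + h(y)$, and then genuinely use bentness, through the fiber structure of $\phi$ visible in the Walsh spectrum, to upgrade $\phi$ from an arbitrary map into a permutation. It is precisely this last upgrade that distinguishes membership in $\cM$ from the larger, degenerate family of functions that are merely affine on the cosets of $U$, and I expect it to be where the hypothesis of bentness is indispensable.
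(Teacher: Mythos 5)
Your proof is correct and complete in both directions: the EA-invariance of the vanishing second-order-derivative condition, the canonical subspace $\F_2^m\times\{0_m\}$ for the forward implication, and, for the converse, the reduction to the normal form $x\cdot\phi(y)+h(y)$ followed by the Walsh-fiber argument showing that bentness forces $\phi$ to be surjective, hence a permutation. Note that the paper itself gives no proof of this lemma (it cites \cite{Dillon1974}, with proofs referenced in \cite{Carlet2021} and \cite{Polujan2020}), and your argument is essentially the standard one found in those references.
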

% { \color{blue}
	
	Following the terminology of~\cite{Polujan2020}, we introduce the concept of $\mathcal{M}$-subspaces for Boolean functions (not necessarily bent) as follows.  
	
	\begin{defi}  
		Let $f \in \mathcal{B}_n$ be a Boolean function. A vector subspace $V \subseteq \F_2^n$ is called an $\mathcal{M}$-subspace of $f$ if the second-order derivative satisfies  
		\[
		D_{a}D_{b}f = 0, \quad \text{for all } a, b \in V.
		\]  
	%	We denote by $\mathcal{MS}_r(f)$ the collection of all $r$-dimensional $\mathcal{M}$-subspaces of $f$.  
	\end{defi}  
	Throughout the article, for shortness, we use the notation $D_aD_bf$ instead of $D_aD_bf(x)$.
	It is well known~\cite{Carlet2021} that for a bent function $f \in \mathcal{B}_n$, the maximum possible dimension of an $\mathcal{M}$-subspace is $n/2$. Bent functions attaining this bound are precisely those belonging to the class $\mathcal{M}^\#$, as established by Lemma~\ref{lem M-M second}.  For any Maiorana-McFarland bent function of the form  
	\[
	f(x,y) = x \cdot \pi(y) + h(y),
	\]  
	where $(x,y) \in \F_2^m \times \F_2^m$, the vector space $\F_2^m \times \{0_m\}$ is an $\mathcal{M}$-subspace of $f$, as noted by Dillon~\cite{Dillon1974}. However, in general, this vector space — referred to as the \textit{canonical} $\mathcal{M}$-subspace — is not necessarily the unique $\mathcal{M}$-subspace of dimension $n/2$ for a Maiorana-McFarland bent function $f\in\mathcal{B}_n$.
	
	We will use the notation $f=f_1||f_2||f_3||f_4$ to denote the canonical concatenation of $f_i \in \cB_n$ for specifying the function $f \in \cB_{n+2}$. This precisely means that the restrictions of $f(x,x_{n+1}, x_{n+2})$, where $x \in \F_2^n$ and $x_{n+1}, x_{n+2} \in \F_2$, are given  as: $f(x,0,0)=f_1(x)$,  
	$f(x,0,1)=f_2(x)$, $f(x,1,0)=f_3(x)$, and $f(x,1,1)=f_4(x)$.

\subsection{Bent functions in  $\mathcal{GMM}_{\frac{n}{2}+k}$}\label{sec:conc}

For the purpose of this article, we define the (strict) generalized Maiorana-McFarland class ($\mathcal {GMM}_{\frac{n}{2}+k}$) for even $n=2m$ as below. 

\begin{defi}\label{def:GMMclass} %Let $n$ be a positive even integer.
 Let  $n=2m$ be an even positive integer and  $0\leq k\leq n/2-1.$ The set of all Boolean functions $f \colon \F_2^{\frac{n}{2}-k} \times \F_2^{\frac{n}{2}+k} \rightarrow \F_2$ of the form
\begin{eqnarray}\label{eq:Mextend}
f(x,y)=x\cdot\phi(y) +
h(y), \;\; x \in \F_2^{\frac{n}{2}-k}, y \in \F_2^{\frac{n}{2}+k},
\end{eqnarray}
is called the strict $\mathcal{GMM}_{\frac{n}{2}+k}$ class, where $\phi
 \colon \F_2^{\frac{n}{2}+k} \rightarrow \F_2^{\frac{n}{2}-k}$  and  $h\in
\cB_{\frac{n}{2}+k}$, for $k=0,1,\ldots,n/2-1.$

\end{defi}
\begin{rem}\label{rem:GMMcases}
When   $k=0$, the class defined by Eq. \eqref{eq:Mextend} corresponds to the $\mathcal{\cM}$
    class of bent functions if and only if $\phi$ permutes $\F_2^{\frac{n}{2}}$. In the extreme case, when $k=n/2-1$,
    %(or $k=m-1$ for odd $n=2m+1$)
    the class defined by Eq. \eqref{eq:Mextend} 
   % (or by \eqref{eq:Mextendnodd}) 
    covers all Boolean functions on $\F_2^{n}$. Notice also that $f \colon \F_2^{\frac{n}{2}+k} \times \F_2^{\frac{n}{2}-k} \rightarrow \F_2$, for $k>0$, is not interesting to us since no bent function can be defined in this case. 

\end{rem}

In agreement with Remark~\ref{rem:GMMcases}, it was deduced \cite{Carlet2021,Polujan2020,DCC2022} that if a Boolean function $f \in \cB_n$ belongs to  $\mathcal{GMM}_{\frac{n}{2}+k}$,
then $f$ belongs to $\mathcal{GMM}_{\frac{n}{2}+k+1}$. Thereby,
if $k_1<k_2$ and $f$ belongs to  $\mathcal{GMM}_{\frac{n}{2}+k_1}$ then $f$ belongs to  $\mathcal{GMM}_{\frac{n}{2}+k_2}$. 
This result implies that the standard $\cM$ class of bent functions, i.e., $\mathcal{GMM}_{\frac{n}{2}}$, is embedded in $\mathcal{GMM}_{\frac{n}{2}+k}$ for any $k>0$.
A sufficient condition that $f \in \mathcal{GMM}_{\frac{n}{2}+k}$ is a bent function  was provided in~\cite{DCC2022}, which was also shown to be necessary in \cite[Theorem 6.40]{Logachev2012} (using somewhat different terminology).

\begin{prop}\label{prop:suffic_g_k}\cite{DCC2022}
Let $n$ be even and  $f\in \mathcal{GMM}_
{\frac{n}{2}+k}$ be defined by Eq. (\ref{eq:Mextend}). If $\phi  \colon \F_2^{\frac{n}{2}+k} \rightarrow
\F_2^{\frac{n}{2}-k}$ is $2^{2k}$-to-1 mapping and
for any $\alpha \in \mathbb{F}_2^{\frac{n}{2}-k}$, $\beta\in \mathbb{F}_2^{\frac{n}{2}+k}$,
\begin{equation}\label{eq:bentcond}
\Big | \sum_{y\in \mathbb{F}_2^{\frac{n}{2}+k}: \ {\phi}(y)=\alpha} (-1)^{h(y) + \beta \cdot y}\Big|=2^k,
\end{equation}
then $f$ is a bent function.
\end{prop}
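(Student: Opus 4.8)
The plan is to prove bentness directly from the definition of the Walsh--Hadamard transform, evaluating $W_f$ at an arbitrary spectral point and showing that its absolute value equals $2^{n/2}$ everywhere. Since the ambient space splits as $\F_2^{\frac{n}{2}-k}\times\F_2^{\frac{n}{2}+k}$, I would write the spectral point accordingly as $(\alpha,\beta)$ with $\alpha\in\F_2^{\frac{n}{2}-k}$ and $\beta\in\F_2^{\frac{n}{2}+k}$, so that it lines up exactly with the two arguments appearing in the hypothesis \eqref{eq:bentcond}.

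First I would substitute $f(x,y)=x\cdot\phi(y)+h(y)$ into
\[
W_f(\alpha,\beta)=\sum_{x\in\F_2^{\frac{n}{2}-k}}\sum_{y\in\F_2^{\frac{n}{2}+k}}(-1)^{f(x,y)+\alpha\cdot x+\beta\cdot y},
\]
and, observing that $x$ enters only through the affine form $x\cdot\phi(y)+\alpha\cdot x$, factor the double sum by pulling the $y$-dependent part outside:
\[
W_f(\alpha,\beta)=\sum_{y\in\F_2^{\frac{n}{2}+k}}(-1)^{h(y)+\beta\cdot y}\ \sum_{x\in\F_2^{\frac{n}{2}-k}}(-1)^{x\cdot(\phi(y)+\alpha)}.
\]
The key step is the evaluation of the inner character sum: it equals $2^{\frac{n}{2}-k}$ when $\phi(y)=\alpha$ and vanishes otherwise. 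This collapses the outer summation onto the fiber $\phi^{-1}(\alpha)$ and yields
\[
W_f(\alpha,\beta)=2^{\frac{n}{2}-k}\sum_{y\,:\,\phi(y)=\alpha}(-1)^{h(y)+\beta\cdot y}.
\]

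Now I would invoke the hypothesis \eqref{eq:bentcond} with this very pair $\alpha,\beta$, which asserts that the remaining signed fiber sum has absolute value exactly $2^k$. Hence $|W_f(\alpha,\beta)|=2^{\frac{n}{2}-k}\cdot 2^k=2^{n/2}$ for every $(\alpha,\beta)$, which is precisely the defining property of a bent function, so $f$ is bent.

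There is no serious obstacle here: the argument is a direct spectral computation, and essentially all of its content sits in the inner character-sum identity and in the fact that the two powers of two combine to $2^{n/2}$. The only points requiring care are the bookkeeping of the dimensions $\frac{n}{2}-k$ and $\frac{n}{2}+k$, and the recognition that condition \eqref{eq:bentcond} is the hypothesis actually driving the estimate, while the $2^{2k}$-to-$1$ property of $\phi$ guarantees that every fiber $\phi^{-1}(\alpha)$ is nonempty (indeed of size $2^{2k}$), so that the prescribed magnitude $2^k$ is attainable and the two hypotheses are mutually consistent.
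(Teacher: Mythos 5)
Your proof is correct and takes essentially the same approach as the paper: the proposition is only cited from \cite{DCC2022}, but the identical Walsh-transform factorization (the inner character sum over $x$ collapsing onto the fiber $\phi^{-1}(a)$, giving $W_f(a,b)=2^{\frac{n}{2}-k}\sum_{y\in\phi^{-1}(a)}(-1)^{h(y)+b\cdot y}$) is exactly how the paper computes the spectrum in its proof of Theorem~\ref{theo: GMM+1condition} for the case $k=1$, after which condition \eqref{eq:bentcond} immediately yields $|W_f|=2^{n/2}$. Your closing remark is also accurate: the estimate is driven entirely by \eqref{eq:bentcond}, with the $2^{2k}$-to-$1$ hypothesis serving only to make that condition attainable on every (necessarily nonempty) fiber.
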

 
We notice  that in contrast to the standard $\mathcal{M}$ class of bent functions where the function $h$  on $\F_2^{n/2}$ is arbitrary, the bent condition given by Eq.~\eqref{eq:bentcond} implies certain restrictions on $h\in \cB_{n/2+k}$.

 Similarly to the class $\cM$, the completed version of $\cGM_{n/2+k}$ is denoted by $\cGM_{n/2+k}^\#$  and is obtained by applying the same transformation as in Eq.~\eqref{eq:completed}.  The following characterization of  $\cGM_{n/2+k}^\#$  (adopted for even $n$), stated as Proposition~54 in~\cite{Carlet2021}, can be used to show the exclusion from $\cM^\#$.
    \begin{prop}\cite{Carlet2021}\label{prop: genclasschar}
        An $n$-variable Boolean function $f$, with $n$ even, belongs to the completed $\cGM_{n/2+k}^\#$ class
    if and only if there exists an $(n/2-k)$-dimensional vector space $E$ such that $D_aD_bf$ is the null
    function for every $a, b \in E$.
    \end{prop}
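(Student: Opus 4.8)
The plan is to reduce the statement to Dillon-type reasoning, exactly parallel to Lemma~\ref{lem M-M second}: the existence of an $(n/2-k)$-dimensional $\mathcal{M}$-subspace is an extended-affine invariant, and a function possessing the \emph{canonical} such subspace is, up to an affine summand, precisely of the $\cGM_{n/2+k}$ form. The whole argument then rests on one transformation law for second-order derivatives plus one normal-form observation, and nowhere uses bentness.

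First I would record how second-order derivatives behave under the maps defining the completed class in Eq.~\eqref{eq:completed}. Writing $g(x)=f(xA+b)+c\cdot x+d$ and $L(x)=xA+b$, a direct computation (applying $D_a(F\circ L)=(D_{aA}F)\circ L$ twice, and using that every affine map has vanishing second-order derivative) yields
\[
D_aD_b\,g \;=\; \big(D_{aA}D_{bA}f\big)\circ L .
\]
Since $A\in GL(n,\F_2)$ and $L$ is a bijection, this shows that $g$ has $\mathcal{M}$-subspace $E$ (that is, $D_aD_bg=0$ for all $a,b\in E$) if and only if $f$ has $\mathcal{M}$-subspace $EA$, of the same dimension. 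Thus possessing an $(n/2-k)$-dimensional $\mathcal{M}$-subspace is preserved by the transformations generating $\cGM_{n/2+k}^\#$, and the affine summand $c\cdot x+d$ is simply absorbed because its second-order derivatives are null.

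Next I would treat the two implications. For the forward direction, take $f\in\cGM_{n/2+k}^\#$, so $f=g\circ L+\ell$ with $g(x,y)=x\cdot\phi(y)+h(y)$ as in Definition~\ref{def:GMMclass}. The canonical subspace $E_0=\F_2^{n/2-k}\times\{0_{n/2+k}\}$ is an $\mathcal{M}$-subspace of $g$: for $a=(a',0)$ one computes $D_ag(x,y)=a'\cdot\phi(y)$, which is independent of $x$, so $D_bD_ag=0$ for every $b\in E_0$. By the transformation law, $E=E_0A$ is then an $(n/2-k)$-dimensional $\mathcal{M}$-subspace of $f$. For the converse, suppose $f$ admits an $(n/2-k)$-dimensional $\mathcal{M}$-subspace $E$. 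Choose $A\in GL(n,\F_2)$ with $E_0A=E$ and set $g(x)=f(xA)$; by the transformation law $g$ then has $E_0$ as an $\mathcal{M}$-subspace. The decisive step is the normal-form observation: $D_aD_bg=0$ for all $a,b\in E_0$ says precisely that $x\mapsto g(x,y)$ is affine for every fixed $y$, whence $g(x,y)=x\cdot\phi(y)+h(y)$ with $\phi(y)$ the linear part and $h(y)=g(0,y)$ the constant part. Hence $g\in\cGM_{n/2+k}$ and $f(x)=g(xA^{-1})\in\cGM_{n/2+k}^\#$.

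I expect the only genuinely delicate point to be the converse's normal-form step — recognizing that the vanishing of \emph{all} second-order derivatives along the directions spanning $E_0$ is equivalent to affineness of $x\mapsto g(x,y)$ for each fixed $y$, so that the coefficient vector and constant term define $\phi$ and $h$ — together with the bookkeeping of which subspace ($E$ versus $E_0A$) is meant and in which direction the linear map acts. Everything else is formal, and since no bentness is invoked the characterization applies to arbitrary $f\in\cB_n$, matching the dimension bound $n/2-k$ against the $n/2$ bound of Dillon's criterion recovered at $k=0$.
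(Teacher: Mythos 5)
Your proof is correct, and since the paper offers no proof of this proposition (it is quoted as Proposition~54 of Carlet's book \cite{Carlet2021}), the right comparison is with the standard argument behind Dillon's criterion (Lemma~\ref{lem M-M second}) — which is exactly what you give: EA-invariance of $\mathcal{M}$-subspaces via the transformation law $D_aD_b\,g=(D_{aA}D_{bA}f)\circ L$, plus the normal-form observation that vanishing of all second-order derivatives along the canonical subspace $\F_2^{n/2-k}\times\{0_{n/2+k}\}$ is equivalent to $x\mapsto g(x,y)$ being affine for each fixed $y$, so that $g(x,y)=x\cdot\phi(y)+h(y)$. The only blemish is the bookkeeping you yourself flag: with your convention $g(x)=f(xA+b)+c\cdot x+d$, the forward direction yields the $\mathcal{M}$-subspace $E_0A^{-1}$ of $f$ rather than $E_0A$, which is immaterial since $A$ ranges over all of $GL(n,\F_2)$.
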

Notice that since $\cGM_{n/2+k_1}^\#$ is embedded in $\cGM_{n/2+k_2}^\#$ whenever $k_2>k_1$, we cannot use this criterion directly to exclude that 
a bent function $f \not \in  \cGM_{n/2}^\#$ without proving that $f \in \cGM_{n/2+k_1}^\# \setminus \cM^\#$, for some $k_1>0$.

\section{Boolean functions in the $\mathcal{GMM}_{\frac{n}{2}+1}$ class that are bent}\label{sec:bentness}
In this section, we will first give both necessary and sufficient conditions that a function $f(x,y)=x \cdot \phi(y) + h(y) \in \cGM_{n/2+1}$ is bent, see Theorem \ref{theo: GMM+1condition}. This is a refinement of the result in \cite{Logachev2012}, since we prove that the preimage sets of $\phi \colon \F_2^{n/2+1} \to \F_2^{n/2-1}$ are necessarily 2-dimensional affine subspaces. Moreover, we also describe the exact form of the dual bent function $f^*$ whenever $f$ is bent. For a fixed 4-to-1 mapping $\phi$ that partitions $\F_2^{n/2+1}$ into 2-dimensional affine subspaces, we will provide the exact number of different functions $h$ so that $f(x,y)=x \cdot \phi(y) + h(y)$ is bent. Moreover, we provide a solution to an open problem stated recently in \cite{ISIT-IEEE2024} and demonstrate that it is possible to specify an $(n+2)$-variable  bent function $f=f_1||f_2||f_3||f_4 \in \cM^\#$ even though the $n$-variable bent functions $f_i$ are outside $\cM^\#$. We also show that every 4-to-1 mapping  $\phi\colon\F_2^{m+1}\to\F_2^{m-1}$ satisfying the conditions of Theorem~\ref{theo: GMM+1condition} is extendable to a permutation of $\F_2^{m+1}$. 
\subsection{Full characterization of bent functions in the $\mathcal{GMM}_{\frac{n}{2}+1}$ class}\label{sec:GMM+1}
In this section, we analyze the properties of the $\mathcal{GMM}_{\frac{n}{2}+1}$ class, which is structurally closest to the standard $\cM$ class, as its elements can be viewed 
as concatenations of affine functions in $(n/2-1)=m-1$ variables. We provide both necessary and sufficient conditions under which the function $f(x,y)=x\cdot\phi(y) + h(y)$  in $\cGM_{m+1}$ is bent. Moreover, for a fixed choice of the mapping $\phi$, we determine the exact number of possible functions $h$ for which $f$ is bent, and we explicitly determine the dual bent function of $f$.

\begin{theo} \label{theo: GMM+1condition}
Let $n=2m$ and let $f$ be a function in the $\cGM_{m+1}$ class defined by 
\begin{eqnarray}\label{eq:GMM+1}
f(x,y)=x\cdot\phi(y) + 
h(y), \;\; x \in \F_2^{m-1}, y \in \F_2^{m+1},
\end{eqnarray}
where $\phi
 \colon \F_2^{m+1} \rightarrow \F_2^{m-1}$  and  $h \colon \F_2^{m+1} \rightarrow \F_2$. Then, $f$ is a bent function if and only if
\begin{itemize}
\item the collection $\left\lbrace \phi^{-1}(a) \mid a \in \F_2^{m-1} \right\rbrace$ is a partition of $\F_2^{m+1}$ into $2$-dimensional affine subspaces (where by $\phi^{-1}(a)$ we denote the set $\{ y \in \F_2^{m+1}  \mid \phi(y)=a \}$), and
\item for every $a\in\F_2^{m-1}$, the restriction of $h$ on the set $\phi^{-1}(a)$ has odd weight.
\end{itemize}
\end{theo}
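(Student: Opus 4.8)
The plan is to compute the Walsh--Hadamard transform of $f$ directly and reduce bentness to a collection of independent local conditions, one per preimage set of $\phi$. Writing a Walsh point as $\omega=(u,v)$ with $u\in\F_2^{m-1}$ and $v\in\F_2^{m+1}$ and summing first over $x$,
\[
W_f(u,v)=\sum_{y\in\F_2^{m+1}}(-1)^{h(y)+v\cdot y}\sum_{x\in\F_2^{m-1}}(-1)^{x\cdot(\phi(y)+u)}=2^{m-1}\,T_u(v),
\]
where $T_u(v):=\sum_{y\in\phi^{-1}(u)}(-1)^{h(y)+v\cdot y}$, since the inner sum vanishes unless $\phi(y)=u$. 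Hence $f$ is bent, i.e. $|W_f(u,v)|=2^m$ for all $(u,v)$, if and only if $|T_u(v)|=2$ for every $u\in\F_2^{m-1}$ and $v\in\F_2^{m+1}$. Everything then reduces to deciding, for one fixed $u$, when the signed character sum over the block $S:=\phi^{-1}(u)$ has constant modulus $2$.

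First I would fix the block sizes. Viewing $\psi(y):=(-1)^{h(y)}\mathbf{1}_{S}(y)$ as a function on $\F_2^{m+1}$, the quantity $T_u(v)$ is precisely its Walsh transform at $v$, so Parseval's identity gives $\sum_v|T_u(v)|^2=2^{m+1}|S|$. If $|T_u(v)|=2$ for all $2^{m+1}$ choices of $v$, the left side equals $2^{m+3}$, forcing $|S|=4$; in particular no preimage is empty, so $\phi$ is $4$-to-$1$ and the family $\{\phi^{-1}(u)\}$ partitions $\F_2^{m+1}$ into $2^{m-1}$ blocks of size $4$.

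The crux is the local analysis of a fixed $4$-element block $S=\{s_1,s_2,s_3,s_4\}$ with signs $\epsilon_i:=(-1)^{h(s_i)}$. Expanding $|T_u(v)|^2=\sum_{i,j}\epsilon_i\epsilon_j(-1)^{v\cdot(s_i+s_j)}$ and separating the diagonal, the requirement $|T_u(v)|^2=4$ for all $v$ becomes $\sum_{\{i,j\}}\epsilon_i\epsilon_j(-1)^{v\cdot d_{ij}}=0$ for all $v$, where the sum is over the six unordered pairs and $d_{ij}:=s_i+s_j$. The engine of the argument is that the characters $v\mapsto(-1)^{v\cdot d}$ are linearly independent for distinct $d$. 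I would split on $\sigma:=s_1+s_2+s_3+s_4$: two distinct differences $d_{ij},d_{kl}$ can coincide only when $\{i,j\},\{k,l\}$ are disjoint, and then precisely when $\sigma=0$; hence if $\sigma\neq0$ all six $d_{ij}$ are pairwise distinct, and the vanishing condition would force every coefficient $\epsilon_i\epsilon_j=0$, which is impossible. Thus necessarily $\sigma=0$, equivalently $S=s_1+\langle s_1{+}s_2,\,s_1{+}s_3\rangle$ with $s_1{+}s_2,\,s_1{+}s_3$ independent (guaranteed by the distinctness of the $s_i$) --- that is, $S$ is a $2$-dimensional affine subspace.

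Proving that $\sigma\neq0$ cannot occur is the main obstacle, but the same independence of characters then also delivers the weight condition. When $\sigma=0$ the six differences collapse onto the three nonzero elements $d_1=d_{12}=d_{34}$, $d_2=d_{13}=d_{24}$, $d_3=d_{14}=d_{23}$ of the direction space, and independence of the corresponding three characters reduces the vanishing condition to $\epsilon_1\epsilon_2+\epsilon_3\epsilon_4=\epsilon_1\epsilon_3+\epsilon_2\epsilon_4=\epsilon_1\epsilon_4+\epsilon_2\epsilon_3=0$. Since each $\epsilon_i\epsilon_j=\pm1$, every one of these equalities is equivalent to the single identity $\epsilon_1\epsilon_2\epsilon_3\epsilon_4=-1$, i.e. $(-1)^{\operatorname{wt}(h|_S)}=-1$, which says precisely that $h$ has odd weight on $S$. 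Running this equivalence over all $u$ gives both directions of the theorem; alternatively, the sufficiency can be read off from Proposition~\ref{prop:suffic_g_k} with $k=1$, as an odd-weight function on a $2$-dimensional space is bent and hence has all Walsh values equal to $\pm2$.
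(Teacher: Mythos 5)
Your proposal is correct, and while it shares the paper's opening move --- summing over $x$ first to factor $W_f(a,b)=2^{m-1}\sum_{y\in\phi^{-1}(a)}(-1)^{h(y)+b\cdot y}$ and thereby localize bentness to each preimage --- it resolves the two key points by genuinely different means. First, the paper does not prove that $\phi$ is $4$-to-$1$: it imports this from \cite[Theorem 6.40]{Logachev2012}, whereas you derive $|\phi^{-1}(a)|=4$ from Parseval's identity applied to $(-1)^{h}\mathbf{1}_{\phi^{-1}(a)}$, which makes the proof self-contained. Second, for the local analysis the paper avoids squaring altogether: it writes the sum of four signs as $4-2\sum_{i=1}^{4}\bigl(h(y_i)\oplus y_i\cdot b\bigr)$, so the value is $\pm 2$ exactly when $\bigoplus_{i=1}^4\bigl(h(y_i)\oplus y_i\cdot b\bigr)=1$ for all $b$; evaluating at $b=0_{m+1}$ gives the odd-weight condition, and the remaining linear-in-$b$ part forces $\bigoplus_{i=1}^4 y_i=0_{m+1}$, i.e.\ the affine-subspace condition --- both conditions fall out of a single Boolean identity with no case distinction. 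Your route instead squares the local sum and invokes orthogonality of the characters $v\mapsto(-1)^{v\cdot d}$, splitting on whether $\sigma=s_1+s_2+s_3+s_4$ vanishes: for $\sigma\neq 0$ the six pairwise-distinct differences force all coefficients $\epsilon_i\epsilon_j$ to vanish (impossible), and for $\sigma=0$ the collapse onto three characters yields $\epsilon_1\epsilon_2\epsilon_3\epsilon_4=-1$, which is exactly the odd-weight condition. Your version is longer, but it buys independence from the external reference and a second-moment technique (Parseval plus character independence) that extends naturally to fibers of size $2^{2k}$ in $\cGM_{m+k}$ for $k>1$, where the paper's four-term parity identity has no direct analogue; the paper's argument, in exchange, is shorter and entirely elementary. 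Both directions of the equivalence are correctly accounted for in your chain of ``iff'' reductions, and your closing remark that sufficiency alternatively follows from Proposition~\ref{prop:suffic_g_k} with $k=1$ is also sound, since the odd-weight functions on $\F_2^2$ are precisely the $2$-variable bent functions.
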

\begin{proof}
According to~\cite[Theorem 6.40]{Logachev2012}, the mapping $\phi$ is $4$-to-$1$. For $a \in \F_2^{m-1}$ and $b \in \F_2^{m+1}$, we compute
\begin{align*}
W_f(a,b)&=
\sum_{(x,y) \in \F_2^{m-1}\times \F_2^{m+1}}
\left( -1 \right) ^{x\cdot\phi(y) +
h(y) + (x,y)\cdot (a,b)} \\
&=\sum_{y\in \F_2^{m+1}}\left( -1 \right) ^{
h(y) + y\cdot b} \sum_{x\in \F_2^{m-1}} \left( -1 \right) ^{x\cdot(a + \phi(y))} \\
&=2^{m-1} \sum_{y\in \phi^{-1}(a)}\left( -1 \right) ^{
h(y) + y\cdot b}.
\end{align*}
Hence, $f$ is bent if and only if
$
\sum_{y\in \phi^{-1}(a)}\left( -1 \right) ^{
h(y) + y\cdot b}=\pm 2,
$
for all $a \in \F_2^{m-1}$ and $b \in \F_2^{m+1}$. Let $\phi^{-1}(a)= \{ y_1,y_2,y_3,y_4 \}$. To avoid confusion, in the rest of the proof we use $\oplus$ to denote addition modulo two, whereas $\sum$ denotes summation over integers.
Then,
\begin{align*}
\sum_{y\in \phi^{-1}(a)}\left( -1 \right) ^{
h(y) \oplus y\cdot b} &= (-1)^{
h(y_1) \oplus y_1 \cdot b}+(-1)^{
h(y_2) \oplus y_2 \cdot b}+(-1)^{
h(y_3) \oplus y_4 \cdot b}+(-1)^{
h(y_4) \oplus y_4 \cdot b} \\
&= 4-2 \sum_{i=1}^4 (h(y_i) \oplus y_i \cdot b).
\end{align*}
Hence, $
\sum_{y\in \phi^{-1}(a)}\left( -1 \right) ^{
h(y) \oplus y\cdot b}=\pm 2,
$ if and only if $\sum_{i=1}^4 (h(y_i) \oplus y_i \cdot b)$ is odd,  
%{\color{blue} (not taken modulo two)},
that is, the Boolean function $\bigoplus_{i=1}^4 (h(y_i) \oplus y_i \cdot b)$ is equal to $1$, for all $b \in \F_2^{m+1}$.  Setting $b=0_{m+1}$, we get that $\sum_{i=1}^4 h(y_i)$ is odd, i.e., the restriction of $h$ on the set $\phi^{-1}(a)$ has odd weight. Consequently, because $\bigoplus_{i=1}^4 h(y_i) \oplus \bigoplus_{i=1}^4 y_i \cdot b=1$, for all $b \in \F_2^{m+1}$, we deduce that $(\bigoplus_{i=1}^4 y_i) \cdot b=0$, for all $b \in \F_2^{m+1}$, so $\bigoplus_{i=1}^4 y_i=0_{m+1}$. A set of $4$ vectors $\{ y_1,y_2,y_3,y_4 \} \subset \F_2^{m+1}$ is an affine subspace of $\F_2^{m+1}$ if and only if $\bigoplus_{i=1}^4 y_i=0_{m+1}$, and this concludes the proof. 
%{\bf Enes: We may try to gemeralize Proposition }
\end{proof}
\begin{rem}
	It is an interesting problem to specify similar conditions regarding the decomposition of $\F_2^{m+k}$, for $k>1$, into $2k$-dimensional affine subspaces 
	as in Theorem  \ref{theo: GMM+1condition}. In particular, using the fact that $\F_2^{m+k} =\F_2^{k-1}\times  \F_2^{m+1}$ and that $\cGM_{m+1} \subset \cGM_{m+k}$, there is a possibility of identifying the class $\cGM_{m+1}$ within $\cGM_{m+k}$. This aspect is important when specifying bent functions in  $\cGM_{m+k}$ so that they do not intersect with those in $\cGM_{m+1}$ . 
\end{rem}
\begin{cor}\label{cor:weightofh}
Let $f$ be a bent function defined by Eq. \eqref{eq:GMM+1}. Then, the Hamming weight of $h$ satisfies $2^{m-1} \leq \operatorname{wt}(h) \leq 3 \cdot 2^{m-1}$.
\end{cor}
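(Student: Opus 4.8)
The plan is to observe that Theorem~\ref{theo: GMM+1condition} already pins down the entire combinatorial structure behind a bent $f$, so that the weight of $h$ decomposes cleanly over the associated partition and the bounds follow by elementary counting. First I would recall that, by Theorem~\ref{theo: GMM+1condition}, the preimages $\phi^{-1}(a)$ for $a \in \F_2^{m-1}$ form a partition of $\F_2^{m+1}$ into $2$-dimensional affine subspaces. Since each such subspace contains exactly $4$ points and the domain $\F_2^{m+1}$ has $2^{m+1}$ points, the number of blocks in the partition is exactly $2^{m+1}/4 = 2^{m-1}$.

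Next I would decompose the weight of $h$ along this partition. Because the sets $\phi^{-1}(a)$ are pairwise disjoint and cover $\F_2^{m+1}$, we have
\[
\operatorname{wt}(h) = \sum_{a \in \F_2^{m-1}} \operatorname{wt}\!\left(h|_{\phi^{-1}(a)}\right),
\]
where $h|_{\phi^{-1}(a)}$ denotes the restriction of $h$ to the block $\phi^{-1}(a)$. The second condition in Theorem~\ref{theo: GMM+1condition} states that each such restriction has odd weight; since each block has only $4$ elements, the only admissible odd values are $1$ and $3$, so $\operatorname{wt}\!\left(h|_{\phi^{-1}(a)}\right) \in \{1,3\}$ for every $a$.

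Finally I would bound the sum term by term. The minimal total is achieved when every block contributes weight $1$, giving $\operatorname{wt}(h) \geq 2^{m-1} \cdot 1 = 2^{m-1}$, and the maximal total is achieved when every block contributes weight $3$, giving $\operatorname{wt}(h) \leq 2^{m-1} \cdot 3 = 3 \cdot 2^{m-1}$, which is exactly the claimed pair of inequalities. I do not anticipate any genuine obstacle here: the statement is an immediate counting consequence of the structural characterization in Theorem~\ref{theo: GMM+1condition}, the only point worth stating carefully being that ``odd weight in a $4$-element block'' forces the value into $\{1,3\}$ rather than an arbitrary odd number. Both extremes are attainable by choosing $h$ to have constant restricted weight across all blocks, so the bounds are in fact tight.
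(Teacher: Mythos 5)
Your proposal is correct and follows essentially the same argument as the paper: both decompose $\operatorname{wt}(h)$ over the partition $\{\phi^{-1}(a)\}_{a\in\F_2^{m-1}}$ guaranteed by Theorem~\ref{theo: GMM+1condition}, note that each of the $2^{m-1}$ blocks contributes weight $1$ or $3$, and bound the sum accordingly (the paper phrases this via the count $|N_3|$ of weight-$3$ blocks, writing $\operatorname{wt}(h)=2^{m-1}+2|N_3|$, which is just a bookkeeping variant of your term-by-term bound). Your closing remark on tightness is a small bonus not claimed in the corollary, and it is indeed justified since Theorem~\ref{theo: GMM+1condition} lets each block's restriction be chosen with weight $1$ or $3$ independently.
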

\begin{proof}
Denote by $N_3$ the set of vectors $a$ in $\F_2^{m-1}$ for which the restriction of $h$ on $\phi^{-1}(a)$ has weight $3$. Theorem~\ref{theo: GMM+1condition} implies that, for all vectors $b \in \F_2^{m-1} \setminus N_3$, the restriction of $h$ on $\phi^{-1}(b)$ has weight 1. Since $\left\lbrace \phi^{-1}(v) \mid v \in \F_2^{m-1} \right\rbrace$ is a partition of $\F_2^{m+1}$, the weight of $h$ is equal to (where ``$| \cdot |$'' denotes the cardinality) $$3 \cdot |N_3|+ 1 \cdot (2^{m-1}-|N_3|)= 2^{m-1}+2 \cdot |N_3|,$$
and since $0 \leq |N_3| \leq 2^{m-1}$, the result follows. 
    \end{proof}

\begin{theo}\label{theo: dualform}
Let $f$ be a bent function on $\F_2^{2m}$, thus satisfying the conditions in Theorem \ref{theo: GMM+1condition}, defined  by
\begin{eqnarray*}
f(x,y)=x\cdot\phi(y) + 
h(y), \;\; x \in \F_2^{m-1}, y \in \F_2^{m+1},
\end{eqnarray*}
where $\phi
\colon\F_2^{m+1} \rightarrow \F_2^{m-1}$  and  $h \colon \F_2^{m+1} \rightarrow \F_2$. Let $\prec$ denote any total ordering on $\F_2^{m+1}$ (in particular, we can take the lexicographic order on $\F_2^{m+1}$).  Then, the dual function $f^*$ of $f$ is given by
\begin{equation} \label{eq:dualform+1}
    f^*(a,b)= \sum_{\substack{y,y' \in \phi^{-1}(a) \\ y \precneqq y'}}(h(y)+y \cdot b)(h(y')+y' \cdot b),
\end{equation}
for all  $a\in \F_2^{m-1}, b \in \F_2^{m+1}$.
\end{theo}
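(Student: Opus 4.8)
The plan is to reuse the Walsh transform computation already carried out in the proof of Theorem~\ref{theo: GMM+1condition} and to translate it into a formula for the sign $(-1)^{f^*(a,b)}$. Recall from that proof that for $a \in \F_2^{m-1}$ and $b \in \F_2^{m+1}$ one has
\[
W_f(a,b) = 2^{m-1}\sum_{y \in \phi^{-1}(a)}(-1)^{h(y)+y\cdot b}.
\]
Since $f$ is bent on $\F_2^{2m}$, the defining relation $W_f(a,b)=2^{m}(-1)^{f^*(a,b)}$ immediately yields
\[
(-1)^{f^*(a,b)} = \tfrac{1}{2}\sum_{y \in \phi^{-1}(a)}(-1)^{h(y)+y\cdot b}.
\]
Writing $\phi^{-1}(a)=\{y_1,y_2,y_3,y_4\}$ and setting $g_i := h(y_i)+y_i\cdot b \in \F_2$, I would note that the inner sum equals $4 - 2\sum_{i=1}^4 g_i$ as an integer, so the whole expression becomes a statement about the integer weight $w := \sum_{i=1}^4 g_i$ of the binary $4$-tuple $(g_1,g_2,g_3,g_4)$.

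The second step is to pin down $f^*(a,b)$ through $w$. By Theorem~\ref{theo: GMM+1condition} the restriction of $h$ to $\phi^{-1}(a)$ has odd weight, and the bentness of $f$ forces the inner sum to be $\pm 2$, so $w$ is odd and hence $w \in \{1,3\}$. Then $(-1)^{f^*(a,b)} = \tfrac{1}{2}(4-2w) = 2-w$ equals $+1$ when $w=1$ and $-1$ when $w=3$; equivalently $f^*(a,b)=0$ for $w=1$ and $f^*(a,b)=1$ for $w=3$.

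The final step is to recognize the right-hand side of \eqref{eq:dualform+1} as the second elementary symmetric polynomial of $g_1,\dots,g_4$ evaluated modulo $2$. For a fixed total ordering $\prec$ the sum in \eqref{eq:dualform+1} runs exactly once over each unordered pair $\{y_i,y_j\}$, so it equals $\sum_{1\le i<j\le 4} g_i g_j \pmod 2$; in particular it is independent of the chosen ordering, since this expression is symmetric in the $g_i$. I would then invoke the classical identity $\sum_{i<j} g_i g_j \equiv \binom{w}{2} \pmod 2$, valid for any binary tuple of weight $w$, because $g_i g_j = 1$ precisely for the $\binom{w}{2}$ pairs of indices with $g_i=g_j=1$. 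Since $\binom{1}{2}=0$ and $\binom{3}{2}=3\equiv 1 \pmod 2$, the value of \eqref{eq:dualform+1} is $0$ when $w=1$ and $1$ when $w=3$, matching exactly the values of $f^*(a,b)$ computed above, and this settles the formula for all $(a,b)$.

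I expect the only delicate point to be bookkeeping rather than genuine difficulty: one must orient the sign correctly in the passage from $(-1)^{f^*}$ to $f^*$ (so that weight $1$ gives $f^*=0$ and weight $3$ gives $f^*=1$), and one must use the ordering $\prec$ purely to select a single representative of each pair, so that \eqref{eq:dualform+1} is genuinely the symmetric polynomial $e_2$ and not a multiple of it. Everything else reduces to the weight-versus-$\binom{w}{2}$ identity.
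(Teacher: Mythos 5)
Your proposal is correct and follows essentially the same route as the paper: both start from the Walsh computation $W_f(a,b)=2^{m-1}\sum_{y\in\phi^{-1}(a)}(-1)^{h(y)+y\cdot b}$ established in the proof of Theorem~\ref{theo: GMM+1condition}, use the oddness of the weight $w$ of $(h(y_i)+y_i\cdot b)_{i=1}^4$, and identify the right-hand side of Eq.~\eqref{eq:dualform+1} as the second elementary symmetric polynomial, which equals $0$ on weight-$1$ and $1$ on weight-$3$ vectors. The paper packages this last step as the identity $\sum_{i=1}^4(-1)^{z_i}=2(-1)^{g(z)}$ for odd-weight $z\in\F_2^4$ with $g=e_2$, while you verify it via $4-2w$ and $\binom{w}{2}\bmod 2$ — the same check in different notation.
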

\begin{proof}
From the proof of Theorem \ref{theo: GMM+1condition}, we have
\begin{equation*}
    W_f(a,b)=2^{m-1} \sum_{y\in \phi^{-1}(a)}\left( -1 \right) ^{
h(y) \oplus y\cdot b},
\end{equation*}
and that, for $\phi^{-1}(a)= \{y_1,y_2,y_3,y_4\}$, the vector $$(h(y_1) + y_1\cdot b, \; h(y_2) + y_2\cdot b, \; h(y_3) + y_3\cdot b, \; h(y_4) + y_4\cdot b) \in \F_2^4$$
has odd weight, for all  $a\in \F_2^{m-1}, b \in \F_2^{m+1}$.
Let $g \colon \F_2^4 \to \F_2$ be the function defined by $$g(z_1,z_2,z_3,z_4)= z_1z_2+z_1z_3+z_1z_4+z_2z_3+z_2z_4+z_3z_4.$$
The function $g$ is such that $g(z)=1$ for vectors $z\in\F_2^4$ with $\operatorname{wt}(z)=3$, and $g(z)=0$ for vectors $z\in\F_2^4$ with $\operatorname{wt}(z)=1$.
Then, for vectors $(z_1,z_2,z_3,z_4) \in \F_2^4$ of odd weight, we have
\begin{equation*}
    (-1)^{z_1}+(-1)^{z_2}+(-1)^{z_3}+(-1)^{z_4}= 2 (-1)^{g(z_1,z_2,z_3,z_4)},
\end{equation*}
and consequently
\begin{equation*}
    W_f(a,b)=2^{m} \left( -1 \right)^{g(h(y_1) + y_1\cdot b, \; h(y_2) + y_2\cdot b, \; h(y_3) + y_3\cdot b, \; h(y_4) + y_4\cdot b)},
\end{equation*}
and Eq. \eqref{eq:dualform+1} follows. 
\end{proof}

\begin{rem}
    Note that any function $g' \colon \F_2^4 \to \F_2$ such that $g'(z)=1$ for vectors $z\in\F_2^4$ with $\operatorname{wt}(z)=3$, and $g'(z)=0$ for vectors $z\in\F_2^4$ with $\operatorname{wt}(z)=1$, could have been used in the proof of Theorem \ref{theo: dualform} to obtain a version of Eq.~\eqref{eq:dualform+1} and to write $f^*$ in a slightly different form. There are exactly $2^8=256$ such functions, since the values on inputs of even weight can be chosen arbitrarily. Furthermore, from the proof of Theorem \ref{theo: dualform} we know that the weight of $$(h(y_1) + y_1\cdot b, \; h(y_2) + y_2\cdot b, \; h(y_3) + y_3\cdot b, \; h(y_4) + y_4\cdot b) \in \F_2^4$$ is odd, for all $b \in \F_2^{m+1}$ and $a \in \F_2^{m-1}$, where $\phi^{-1}(a)= \{y_1,y_2,y_3,y_4\}$. Since all the functions $g' \colon \F_2^4 \to \F_2$, used to represent $f^*$, agree on the vectors of odd weight, although we have different representations of $f^*$, the values of $f^*$ remain unchanged and unique as expected. We have chosen $g$ in Theorem \ref{theo: dualform} for which the obtained form of the dual $f^*$ seems to be the simplest one.

\end{rem}

    \begin{cor}\label{cor: thenumberofh}
        Let $\phi \colon \F_2^{m+1} \to \F_2^{m-1}$ be a 4-to-1 mapping such that $\left\lbrace \phi^{-1}(a) \mid a \in \F_2^{m-1} \right\rbrace$ is a partition of $\F_2^{m+1}$ into $2$-dimensional affine subspaces. Then, there are exactly $2^{3 \cdot 2^{m-1}}$ functions $h \colon \F_2^{m+1} \to \F_2$ such that the function $f$ defined by
        \begin{eqnarray} \label{eq: counting}
        f(x,y)=x\cdot\phi(y) +
        h(y), \;\; x \in \F_2^{m-1}, y \in \F_2^{m+1},
        \end{eqnarray}
        is bent.
    \end{cor}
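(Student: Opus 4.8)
The plan is to reduce the count to an independent choice on each block of the partition induced by $\phi$. First I would invoke Theorem~\ref{theo: GMM+1condition}: since $\phi$ is assumed to partition $\F_2^{m+1}$ into $2$-dimensional affine subspaces, the function $f$ in Eq.~\eqref{eq: counting} is bent if and only if the restriction of $h$ to each block $\phi^{-1}(a)$ has odd weight. The key observation is that this bentness condition decouples completely across the blocks $\phi^{-1}(a)$, $a \in \F_2^{m-1}$: it imposes a constraint on $h$ within each block, but no constraint linking the values of $h$ on distinct blocks.

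Next I would pin down the number of blocks. Since $\phi \colon \F_2^{m+1} \to \F_2^{m-1}$ is $4$-to-$1$, the domain $\F_2^{m+1}$, of size $2^{m+1}$, is split into $2^{m+1}/4 = 2^{m-1}$ nonempty preimage sets, one for each $a \in \F_2^{m-1}$, each of cardinality $4$ (in particular $\phi$ is surjective). As $\{\phi^{-1}(a) \mid a \in \F_2^{m-1}\}$ is a partition of the whole domain, specifying $h \colon \F_2^{m+1} \to \F_2$ is equivalent to independently specifying the restriction $h|_{\phi^{-1}(a)}$ for every $a$.

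Then I would count the admissible restrictions on a single block. On a block of size $4$, specifying $h$ amounts to choosing a value in $\F_2$ at each of the four points, i.e., choosing a vector in $\F_2^4$; the odd-weight condition selects exactly the vectors of weight $1$ or $3$, of which there are $\binom{4}{1}+\binom{4}{3} = 8 = 2^3$. Because the choices over the $2^{m-1}$ blocks are mutually independent, the total number of admissible $h$ is $(2^3)^{2^{m-1}} = 2^{3 \cdot 2^{m-1}}$, which is the claimed count.

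There is essentially no hard step here: the entire analytic content is supplied by Theorem~\ref{theo: GMM+1condition}, after which the corollary follows from the multiplicativity of independent choices over the blocks. The only point that deserves a line of care is confirming that $\phi$, being $4$-to-$1$ on a space of size $2^{m+1}$, yields exactly $2^{m-1}$ blocks that genuinely partition $\F_2^{m+1}$, so that distinct blocks share no point and the per-block choices are truly independent; this is immediate from the standing hypothesis that $\{\phi^{-1}(a)\}$ is a partition.
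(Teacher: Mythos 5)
Your proposal is correct and follows essentially the same route as the paper's proof: both reduce the bentness of $f$ via Theorem~\ref{theo: GMM+1condition} to the odd-weight condition on each block $\phi^{-1}(a)$, count $8$ admissible restrictions per block (the paper enumerates $4+4$ choices for weight $1$ or $3$, you write $\binom{4}{1}+\binom{4}{3}=8$), and conclude by independence over the $2^{m-1}$ blocks that the total is $8^{2^{m-1}}=2^{3\cdot 2^{m-1}}$.
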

    \begin{proof}
        From Theorem \ref{theo: GMM+1condition}, once the mapping $\phi$ is fixed, for every $a\in\F_2^{m-1}$, we are free to choose whether the restriction of $h$ to $\phi^{-1}(a)$ will have weight $1$ or $3$.  Then, if the weight of the restriction is $1$ we have $4$ options to choose which element of  $\phi^{-1}(a)$  maps to $1$, alternatively,  if the weight of the restriction is $3$ we have $4$ options to choose which element of  $\phi^{-1}(a)$  maps to $0$. 
        %{\bf Enes: This is written in a strange way !!} 
        Hence, for every $a\in\F_2^{m-1}$, we have $8$ options for the restriction of $h$ to $\phi^{-1}(a)$. We deduce that there are exactly $8^{2^{m-1}}=2^{3 \cdot 2^{m-1}}$ options for the function $h$ so that the function $f$ defined by Eq.~\eqref{eq: counting} is bent. 
    \end{proof}
 
\begin{ex}\label{ex:different_h}
	Let $m = 4$, and consider the mapping $\phi \colon \F_2^5 \to \F_2^3$ whose coordinate functions are given by (as rows):
	\begin{equation*}
		\begin{split}
			\phi(y_1,y_2,y_3,y_4,y_5)&=\begin{pmatrix}
				y_2 + y_3 + y_1 y_3 + y_1 y_4 + y_1 y_2 y_4 + y_3 y_4 + y_1 y_3 y_4 + y_2 y_3 y_4 + 
				y_1 y_2 y_5 + y_3 y_5\\ 
				y_1 + y_1 y_3 + y_2 y_3 + y_1 y_4 + y_2 y_4 + y_3 y_4 + y_2 y_3 y_4 + y_2 y_5 + 
				y_1 y_2 y_5 + y_1 y_3 y_5\\ 
				y_1 y_2 + y_3 + y_1 y_4 + y_2 y_4 + y_1 y_2 y_4 + y_2 y_3 y_4 + y_1 y_5 + 
				y_1 y_3 y_5 + y_2 y_3 y_5
			\end{pmatrix}^T.
		\end{split}
	\end{equation*}
	%Consider the function $f$ on $\F_2^8$ from Example~\ref{ex:GMMsmallaffinefun}. 
    It can be verified that  $f(x,y)=x\cdot \phi(y)$, where $x\in\F_2^3$ and $y\in\F_2^5$, is a bent function with a unique $\mathcal{M}$-subspace of dimension $3$ (namely, the canonical $\mathcal{M}$-subspace $\F_2^3 \times \{0_5 \}$). Consequently, $f \in \mathcal{GMM}_{5} \setminus \cM^\#$. Define now $f_h(x,y)=x\cdot \phi(y)+h(y)$, where $h\in\mathcal{B}_5$ is affine-free and not of maximal degree 5 (due to the upper bound on algebraic degree of bent functions). We note that the number of such functions $h$ in $\mathcal{B}_5$ is equal to $2^{25}$. 
	%so that $\#h\in\mathcal{B}_5=2^{25}$. 
	Using a computer algebra system, we could confirm that out of $2^{25}$ such functions there are  $2^{18}$ bent functions, which  is exactly the value $2^{3 \cdot 2^{m-1}}=2^{24}$ in Corollary \ref{cor: thenumberofh} (modulo the number of affine terms $2^6$). 
	Moreover, the decomposition of $\F_2^5$ w.r.t. 
	$\phi^{-1}(a)$ when $a \in \F_2^3$ is given by: 
	\begin{eqnarray*}
		& & A_0 =\{00000, 00001, 00010, 00011\}, \quad A_1=\{00101, 01110, 10100, 11111\},\\
		& & A_2=\{10000, 10111, 11001, 11110\}, \quad 	A_3=\{00110, 01100, 10001, 11011\}, \\
		& & A_4=\{01000, 01101, 10011, 10110\}, \quad  A_5=\{00100, 01011, 10010, 11101\}, \\
		&& 	A_6=\{01001, 01111, 11010, 11100\}, \quad	A_7=\{00111, 01010, 10101, 11000\}.
	\end{eqnarray*}
	
\end{ex}
%{\color{blue}
	\begin{rem}\label{rem:nodependency-on-h}
		%	Why do we have $2^{25}$ functions $h$ instead of $2^{26}$ since the total number of Boolean functions on $\F_2^5$ is $2^{32}$ and then modulo affine terms (thus removing 6 coefficients) gives $2^{26}$ ??????? \newline
		For the mapping $\phi\colon\F_2^5\to\F_2^3$ in Example \ref{ex:different_h}, we constructed a random sample of $2^{10}$ bent functions $f_h$ of the form $f_h(x,y)=x\cdot \phi(y)+h(y)$, all of which are outside $\mathcal{M}^\#$. Essentially, the function $h$ only seems to affect the bentness of $f_h$ and appears to be irrelevant with respect to the class inclusion in $\mathcal{M}^\#$.
	\end{rem}
	%}
\begin{rem}\label{rem:structure-of-f}
	Notably, among  $2^{25}$ functions $f_h$ considered in Example \ref{ex:different_h}  which  are not bent (with fixed $\phi$ and varying $h$), there are exactly $2^{18}$ semi-bent functions and $2^{19}\cdot 3^2\cdot 7$ 5-valued spectra functions. Therefore, there are no other possibilities for $f_h$ other than bent, semi-bent, or 5-valued spectra functions. This essentially follows from Theorem~\ref{theo: GMM+1condition} and the computation of the Walsh spectrum which is given by $W_f(a,b)=2^{m-1} \sum_{y\in \phi^{-1}(a)}\left( -1 \right) ^{
		h(y) + y\cdot b}$, where $|\phi^{-1}(a)|=4$, for any $a \in \F_2^{m-1}$. Hence, similar conditions related to semi-bent and 5-valued spectra functions in the $\mathcal{GMM}_{m-1}$ class can be specified through the requirements imposed on $h$.
\end{rem}

The fact that all tested bent functions in Example~\ref{ex:different_h} are outside $\cM^\#$ indicates that, due to the choice of $\phi$, the partition of $\F_2^{m+1}$ into 2-dimensional affine subspaces is non-trivial (more precisely, it is a proper partition of $\F_2^{m+1}$, see Definition~\ref{def: properdec}), which is addressed later in Section \ref{subsec:bentinMM}. 

\subsection{Concatenating functions in $\mathcal{GMM}_{\frac{n}{2}+1}$ with the same partition}
In this section, we will demonstrate that the case of defining $f=f_1||f_2||f_3||f_4$, where all bent functions $f_i(x,y)=x \cdot \phi(y) + h_i(y) \in \cGM_{\frac{n}{2}+1}$ defined on $\F_2^n$ use the same partition (thus the same mapping $\phi$) leads to a very important result. More precisely, we will show that every bent function in the $\cGM_{m+1}$ class (including those outside $\cM^\#$) is a part of a bent function in the $\cM^\#$ class in a larger number of variables. Alternatively,  it is possible to design (many) bent functions $f$ on $\F_2^{n+2}$ in the strict Maiorana-McFarland class, whose restrictions to $\F_2^n \times \{ (0,0) \}$ are bent functions outside the $\mathcal{M}^\#$ class. This result actually positively answers the open problem stated in \cite{ISIT-IEEE2024}: ``Does there exist a bent function
$f = f_1 || f_2 || f_3 || f_4$ on $\mathbb{F}_2^{n+2}$
in the $\mathcal{M}^{\#}$ class, such that each $f_i \in \mathcal{B}_n$ is bent and satisfies $f_i \notin \mathcal{M}^{\#}$?''
\begin{theo}\label{theo: dualofconcatenation}
    Let $\phi  \colon \F_2^{m+1} \to \F_2^{m-1}$ be a mapping such that $\{ \phi^{-1}(a)\}_{a \in \F_2^{m-1}}$ is a partition of $\F_2^{m+1}$ into $2$-dimensional affine subspaces. Let $h_i \colon \F_2^m \to \F_2$, $i=1,\ldots,4$, be such that
    $$f_i(x,y)= x \cdot \phi(y) + h_i(y), \; x \in \F_2^{m-1}, y \in \F_2^{m+1},$$
    is bent. Then, the function $f=f_1||f_2||f_3||f_4 \in \cB_{2m+2}$ is bent if and only if for every $a \in \F_2^{m-1}$ the following two conditions hold: 
    \begin{enumerate}[I)]
        \item $\displaystyle\sum_{i=1}^4 \sum_{\substack{y,y' \in \phi^{-1}(a) \\ y \precneqq y'}}h_i(y)h_i(y')=1, $
        \item $\displaystyle \sum_{i=1}^4h_i(y)=0, \text{ for all } y \in \phi^{-1}(a)$; or $\;\displaystyle\sum_{i=1}^4h_i(y)=1, \text{ for all } y \in \phi^{-1}(a)$. 
    \end{enumerate}    
\end{theo}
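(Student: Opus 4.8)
The plan is to first reduce the bentness of the concatenation $f=f_1\|f_2\|f_3\|f_4$ to a parity condition on the duals $f_i^*$, and then to substitute the explicit dual formula of Theorem~\ref{theo: dualform}. For the first step I would write the Walsh transform of the four-fold concatenation at $(\omega,\nu)\in\F_2^{2m}\times\F_2^2$ as the Hadamard combination $W_f(\omega,\nu)=\sum_{u\in\F_2^2}(-1)^{\nu\cdot u}W_{f_u}(\omega)$, where $f_{(0,0)}=f_1,\dots,f_{(1,1)}=f_4$. Since each $f_i$ is bent, $W_{f_i}(\omega)=2^m(-1)^{f_i^*(\omega)}$, so the four numbers $2^{-m}W_f(\omega,\nu)$ are precisely the entries of $H\varepsilon$, where $H$ is the $4\times4$ Walsh--Hadamard matrix and $\varepsilon=\big((-1)^{f_i^*(\omega)}\big)_i\in\{\pm1\}^4$. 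Because $H^{\mathrm{T}}H=4I$ one has $\|H\varepsilon\|^2=16$, and the only integer vectors with even entries and squared norm $16$ have either all entries $\pm2$ or a single entry $\pm4$; hence all four entries equal $\pm2$ if and only if the $\nu=0$ entry $\varepsilon_1+\varepsilon_2+\varepsilon_3+\varepsilon_4$ equals $\pm2$, i.e.\ if and only if $\varepsilon_1\varepsilon_2\varepsilon_3\varepsilon_4=-1$. Thus $f$ is bent iff $\sum_{i=1}^4 f_i^*(a,b)\equiv 1\pmod 2$ for all $(a,b)\in\F_2^{m-1}\times\F_2^{m+1}$.

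For the second step I fix $a$, write $\phi^{-1}(a)=\{y_1\prec y_2\prec y_3\prec y_4\}$, and set $z^{(i)}=(h_i(y_1),\dots,h_i(y_4))$ and $w=(y_1\cdot b,\dots,y_4\cdot b)$. By Theorem~\ref{theo: dualform}, $f_i^*(a,b)=g(z^{(i)}+w)$ with $g$ the degree-two elementary symmetric form. Expanding the associated symmetric bilinear form gives $g(z+w)=g(z)+g(w)+B(z,w)$ with $B(z,w)\equiv \operatorname{wt}(z)\operatorname{wt}(w)+\langle z,w\rangle\pmod 2$; since $\sum_j y_j=0_{m+1}$ forces $\operatorname{wt}(w)$ to be even, this collapses to $B(z,w)\equiv\langle z,w\rangle$. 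Summing over $i$ kills the term $4\,g(w)$ modulo $2$, and I obtain $\sum_{i=1}^4 f_i^*(a,b)\equiv \sum_{i=1}^4 g(z^{(i)})+\big(\sum_{j=1}^4 Z_j y_j\big)\cdot b \pmod 2$, where $Z=\sum_{i=1}^4 z^{(i)}\in\F_2^4$, that is $Z_j=\sum_{i=1}^4 h_i(y_j)$.

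Now I separate the $b$-constant and $b$-linear parts: the displayed quantity equals $1$ for every $b\in\F_2^{m+1}$ if and only if the linear coefficient $\sum_{j}Z_j y_j$ vanishes and the constant $\sum_i g(z^{(i)})$ equals $1$. The constant equation is exactly condition I), since $g(z^{(i)})=\sum_{j<k}z^{(i)}_j z^{(i)}_k=\sum_{y\precneqq y'}h_i(y)h_i(y')$. For the linear equation I use two facts: the vector $Z$ has even weight, because $\sum_j Z_j=\sum_{i=1}^4\operatorname{wt}\big(h_i|_{\phi^{-1}(a)}\big)$ is a sum of four odd integers by Theorem~\ref{theo: GMM+1condition}; and the four points $y_j$ are distinct with $\sum_j y_j=0_{m+1}$. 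For $Z$ of weight $0$ or $4$ the sum $\sum_j Z_j y_j$ is $0_{m+1}$, whereas for $Z$ of weight $2$ supported on $\{j,k\}$ it equals $y_j+y_k\neq 0_{m+1}$ by distinctness; hence among even-weight $Z$ the linear equation holds exactly for $Z\in\{0000,1111\}$, which is precisely condition II). Collecting the two per-$a$ equivalences over all $a\in\F_2^{m-1}$ finishes the proof.

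The step I expect to be the main obstacle is the analysis of the linear equation $\sum_j Z_j y_j=0_{m+1}$. The naive hope that this relation forces $Z$ to be constant fails for the unique subspace $\phi^{-1}(a)$ passing through the origin: there $y_1=0_{m+1}$, so the coefficient $Z_1$ disappears and the relation only constrains $Z_2,Z_3,Z_4$, admitting spurious solutions such as $Z=(1,0,0,0)$. The resolution I would emphasize is that the evenness of $\operatorname{wt}(Z)$, inherited from the odd-weight restrictions of the $h_i$, rules out exactly these spurious patterns, thereby restoring the equivalence with condition II); verifying this evenness property carefully is the crux of the argument.
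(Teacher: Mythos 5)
Your proposal is correct and follows essentially the same route as the paper's proof: reduce bentness of $f=f_1||f_2||f_3||f_4$ to the criterion $f_1^*+f_2^*+f_3^*+f_4^*=1$, substitute the dual formula of Theorem~\ref{theo: dualform}, split the resulting expression into its constant and $b$-linear parts, and resolve the linear part using the even weight of $Z=\bigl(\sum_{i=1}^4 h_i(y_j)\bigr)_j$ (forced by the odd-weight restrictions of Theorem~\ref{theo: GMM+1condition}) together with the distinctness of the points $y_j$ — which is exactly the parity argument the paper uses, including for the subspace through the origin that you flag as the crux. The only differences are presentational: you prove the dual-sum criterion via the Hadamard-matrix norm argument where the paper cites \cite{SHCF}, and you organize the expansion through the polarization identity $g(z+w)=g(z)+g(w)+\operatorname{wt}(z)\operatorname{wt}(w)+\langle z,w\rangle$ rather than expanding term by term.
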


\begin{proof}
Since the functions $f_i$, $i=1, \dots ,4$, are bent, the function $f$ is bent if and only if $f_1^*+f_2^*+f_3^*+f_4^*=1$, see  \cite{SHCF}. From Theorem \ref{theo: dualform}, we have that the duals of $f_i$ have the following form:
 \begin{align*}   f_i^*(a,b) &= \sum_{\substack{y,y' \in \phi^{-1}(a) \\ y \precneqq y'}}(h_i(y)+y \cdot b)(h_i(y')+y' \cdot b) \\
 &= \sum_{\substack{y,y' \in \phi^{-1}(a) \\ y \precneqq y'}} \left( h_i(y)h_i(y') + (h_i(y)y') \cdot b + (h_i(y')y) \cdot b + (y \cdot b)(y' \cdot b) \right) \\
 &=\sum_{\substack{y,y' \in \phi^{-1}(a) \\ y \precneqq y'}} \left( h_i(y)h_i(y') +  \left( h_i(y)y' + h_i(y')y \right) \cdot b + (y \cdot b)(y' \cdot b) \right),
 \end{align*}
for every $a\in \F_2^{m-1}$ and $ b \in \F_2^{m+1}$.
Let $\phi^{-1}(a)=\{y_1,y_2,y_3,y_4 \}$. Since all the functions $f_i$ are bent, from Theorem \ref{theo: GMM+1condition} we have $h_i(y_1)+h_i(y_2)+h_i(y_3)+h_i(y_4)=1$. Hence, from
\begin{align*}
 \sum_{\substack{y,y' \in \phi^{-1}(a) \\ y \precneqq y'}} \left( h_i(y)y' + h_i(y')y \right) &= \left( h_i(y_2)+h_i(y_3)+h_i(y_4) \right) y_1 + \left( h_i(y_1)+h_i(y_3)+h_i(y_4) \right) y_2 +  \\
&+ \left( h_i(y_1)+h_i(y_2)+h_i(y_4) \right) y_3 + \left( h_i(y_1)+h_i(y_2)+h_i(y_3) \right) y_4,
\end{align*}
it follows that 
\begin{align*}
 \sum_{\substack{y,y' \in \phi^{-1}(a) \\ y \precneqq y'}} \left( h_i(y)y' + h_i(y')y \right) &= \left( h_i(y_1)+1 \right) y_1 + \left( h_i(y_2)+1 \right) y_2 +  \left( h_i(y_3) +1 \right) y_3 + \left( h_i(y_4) +1 \right) y_4 \\
 &= \sum_{y \in \phi^{-1}(a)}h_i(y)y + \sum_{y \in \phi^{-1}(a)}y = \sum_{y \in \phi^{-1}(a)}h_i(y)y,
\end{align*}
where the bentness of  $f_i$  implies that $\phi^{-1}(a)$ is a $2$-dimensional affine subspace by Theorem~\ref{theo: GMM+1condition}, hence $\sum_{y \in \phi^{-1}(a)}y= 0_{m+1}$. On the other hand, the term $(y \cdot b)(y' \cdot b)$ does not depend on $i$, hence for the sum of the duals we obtain
\begin{align*}  \left( \sum_{i=1}^4 f_i^* \right)(a,b) &= \sum_{i=1}^4 \sum_{\substack{y,y' \in \phi^{-1}(a) \\ y \precneqq y'}}h_i(y)h_i(y') + \left(\sum_{i=1}^4 \sum_{y \in \phi^{-1}(a)}h_i(y)y \right) \cdot b.
 \end{align*}
Consequently, since $a\in \F_2^{m-1}$ and $ b \in \F_2^{m+1}$ are arbitrary, $\sum_{i=1}^4 f_i^*=1$ if and only if 
$$
\sum_{i=1}^4 \sum_{\substack{y,y' \in \phi^{-1}(a) \\ y \precneqq y'}} h_i(y)h_i(y') =1 \; \text{ and } 
\sum_{i=1}^4 \sum_{y \in \phi^{-1}(a)} h_i(y)y = 0_{m+1}.
$$
If $\sum_{i=1}^4h_i(y)=0, \text{ for all } y \in \phi^{-1}(a)$, then $\sum_{y \in \phi^{-1}(a)} \sum_{i=1}^4 h_i(y)y=0_{m+1}$, and if $\sum_{i=1}^4h_i(y)=1, \text{ for all } y \in \phi^{-1}(a)$, then $\sum_{y \in \phi^{-1}(a)} \sum_{i=1}^4 h_i(y)y= \sum_{y \in \phi^{-1}(a)} y= 0_{m+1}$ (because $\phi^{-1}(a)$ is a $2$-dimensional affine subspace). 

On the other hand, if $\sum_{i=1}^4h_i(y)=1$ for an odd number of elements $y \in \phi^{-1}(a)$, then $ \sum_{y \in \phi^{-1}(a)} \sum_{i=1}^4h_i(y)=1$, however, since $f_i$, $i=1,2,3,4$, are bent, from Theorem \ref{theo: GMM+1condition} we have that $ \sum_{y \in \phi^{-1}(a)}h_i(y)=1$, and hence $ \sum_{y \in \phi^{-1}(a)} \sum_{i=1}^4h_i(y)=\sum_{i=1}^41=0$, a contradiction. If $\sum_{i=1}^4h_i(y)=1$ for exactly two different elements  $y,y' \in \phi^{-1}(a)$, then 
$\sum_{y \in \phi^{-1}(a)} \sum_{i=1}^4 h_i(y)y = 0_{m+1}$ would imply that $y=y'$, a contradiction. 
We conclude that 
$\sum_{i=1}^4 \sum_{y \in \phi^{-1}(a)} h_i(y)y = 0_{m+1}$ if and only if $\sum_{i=1}^4h_i(y)=0$, for all  $y \in \phi^{-1}(a)$, or $\sum_{i=1}^4h_i(y)=1$, for all  $y \in \phi^{-1}(a)$, and this concludes the proof.
\end{proof}

\begin{rem}\label{rem: matrixHa}
Using the same notation as in Theorem \ref{theo: dualofconcatenation} and defining the function $g$ as in the proof of Theorem \ref{theo: dualform}, i.e., let $g \colon \F_2^4 \to \F_2$ be the function defined by $$g(z_1,z_2,z_3,z_4)= z_1z_2+z_1z_3+z_1z_4+z_2z_3+z_2z_4+z_3z_4, $$
then, for $\phi^{-1}(a)=\{ y_1, y_2, y_3, y_4 \} \subset \F_2^{m+1}$, we can state the first condition in Theorem \ref{theo: dualofconcatenation} as 
$$
\sum_{i=1}^4 g \left( h_i(y_1),h_i(y_2),h_i(y_3),h_i(y_4) \right)=1.
$$
Furthermore, for a $4$-to-$1$ mapping $\phi\colon \F_2^{m+1} \to \F_2^{m-1}$ and every $a \in \F_2^{m-1}$, we can define a matrix $H_a$ in the following way
$$H_a= \begin{pmatrix}
    h_1(y_1) & h_1(y_2) & h_1(y_3) & h_1(y_4) \\
    h_2(y_1) & h_2(y_2) & h_2(y_3) & h_2(y_4) \\
    h_3(y_1) & h_3(y_2) & h_3(y_3) & h_3(y_4) \\
    h_4(y_1) & h_4(y_2) & h_4(y_3) & h_4(y_4) 
\end{pmatrix},$$
where $ \phi^{-1}(a)= \{ y_1, y_2, y_3, y_4 \}\subset \F_2^{m+1}$.
Then, assuming that $\phi^{-1}(a)$ is a $2$-dimensional affine subspace for every $a \in \F_2^{m-1}$,
the functions $f(x,y)=x \cdot \phi(y) + h_i(y)$, $i=1,\ldots,4$, are bent if and only if the sum of the columns of the matrix $H_a$ is equal to $\begin{pmatrix}
     1 \\ 1 \\ 1 \\ 1
\end{pmatrix}$, for all $a \in \F_2^{m-1}$. Similarly, using the matrix $H_a$, we can state the two conditions in Theorem \ref{theo: dualofconcatenation}. Namely, the second condition in Theorem \ref{theo: dualofconcatenation} is satisfied if and only if the sum of the rows of the matrix $H_a$ is equal to $(0,0,0,0)$
or $(1,1,1,1)$, for every $a \in \F_2^{m-1}$. To state the first condition in Theorem  \ref{theo: dualofconcatenation} in terms of $H_a$, notice that the function $g(z_1,z_2,z_3,z_4)= z_1z_2+z_1z_3+z_1z_4+z_2z_3+z_2z_4+z_3z_4 $ is equal to $1$ if the weight of $(z_1,z_2,z_3,z_4)$ is $3$ and to $0$ if the weight of $(z_1,z_2,z_3,z_4)$ is $1$. Hence, the first condition in Theorem \ref{theo: dualofconcatenation} (which is equivalent to $\sum_{i=1}^4 g \left( h_i(y_1),h_i(y_2),h_i(y_3),h_i(y_4) \right)=1$) 
is satisfied if and only if there is an odd number of rows in $H_a$ with weight equal to $3$ (i.e., with 3 ones), that is either we have 3 rows with 3 ones and one row with 1 one, or 1 row with 3 ones and 3 rows with 1 one, for every $a \in \F_2^{m-1}$.
\end{rem}
We will need the following result form \cite{ISIT-IEEE2024,IEEE2024}.
\begin{cor}\label{cor: 4concatenationoutsideMM}\cite{ISIT-IEEE2024,IEEE2024}
	Let $f=f_1 \vert \vert f_2 \vert \vert f_3 \vert \vert f_4\in\mathcal{B}_{n+2}$ be the concatenation of $f_1, \ldots ,f_4 \in \mathcal{B}_n$ and assume that $f$ is bent; thus $f_i$ are bent, semi-bent or five-valued spectra functions. Then, $f$ is outside of the $\cM^{\#}$ class if and only if the following conditions hold:
	\begin{enumerate}[a)]
		\item The functions $f_1, \dots ,f_4$ do not share a common $(n/2+1)$-dimensional $\cM$-subspace; 
		\item There are no common $(n/2)$-dimensional $\cM$-subspaces $V \subset \F_2^n$ of $f_1, \dots ,f_4$ such that there is an element $a\in \F_2^n$ for which
		\begin{equation}
			\begin{gathered}
				D_vf_1+D_vf_2^a= D_vf_3+D_vf_4^a=0, \text{ for all } v \in V, \text{ or} \\
				D_vf_1+D_vf_3^a= D_vf_2+D_vf_4^a=0, \text{ for all } v \in V, \text{ or} \\
				D_vf_1+D_vf_4^a= D_vf_2+D_vf_3^a=0, \text{ for all } v \in V.
			\end{gathered}
		\end{equation}
		\item There are no common $(n/2-1)$-dimensional $\cM$-subspaces $V \subset \F_2^n$ of $f_1, \dots ,f_4$ such that there are elements $a,b\in \F_2^n$ (not necessarily different), for which
		\begin{equation}
			\begin{split}
				&D_vf_1+D_vf_3^a= D_vf_2+D_vf_4^a=D_vf_1+D_vf_2^b =D_vf_3+D_vf_4^b=0, \text{ for all } v \in V, \text{ and} \\
				&f_1(x)+f_2(x+b)+f_3(x+a)+f_4(x+a+b)=0, \text{ for all } x \in \F_2^n.
			\end{split}
		\end{equation}
	\end{enumerate}
\end{cor}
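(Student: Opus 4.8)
The plan is to read $f\in\cM^\#$ through Dillon's criterion (Lemma~\ref{lem M-M second}) applied in the ambient space $\F_2^{n+2}$: since $f$ is bent, $f\in\cM^\#$ if and only if $f$ admits an $\mathcal{M}$-subspace $U$ of dimension $(n+2)/2=n/2+1$. Hence $f\notin\cM^\#$ precisely when no such $U$ exists, and the whole argument reduces to enumerating the possible $(n/2+1)$-dimensional $\mathcal{M}$-subspaces of $f$ and rewriting the vanishing of the second-order derivatives of $f$ as conditions on the four restrictions $f_1,\dots,f_4$, where the slices $(x_{n+1},x_{n+2})=(0,0),(0,1),(1,0),(1,1)$ correspond to $f_1,f_2,f_3,f_4$. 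Write $E=\F_2^n\times\{(0,0)\}$ and $W=\{0_n\}\times\F_2^2$, so that $\F_2^{n+2}=E\oplus W$, and let $\pi\colon\F_2^{n+2}\to W$ be the projection. For $\dim U=n/2+1$, putting $d=\dim(U\cap E)=\dim U-\dim\pi(U)$ and using $\dim\pi(U)\in\{0,1,2\}$ yields the trichotomy $d\in\{n/2+1,\,n/2,\,n/2-1\}$; these three cases will correspond exactly to the failure of conditions a), b), and c).

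The computational engine is an \emph{extension lemma}: if $U_0$ is already an $\mathcal{M}$-subspace of $f$ and $u\in\F_2^{n+2}$, then $U_0\oplus\langle u\rangle$ is an $\mathcal{M}$-subspace of $f$ if and only if $D_vD_uf=0$ for all $v\in U_0$. I would prove this through the coset reformulation of the $\mathcal{M}$-subspace property (that $D_aD_bf=0$ for all $a,b$ in a subspace is equivalent to the additive map $w\mapsto f(c+w)+f(c)$ being linear on every coset), which reduces the new requirement to the single mixed relation $D_vD_uf=0$. For $a',b'\in E$ one has $D_{a'}D_{b'}f(x,s)=D_{a'}D_{b'}f_{\sigma(s)}(x)$, where $\sigma(s)$ is the index of the slice $s$, so a subspace $U_0\subseteq E$ is an $\mathcal{M}$-subspace of $f$ exactly when it is a common $\mathcal{M}$-subspace of $f_1,\dots,f_4$; this settles $d=n/2+1$, which occurs iff a) fails. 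For $d=n/2$ write $U=U_0\oplus\langle u\rangle$ with $U_0=U\cap E$ of dimension $n/2$ and $u=(a,\epsilon)$, $0\neq\epsilon\in W$. A direct computation gives, for $v\in E$, $D_vD_uf(x,s)=D_vf_{\sigma(s)}(x)+D_vf_{\sigma(s+\epsilon)}^{a}(x)$, so $D_vD_uf=0$ splits into two relations of the form $D_vf_i+D_vf_j^a=0$; the three choices $\epsilon\in\{(0,1),(1,0),(1,1)\}$ produce exactly the pairings $\{1,2\},\{3,4\}$; $\{1,3\},\{2,4\}$; $\{1,4\},\{2,3\}$ of condition b), so such a $U$ exists iff b) fails.

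For $d=n/2-1$ write $U=U_0\oplus\langle u_1,u_2\rangle$ with $U_0=U\cap E$ of dimension $n/2-1$ and $\pi(u_1)=\epsilon_1,\pi(u_2)=\epsilon_2$ a basis of $W$; say $u_1=(a,\epsilon_1)$, $u_2=(b,\epsilon_2)$. Extending $U_0$ first by $u_1$ and then by $u_2$, the lemma produces the four mixed relations $D_vD_{u_1}f=D_vD_{u_2}f=0$ for $v\in U_0$, which for the choice $\epsilon_1=(1,0)$, $\epsilon_2=(0,1)$ read $D_vf_1+D_vf_3^a=D_vf_2+D_vf_4^a=D_vf_1+D_vf_2^b=D_vf_3+D_vf_4^b=0$, together with the single condition $D_{u_1}D_{u_2}f=0$. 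Since $\epsilon_1,\epsilon_2$ span $W$, the slices $s,s+\epsilon_1,s+\epsilon_2,s+\epsilon_1+\epsilon_2$ exhaust all four indices, and evaluating $D_{u_1}D_{u_2}f$ on $s=(0,0)$ gives precisely $f_1(x)+f_2(x+b)+f_3(x+a)+f_4(x+a+b)$, whose vanishing is the affine condition in c). Thus such a $U$ exists iff c) fails, and combining the three cases shows that no $(n/2+1)$-dimensional $\mathcal{M}$-subspace of $f$ exists iff a), b), c) all hold, which is the claim.

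The main subtlety I expect to handle carefully lies in the two-step extension of the case $d=n/2-1$: a priori, imposing $D_vD_{u_2}f=0$ only for $v\in U_0$ is weaker than imposing it for all $v\in U_0\oplus\langle u_1\rangle$, and the discrepancy is governed by the third-order derivative $D_vD_{u_1}D_{u_2}f$. The resolution is that this term vanishes automatically: $D_vD_{u_1}f=0$ as a function (already secured in the first extension step), so any further derivative of it is zero, which is exactly what makes the basis $\{u_1,u_2\}$ together with $U_0$ sufficient and prevents spurious higher-order constraints from appearing. A secondary bookkeeping point is to check that the remaining pairing $\{1,4\},\{2,3\}$, arising from $u_1+u_2=(a+b,(1,1))$, is implied by the listed relations; this again follows from the additivity of $v\mapsto D_vf$ modulo the already-vanishing second-order terms.
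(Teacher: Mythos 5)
Your proposal is correct, but it is worth noting that the paper itself contains no proof of this statement: the corollary is imported verbatim from \cite{ISIT-IEEE2024,IEEE2024}, and within the paper's own framework it is an immediate consequence of Theorem~\ref{th: formofMsubspaces4concatenation} (the classification of $\cM$-subspaces of a 4-concatenation, also cited from \cite{ISIT-IEEE2024}) combined with Dillon's criterion (Lemma~\ref{lem M-M second}): items a)--e) of that theorem, read at dimension $n/2+1$, are exactly the negations of conditions a), b), c). What you have done is re-prove, from scratch, precisely the top-dimensional case of that classification: the trichotomy $\dim(U\cap E)\in\{n/2+1,\,n/2,\,n/2-1\}$, the one-step extension lemma, and the mixed-derivative computations reproducing the three pairings $\{1,2\},\{3,4\}$; $\{1,3\},\{2,4\}$; $\{1,4\},\{2,3\}$ and the affine condition $f_1(x)+f_2(x+b)+f_3(x+a)+f_4(x+a+b)=0$. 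Your handling of the two genuine subtleties is sound: the two-step extension discrepancy is indeed the third-order derivative $D_vD_{u_1}D_{u_2}f$, which vanishes because $D_vD_{u_1}f$ is identically zero and derivatives commute; and the reduction of $D_{u_1}D_{u_2}f=0$ to the single slice $s=(0,0)$ is legitimate because $D_{u_1}D_{u_2}f$ is invariant under translation of its argument by $u_1$ and $u_2$, and these translations act transitively on the four slices (this last point deserves to be stated explicitly rather than gestured at). The trade-off: the paper's route is a one-line deduction from a stronger cited theorem valid for $\cM$-subspaces of every dimension, while your argument is self-contained, works only in the dimension actually needed, and makes transparent \emph{why} exactly three conditions appear --- one for each possible dimension of $U\cap\left(\F_2^n\times\{(0,0)\}\right)$.
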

The main result of this section can now be stated as follows. 
\begin{theo}\label{th:outsidein}
	Let $f_1\in\mathcal{B}_{2m}$ be a bent function in the $\cGM_{m+1}$ class. Then, there exist bent functions $f_2,f_3,f_4\in\mathcal{B}_{2m}$ such that  $f=f_1 \vert \vert f_2 \vert \vert f_3 \vert \vert f_4 \in \cB_{2m+2}$ is a bent function in the $\cM^{\#}$ class. Consequently, every function in the $\cGM_{m+1}$ class (even those outside $\cM^{\#}$) is a  part of a function in the $\cM^{\#}$ class in a larger number of variables.
\end{theo}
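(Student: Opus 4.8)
The plan is to keep the \emph{same} mapping $\phi$ for all four functions and to choose $f_2,f_3,f_4$ so that the concatenation $f$ collapses into the \emph{strict} Maiorana--McFarland form, which immediately places $f$ in $\cM^\#$ and guarantees its bentness. Write $f_1(x,y)=x\cdot\phi(y)+h_1(y)$; by Theorem~\ref{theo: GMM+1condition} the fibers $\phi^{-1}(a)$, $a\in\F_2^{m-1}$, are $2$-dimensional affine subspaces (so $\phi$ is $4$-to-$1$) and $h_1$ has odd weight on each fiber. Using the concatenation convention with the two new coordinates $z_1=x_{2m+1}$, $z_2=x_{2m+2}$, one has $f(x,y,z_1,z_2)=x\cdot\phi(y)+H(y,z_1,z_2)$, where $H$ is the multilinear interpolation of $h_1,h_2,h_3,h_4$ in $(z_1,z_2)$, namely $H=h_1+z_1(h_1+h_3)+z_2(h_1+h_2)+z_1z_2(h_1+h_2+h_3+h_4)$. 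The key observation is that choosing $h_4=h_1+h_2+h_3$ (equivalently $\sum_i h_i=0$) kills the quadratic term and makes $H$ \emph{affine} in $(z_1,z_2)$.

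First I would extend $\phi$ to a permutation. Since each fiber $\phi^{-1}(a)$ has exactly $4=|\F_2^2|$ elements, there exist functions $g_1,g_2\colon\F_2^{m+1}\to\F_2$ whose joint restriction $(g_1,g_2)$ maps each fiber bijectively onto $\F_2^2$; then $\pi:=(\phi,g_1,g_2)\colon\F_2^{m+1}\to\F_2^{m+1}$ is a permutation. I then set $h_3=h_1+g_1$, $h_2=h_1+g_2$ and $h_4=h_1+g_1+g_2$, so that the normalization $h_1+h_2+h_3+h_4=0$ holds. Because $(g_1,g_2)$ restricts to a bijection onto $\F_2^2$ on each fiber, each of $g_1$, $g_2$ and $g_1+g_2$ has weight $2$ (hence even) on every fiber, so each $h_i$ inherits the odd-weight property of $h_1$ on every fiber. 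By Theorem~\ref{theo: GMM+1condition}, every $f_i(x,y)=x\cdot\phi(y)+h_i(y)$ is therefore bent, as required.

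It remains to identify $f$. With the above choices $H(y,z_1,z_2)=h_1(y)+z_1 g_1(y)+z_2 g_2(y)$, so that, writing $X=(x,z_1,z_2)\in\F_2^{m+1}$ and $Y=y$,
\[
f(X,Y)=x\cdot\phi(y)+z_1 g_1(y)+z_2 g_2(y)+h_1(y)=X\cdot\pi(Y)+h_1(Y).
\]
Since $\pi$ is a permutation of $\F_2^{m+1}$, this is a function in the strict $\cM$ class on $\F_2^{2m+2}$; hence $f$ is bent and $f\in\cM\subseteq\cM^\#$. Finally, $f_1=f|_{z_1=z_2=0}$ is a restriction of $f$, which proves the ``Consequently'' claim, noting that $f_1$ is an arbitrary (possibly outside $\cM^\#$) bent function of the $\cGM_{m+1}$ class.

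The main obstacle is conceptual rather than computational: recognizing the normalization $\sum_i h_i=0$ that forces the two new variables to enter affinely and thereby reduces $f$ to strict $\cM$. Once this is in place, the only thing to verify is that $\phi$ extends to a permutation, which is immediate because every fiber has size $4=|\F_2^2|$. One could alternatively establish bentness of $f$ via conditions I) and II) of Theorem~\ref{theo: dualofconcatenation} (our choice satisfies $\sum_i h_i=0$, and one checks the odd-number-of-weight-$3$-rows condition on each block $H_a$), but exhibiting the explicit strict $\cM$ representation is cleaner and delivers the $\cM^\#$ membership simultaneously.
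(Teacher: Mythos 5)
Your proof is correct, but it takes a genuinely different route from the paper. The paper keeps the same $\phi$, builds $h_2,h_3,h_4$ fiber-by-fiber through the explicit matrices $H_a$ of Eqs.~\eqref{eq: Ha weight 3} and \eqref{eq: Ha weight 1}, verifies bentness of the concatenation via the dual-sum criterion of Theorem~\ref{theo: dualofconcatenation}, and then deduces membership in $\cM^\#$ by showing that condition (c) of Corollary~\ref{cor: 4concatenationoutsideMM} fails for $V=\F_2^{m-1}\times\{0_{m+1}\}$ and $a=b=0_{2m}$, using $\sum_i h_i=0$. You use the same normalization $\sum_i h_i=0$, but realize it by extending $\phi$ to a permutation $\pi=(\phi,g_1,g_2)$ of $\F_2^{m+1}$ (possible precisely because each fiber has size $4$) and setting $h_2=h_1+g_2$, $h_3=h_1+g_1$, $h_4=h_1+g_1+g_2$; the concatenation then collapses to $X\cdot\pi(Y)+h_1(Y)$, which yields bentness and $\cM^\#$-membership in one stroke, using only Theorem~\ref{theo: GMM+1condition} and none of the concatenation machinery. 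What each approach buys: the paper's argument showcases the $\cM$-subspace/dual-sum framework developed earlier and exhibits the freedom in choosing the $H_a$ blocks, whereas your argument is shorter, self-contained, and conceptually transparent---it shows that any bent $f_1\in\cGM_{m+1}$ is literally a restriction of the Maiorana--McFarland bent function attached to any permutation extension of $\phi$ (in fact the paper's specific $H_a$ choices are special cases of your construction, corresponding to particular bijections $(g_1,g_2)$ on the fibers, and your viewpoint meshes nicely with Proposition~\ref{prop:f_in_MM}). One minor imprecision to fix: as a function of $(x,y,z_1,z_2)$, your $f$ is not literally in the strict $\cM$ class, since the inner-product variables $X=(x,z_1,z_2)$ are not the first $m+1$ coordinates; it becomes strict $\cM$ only after a permutation of the input variables. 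This is harmless---variable permutations are instances of the equivalence in Eq.~\eqref{eq:completed}, so $f\in\cM^\#$ and bentness is preserved---but the intermediate claim ``$f\in\cM$'' should be stated as ``$f$ is linearly equivalent to a function in $\cM$.''
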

\begin{proof}
Let $f_1(x,y)=x \cdot \phi(y)+h_1(y)$, where $x \in \F_2^{m-1}$ and $y \in \F_2^{m+1}$, be a bent function in the $\cGM_{m+1}$ class.
We will define bent functions $f_i(x,y)=x \cdot \phi(y)+h_i(y)$, where $i=2,3,4$, for some appropriately chosen $h_i$, thus of the same form as $f_1$, so that $f=f_1 \vert \vert f_2 \vert \vert f_3 \vert \vert f_4 \in \cB_{2m+2}$ is a bent function in the $\cM^{\#}$ class.

Set $a=0_{2m}$ and $b=0_{2m}$ in Corollary \ref{cor: 4concatenationoutsideMM}-(c) and $V=\F_2^{m-1} \times \{ 0_{m+1} \}$. Then, because $D_vf_i= v_x \cdot \phi (y)$ for $v =(v_x, 0_{m+1}) \in V$, the first part of c) in Corollary \ref{cor: 4concatenationoutsideMM} is satisfied, i.e., $D_vf_i+D_vf_j=0$, for all $v \in V$. The second part of c) in Corollary \ref{cor: 4concatenationoutsideMM} becomes $\sum_{i=1}^4f_i(x,y)= \sum_{i=1}^4h_i(y)$. Hence, from Corollary \ref{cor: 4concatenationoutsideMM} it follows that in order to get a function $f$ in $\cM^{\#}$, we need to find $h_2,h_3$ and $h_4$ such that $\sum_{i=1}^4h_i=0$. Of course, we still need to satisfy the conditions of Theorem \ref{theo: dualofconcatenation}. In order to achieve that we use the observations from Remark \ref{rem: matrixHa}, namely, (using the same notation as in Remark \ref{rem: matrixHa}), we will construct the matrix $H_a$ with the desired properties based on the values of $h_1$ on $\phi^{-1}(a)$, where $a \in \F_2^{m-1}$ (notice that we used  $a=0_{2m}$ before to make a straightforward connection to Corollary 
\ref{cor: 4concatenationoutsideMM}). For $a\in \F_2^{m-1}$, let $\phi^{-1}(a)=\{y_1,y_2,y_3,y_4 \}$. Assume first that the weight of $h_1$ on $\phi^{-1}(a)$ is $3$, w.l.o.g., (because the order is not important) so that $$h_1(y_1)=h_1(y_2)=h_1(y_3)=1; \quad\mbox{and}\quad h_1(y_4)=0.$$ 
Then,  we define $h_2(y_1)=1$ and $h_2(y_2)=h_2(y_3)=h_2(y_4)=0$; $h_3(y_2)=1$ and $h_3(y_1)=h_3(y_3)=h_3(y_4)=0$; and $h_4(y_3)=1$ and $h_4(y_1)=h_4(y_2)=h_4(y_4)=0$. Thus, the matrix $H_a$ becomes 
\begin{equation}\label{eq: Ha weight 3}
	H_a =
	\begin{pmatrix}
		1 & 1 & 1 & 0 \\
		1 & 0 & 0 & 0 \\
		0 & 1 & 0 & 0 \\
		0 & 0 & 1 & 0
	\end{pmatrix},
\end{equation}
and due to the observations in Remark \ref{rem: matrixHa} the conditions of Theorem \ref{theo: dualofconcatenation} are satisfied in this case because the sum of columns is $\begin{pmatrix}
     1 \\ 1 \\ 1 \\ 1
\end{pmatrix}$, the sum of rows is $(0,0,0,0)$ and we have an odd number of rows with 3 ones (one in this case). Notice that it is possible to define $H_a$ in different ways to obtain the same result.

Assume now that the weight of $h_1$ on $\phi^{-1}(a)$ is $1$ and w.l.o.g., $h_1(y_1)=1$ and $h_1(y_2)=h_1(y_3)=h_1(y_4)=0$. Then, we define $h_2(y_2)=1$ and $h_2(y_1)=h_2(y_3)=h_2(y_4)=0$; $h_3(y_3)=1$ and $h_3(y_1)=h_3(y_2)=h_3(y_4)=0$; $h_4(y_1)=h_4(y_2)=h_4(y_3)=1$ and $h_4(y_4)=0$. The matrix $H_a$ becomes 
\begin{equation}\label{eq: Ha weight 1}
	H_a =
	\begin{pmatrix}
		1 & 0 & 0 & 0 \\
		0 & 1 & 0 & 0 \\
		0 & 0 & 1 & 0 \\
		1 & 1 & 1 & 0
	\end{pmatrix},
\end{equation}
and as in the first case, from the observations in Remark \ref{rem: matrixHa} the conditions of Theorem \ref{theo: dualofconcatenation} are satisfied in this case as well.  We notice that the sum of columns is $\begin{pmatrix}
     1 \\ 1 \\ 1 \\ 1
\end{pmatrix}$, the sum of rows is $(0,0,0,0)$ and we have an odd number of rows with 3 ones (one in this case). Consequently, from Theorem \ref{theo: dualofconcatenation} it follows that $f=f_1 \vert \vert f_2 \vert \vert f_3 \vert \vert f_4$ is a bent function.
Since the sum of the rows of $H_a$ is always $(0,0,0,0)$, we deduce that $\sum_{i=1}^4f_i(x,y)= \sum_{i=1}^4h_i(y)=0$ for all $x \in \F_2^{m-1}$ and $y \in \F_2^{m+1}$, and so from Corollary \ref{cor: 4concatenationoutsideMM}-(c) it follows that $f$ is a function in the $\cM^{\#}$ class.
\end{proof}

	\begin{cor}
		For every even $n\ge 8$, there exist bent functions on $\F_2^{n+2}$ that belong to the $\mathcal{M}^\#$ class, whose restrictions to $\F_2^n \times \{ (0,0) \}$ are bent functions outside the $\mathcal{M}^\#$ class.
%		\noindent 2. Every 4-to-1 mapping  $\phi\colon\F_2^{m+1}\to\F_2^{m-1}$ satisfying the conditions of Theorem~\ref{theo: GMM+1condition} is extendable to a permutation of $\F_2^{m+1}$. 
	\end{cor}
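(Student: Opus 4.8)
The plan is to combine Theorem~\ref{th:outsidein} with the existence of bent functions in $\cGM_{m+1}$ that lie outside $\cM^\#$. First I would note that the statement of the corollary is a direct specialization of the main theorem. Write $n=2m$, so the hypothesis $n\ge 8$ becomes $m\ge 4$. The key input is that for every $m\ge 4$ there exists a bent function $f_1\in\mathcal{B}_{2m}$ in the $\cGM_{m+1}$ class with $f_1\notin\cM^\#$. For $m=4$ this is exactly the function exhibited in Example~\ref{ex:different_h}, which is verified to lie in $\cGM_5\setminus\cM^\#$. For general $m\ge 4$ one must argue that such functions exist; this follows either from the proper-partition constructions developed later in the paper, or more elementarily by taking a proper partition of $\F_2^{m+1}$ into $2$-dimensional affine subspaces (as guaranteed by the partition algorithm and the counting in Corollary~\ref{cor: thenumberofh}) together with a function $h_1$ of odd restriction weight on each part, so that the resulting $f_1$ is bent by Theorem~\ref{theo: GMM+1condition} but whose unique $\mathcal{M}$-subspace of dimension $m$ forces $f_1\notin\cM^\#$.

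Given such an $f_1$, I would invoke Theorem~\ref{th:outsidein} verbatim: it produces bent functions $f_2,f_3,f_4\in\mathcal{B}_{2m}$ of the form $f_i(x,y)=x\cdot\phi(y)+h_i(y)$, sharing the same partition $\phi$ as $f_1$, such that $f=f_1\|f_2\|f_3\|f_4\in\mathcal{B}_{2m+2}$ is a bent function in the $\cM^\#$ class. By the concatenation convention adopted in the preliminaries, the restriction of $f$ to $\F_2^{2m}\times\{(0,0)\}$ is precisely $f_1$, which is bent and outside $\cM^\#$. Setting $n=2m$, this yields a bent function on $\F_2^{n+2}$ belonging to $\cM^\#$ whose restriction to $\F_2^n\times\{(0,0)\}$ is a bent function outside $\cM^\#$, which is exactly the assertion of the corollary.

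The only genuine obstacle is the existence claim for $f_1\notin\cM^\#$ at all even $n\ge 8$, i.e.\ all $m\ge 4$; everything else is an immediate application of Theorem~\ref{th:outsidein}. I would handle this by appealing to the proper-decomposition framework of Section~\ref{sec:inclusion}: since $\F_2^{m+1}$ admits proper partitions into $2$-dimensional affine subspaces for every $m\ge 4$, and since Theorem~\ref{theo: GMM+1condition} guarantees a corresponding bent $f_1$, while a proper partition prevents $f_1$ from admitting an $\mathcal{M}$-subspace of dimension $m$ beyond constraints that would place it in $\cM^\#$. One clean way to secure the $m\ge 4$ requirement uniformly is to build a proper partition of $\F_2^{m+1}$ by extending the $m=4$ example: embed the verified proper partition of $\F_2^5$ and complete the remaining coordinates by a trivial partition into cosets, then check that the resulting $f_1$ retains a function outside $\cM^\#$; alternatively one simply cites that such $f_1$ exist for all $m\ge 4$ as established elsewhere in the paper. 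The note $m\ge 4$ is essential, since for $m<4$ every function in $\cGM_{m+1}$ lies in $\cM^\#$, matching the lower bound $n\ge 8$ in the statement.
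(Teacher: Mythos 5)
Your reduction step is exactly the paper's: by Theorem~\ref{th:outsidein}, the corollary follows once one exhibits, for every even $n\ge 8$, a bent function on $\F_2^n$ in $\cGM_{n/2+1}\setminus\cM^\#$. The gap lies in how you establish that existence. For $n=8$ your appeal to Example~\ref{ex:different_h} is fine, since that is a verified instance; but for $n\ge 10$ your argument rests on the claim that a \emph{proper} partition of $\F_2^{m+1}$ into $2$-dimensional affine subspaces, together with an admissible $h_1$, automatically yields $f_1\notin\cM^\#$. This is false, and the paper itself says so: Proposition~\ref{prop: trivialpartition} is only a \emph{sufficient} condition for membership in $\cM^\#$, and Example~\ref{ex:necessity} exhibits a proper partition (i.e., one for which the hypothesis of Proposition~\ref{prop: trivialpartition} fails) whose associated bent function nevertheless lies in $\cM^\#$. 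The statement that all proper partitions produce functions outside $\cM^\#$ is a computer verification special to $\F_2^5$, i.e., to $m=4$ (Section~\ref{subsec:outsideMM}); no such theorem is available for general $m$, so citing ``the proper-partition constructions developed later in the paper'' does not cover $m\ge 5$. Your fallback---embed the $\F_2^5$ partition into $\F_2^{m+1}$, complete by cosets, ``then check that the resulting $f_1$ retains a function outside $\cM^\#$''---leaves precisely the hard step unproven, and properness of the resulting product partition cannot settle it, for the same reason. (A further slip: ``whose unique $\cM$-subspace of dimension $m$ forces $f_1\notin\cM^\#$'' is backwards. By Lemma~\ref{lem M-M second}, a $2m$-variable bent function possessing \emph{any} $m$-dimensional $\cM$-subspace is inside $\cM^\#$; exclusion means the maximal dimension of an $\cM$-subspace is at most $m-1$.)

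What the paper does instead, and what your proof needs, is Theorem~\ref{th:4concatoutsideMM}. Starting from a permutation $\pi$ of $\F_2^m$, $m\ge 3$, with property~\eqref{eq: P1} (such permutations exist for all $m\ge 3$ by \cite[Corollary 23]{PPKZ2023}), the concatenation $f=q\,\vert\vert\,g\,\vert\vert\,q\,\vert\vert\,(g+1)$ with $q(x,y)=x\cdot\pi(y)$ and $g=q\circ A$ is bent and outside $\cM^\#$, and the discussion following Theorem~\ref{th: formofMsubspaces4concatenation} shows that, after composing with the coordinate permutation $B$, this function lies in $\cGM_{\frac{2m+2}{2}+1}$ while remaining outside $\cM^\#$. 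Taking $n=2m+2$ covers every even $n\ge 8$, and Theorem~\ref{th:outsidein} then finishes the argument exactly as you intended. So your outline is repairable, but the existence input must come from this (or some other proved) construction, not from properness of the partition alone.
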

	\begin{proof}
        Theorem~\ref{th:outsidein} implies that, in order to establish the existence of  bent functions on $\F_2^{n+2}$ in $\mathcal{M}^\#$ whose restrictions to $\F_2^n \times \{ (0,0) \}$ are bent functions outside $\mathcal{M}^\#$, it is enough to establish the existence of bent functions on $\F_2^n$ in $\mathcal{GMM}_{\frac{n}{2}+1} \setminus \cM^\#$, for all even $n \geq 8$.
        In Theorem~\ref{th:4concatoutsideMM} the existence of bent functions on $\F_2^n$ in $\mathcal{GMM}_{\frac{n}{2}+1} \setminus \cM^\#$, for all even $n \geq 8$, is established, based on the existence of permutations on $\F_2^m$ with the property~\eqref{eq: P1}, for all integers $m \geq 3$. The existence of such permutations is established in \cite[Corollary 23]{PPKZ2023}.
	\end{proof}

\begin{ex}
	Let $\phi\colon\F_2^5\to\F_2^3$ be a 4-to-1 mapping satisfying the conditions of Theorem~\ref{theo: GMM+1condition}, which is defined by $\phi(y)=(\phi_1(y),\phi_2(y),\phi_3(y))$, where
	\begin{equation}\label{eq: the first phi}
		\begin{split}
			\phi_1(y)=&y_1 + y_1 y_2 y_3 + y_1 y_4 + y_2 y_4 + y_1 y_2 y_4 + y_3 y_4 + y_2 y_3 y_4 \\
			+& y_1 y_5 +
			y_2 y_5 + y_3 y_5 + y_2 y_3 y_5 + y_1 y_4 y_5 + y_2 y_4 y_5,\\
			\phi_2(y)=&y_3 + y_1 y_3 + y_1 y_2 y_3 + y_2 y_4 + y_2 y_3 y_4 + y_1 y_5 + y_2 y_5 + y_1 y_3 y_5 +
			y_3 y_4 y_5,\\
			\phi_3(y)=&y_2 + y_1 y_3 + y_2 y_3 + y_1 y_4 + y_1 y_2 y_4 + y_3 y_4 + y_1 y_3 y_4 + y_2 y_3 y_4\\
			+& y_1 y_5 + y_1 y_2 y_5 + y_3 y_5 + y_2 y_3 y_5 + y_2 y_4 y_5 + y_3 y_4 y_5.
		\end{split}
	\end{equation}
	Define four Boolean functions $h_i\in\mathcal{B}_5$ as in Theorem~\ref{th:outsidein} using the matrix defined by Eq.~\eqref{eq: Ha weight 1}:
		\begin{equation}
			\begin{split}
				h_1(y)=&
				1 + y_1 + y_1 y_3 + y_2 y_3 + y_4 + y_1 y_4 + y_1 y_3 y_4 + y_1 y_2 y_3 y_4 + y_5 + 
				y_1 y_2 y_5 \\
				+& y_3 y_5 + y_1 y_3 y_5 + y_4 y_5 + y_2 y_4 y_5 + y_3 y_4 y_5 + 
				y_1 y_3 y_4 y_5,\\
				h_2(y)=&
				y_1 y_2 + y_2 y_3 + y_1 y_4 + y_3 y_4 + y_5 + y_1 y_5 + y_2 y_5 + y_3 y_5 + 
				y_1 y_3 y_5 + y_2 y_3 y_5 \\
				+& y_4 y_5 + y_2 y_4 y_5 + y_1 y_2 y_4 y_5 + y_2 y_3 y_4 y_5, \\
				h_3(y)=&
				y_1 y_2 y_3 + y_1 y_3 y_4 + y_1 y_2 y_3 y_4 + y_2 y_5 + y_2 y_3 y_5 + y_4 y_5 + 
				y_3 y_4 y_5 + y_1 y_3 y_4 y_5, \\
				h_4(y)=&
				1 + y_1 + y_1 y_2 + y_1 y_3 + y_1 y_2 y_3 + y_4 + y_3 y_4 + y_1 y_5 + y_1 y_2 y_5 + 
				y_4 y_5 \\ 
				+& y_1 y_2 y_4 y_5 + y_2 y_3 y_4 y_5.
			\end{split}
	\end{equation}
    For each $i = 1, 2, 3$, the weight $\operatorname{wt}(h_i)$ is equal to 1 on any 2-dimensional affine subspace $\phi^{-1}(a)$. On the other hand, $\operatorname{wt}(h_4) = 3$ on the same subspaces. Using the above described mapping $\phi$ and the functions $h_i$, define four almost Maiorana-McFarland bent functions outside $\mathcal{M}^\#$ on $\F_2^8$ by $f_i(x,y)=x \cdot \phi(y)+h_i(y)$ (as in Theorem~\ref{th:outsidein}). Then, the resulting bent function $f=f_1 \vert \vert f_2 \vert \vert f_3 \vert \vert f_4$ on $\F_2^{10}$ is in the $\mathcal{M}^\#$ class. Notice that adding the constant all-one function to $f$, the case when $\operatorname{wt}(h_i)=3$, for $i=1,2,3,$ on the affine subspaces $\phi^{-1}(a)$ is also covered.
\end{ex}

\section{Almost Maiorana-McFarland bent functions and their inclusion in $\mathcal{M}^\#$}\label{sec:inclusion}
Even though Theorem \ref{theo: GMM+1condition} specifies the bent condition which is both necessary and sufficient, the main task is to distinguish these bent functions with respect to their inclusion in $\cM^\#$. In this context, we will provide a  criterion that must not be fulfilled if  a bent function $f \in \cGM_{m+1}$ is supposed to be outside $\cM^\#$. Moreover, by selecting proper partitions of the space $\F_2^{m+1}$ into disjoint 2-dimensional affine subspaces, we explicitly specify at least $2^{78}$ bent functions in $n=2m=8$ variables  in  $ \cGM_{m+1}$ that are outside $\cM^\#$, which is larger than the cardinality of bent functions in $\cM^\#$. We also provide an explicit design method of bent functions outside $\cM^\#$, based on the concatenation of four bent functions.
\subsection{Bent functions in $\mathcal{GMM}_{\frac{n}{2}+1}$ that belong to $\mathcal{M}^\#$}\label{subsec:bentinMM}
We now demonstrate that the decomposition of $\F_2^{m+1}$ into 2-dimensional affine subspaces must be performed carefully, since otherwise the resulting 
bent functions may belong to the $\cM^\#$ class and are not interesting objects from our perspective.
\begin{prop}\label{prop: trivialpartition}
Let $n=2m$ and let $f \in \cB_n$ be a bent function in the $\cGM_{m+1}$ class, thus satisfying the conditions in Theorem \ref{theo: GMM+1condition},  defined by 
\begin{eqnarray}\label{eq:GMM+1second}
f(x,y)=x\cdot\phi(y) +
h(y), \;\; x \in \F_2^{m-1}, y \in \F_2^{m+1},
\end{eqnarray}
where $\phi
 \colon \F_2^{m+1} \rightarrow \F_2^{m-1}$  and  $h \colon \F_2^{m+1} \rightarrow \F_2$.
Assume that there exists a  non-zero element $v \in \F_2^{m+1}$, such that, for all $z \in \F_2^{m-1}$, we have that $v \in w_z + \phi^{-1}(z)$, for some $w_z \in \phi^{-1}(z)$. Then, the function $f$ is in the (standard) completed Maiorana-McFarland class.
\end{prop}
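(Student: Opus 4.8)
The plan is to exhibit an explicit $m$-dimensional $\mathcal{M}$-subspace of $f$ and then invoke Dillon's criterion (Lemma~\ref{lem M-M second}). First I would recast the hypothesis geometrically. Since $w_z \in \phi^{-1}(z)$ and, by Theorem~\ref{theo: GMM+1condition}, each $\phi^{-1}(z)$ is a $2$-dimensional affine subspace, the set $w_z + \phi^{-1}(z)$ equals the $2$-dimensional linear \emph{direction space} of $\phi^{-1}(z)$ (in characteristic two, $w_z + (w_z + W_z) = W_z$). Thus the assumption says that one fixed nonzero $v$ lies in the direction space of \emph{every} block $\phi^{-1}(z)$; equivalently, each block is invariant under the translation $y \mapsto y + v$. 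In terms of $\phi$ this is exactly $\phi(y+v) = \phi(y)$ for all $y \in \F_2^{m+1}$, because $y$ and $y+v$ then always lie in the same block.

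Next I would enlarge the canonical $\mathcal{M}$-subspace using the direction $(0_{m-1}, v)$. Set
\[
U = \big(\F_2^{m-1} \times \{0_{m+1}\}\big) + \langle (0_{m-1}, v)\rangle \subseteq \F_2^{2m}.
\]
Since $v \neq 0_{m+1}$, the vector $(0_{m-1}, v)$ does not lie in $\F_2^{m-1}\times\{0_{m+1}\}$ (which has dimension $m-1$), so $\dim U = m$. It remains to verify that $D_aD_bf = 0$ for all $a,b \in U$. Writing $a = (a_x, \varepsilon_a v)$ and $b = (b_x, \varepsilon_b v)$ with $\varepsilon_a, \varepsilon_b \in \F_2$, the invariance $\phi(y+\varepsilon v) = \phi(y)$ collapses the first-order derivative to $D_bf(x,y) = b_x\cdot\phi(y) + \varepsilon_b\, D_v h(y)$, decoupling the $x$-dependent part from the $y$-shift. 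Differentiating once more then yields $D_aD_bf = \varepsilon_b\, D_{\varepsilon_a v}D_v h(y)$, which vanishes in every case: if $\varepsilon_a = 0$ because $D_0 \equiv 0$, and if $\varepsilon_a = 1$ because $D_vD_v h = 0$ holds for every $h$ in characteristic two.

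The only genuine obstacle is the first step, namely recognizing that the stated combinatorial condition ``$v$ lies in every block's direction space'' is precisely the algebraic periodicity $\phi(y+v)=\phi(y)$ that renders the derivative of $f$ along $(0_{m-1},v)$ independent of $x$. Once this is identified, the verification that $U$ is an $\mathcal{M}$-subspace is a short, purely mechanical computation. With an $m$-dimensional $\mathcal{M}$-subspace of $f$ in hand, Lemma~\ref{lem M-M second} immediately places $f$ in the completed Maiorana--McFarland class $\mathcal{M}^\#$, which completes the argument.
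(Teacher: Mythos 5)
Your proposal is correct and follows essentially the same route as the paper: you exhibit the same $m$-dimensional subspace $\langle \F_2^{m-1}\times\{0_{m+1}\},\,(0_{m-1},v)\rangle$, derive the key periodicity $\phi(y+v)=\phi(y)$ from the fact that $v$ lies in the direction space of every block $\phi^{-1}(z)$, and conclude via Dillon's criterion (Lemma~\ref{lem M-M second}). The only cosmetic difference is that you verify the vanishing of $D_aD_bf$ for arbitrary pairs in the subspace, whereas the paper checks the generating cases; both are complete.
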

\begin{proof}
Denote by $V$ the subspace $\langle \F_2^{m-1} \times \{0_{m+1}\}, (0_{m-1},v) \rangle$. Since $v$ is non-zero, the dimension of the subspace $V$ is $m$. We will show that for all $a,b \in V$, we have $D_aD_bf=0$. Firstly, from Eq.~\eqref{eq:GMM+1second} it is easy to see that if $a,b \in \F_2^{m-1} \times \{0_{m+1}\}$, we have $D_aD_bf=0$. Assume now that $a=(0_{m-1},v)$ and $b=(b_1,0_{m+1})$, for some $b_1\in \F_2^{m-1}$. We compute
\begin{align*}
D_aD_bf(x,y)&= D_{(0_{m-1},v)}(D_{(b_1, 0_{m+1})}(x\cdot\phi(y) +
h(y)) \\
&=D_{(0_{m-1},v)}(b_1 \cdot \phi(y)) = b_1 \cdot (\phi(y)+\phi(y + v)).
\end{align*}
We will show that $\phi(y)+\phi(y + v)=0_{m-1}$, for all $y \in \F_2^{m+1}$. Let $z \in \F_2^{m-1}$ be such that $\phi(y)=z$. From Theorem \ref{theo: GMM+1condition}, we know that $\phi^{-1}(z)$ is an affine subspace. Let $w_z \in \phi^{-1}(z)$ be a vector such that $v$ is in $w_z + \phi^{-1}(z)$. Since $\phi^{-1}(z)$ is an affine subspace, we can write it as $w_z + W$  for some $2$-dimensional vector subspace $W \subset \F_2^{m+1}$. It follows that $v \in W$, and consequently $y+v$ also belongs to $\phi^{-1}(z)$, i.e., $\phi(y+v)=z$, so $\phi(y)+\phi(y + v)=0_{m-1}$. It follows that $D_aD_bf(x,y)=0$ in this case as well, so we deduce that for all $a,b \in V$, we have $D_aD_bf=0$, which concludes the proof. 
\end{proof}
\begin{defi}\label{def: properdec}
		A partition of $\F_2^{m+1}$ into 2-dimensional affine subspaces $\{ \phi^{-1}(a) \mid a \in \F_2^{m-1} \}$  will be called {\em non-proper} when it satisfies the conditions given in Proposition \ref{prop: trivialpartition}, otherwise it is termed as {\em proper}.
	\end{defi}
\begin{ex}\label{ex:notsatisfyingMMcond}
	The  partition of $\F_2^5$ w.r.t.\  $\phi^{-1}(a)$ considered in  Example \ref{ex:different_h} is proper, i.e., it does not satisfy the condition given in Proposition \ref{prop: trivialpartition}. Thus, there does not exist a nonzero $v \in \F_2^5$ with the property that for every $a \in \F_2^3$ we have that $v \in w_a + \phi^{-1}(a)$, for some $w_a \in \phi^{-1}(a)$. 
\end{ex}

\begin{cor}\label{cor:insideMM}
Let $n=2m$ and let $f \in \cB_n$ be a bent function in the $\cGM_{m+1}$ class defined by 
\begin{eqnarray}\label{eq:GMM+1second2}
f(x,y)=x\cdot\phi(y) +
h(y), \;\; x \in \F_2^{m-1}, y \in \F_2^{m+1},
\end{eqnarray}
where $\phi
 \colon \F_2^{m+1} \rightarrow \F_2^{m-1}$  and  $h \colon \F_2^{m+1} \rightarrow \F_2$.
Assume that $\{ \phi^{-1}(a) \mid a \in \F_2^{m-1} \}$ is a {\em trivial partition} of $\F_2^{m+1}$ into $2$-dimensional affine subspaces, i.e., assume that there exists a $2$-dimensional linear subspace $W \subset \F_2^{m+1}$ such that  $\{ \phi^{-1}(a) \mid a \in \F_2^{m-1} \} = \{ c + W \mid c \in \F_2^{m+1} \}$, where $\F_2^{m+1} =\bigcup_{c \in Q}(c+W)$ for the set of coset representative $Q$ with $|Q|=2^{m-1}$. Then, the function $f$ has at least $3$ different $\cM$-subspaces of dimension $m$.
\end{cor}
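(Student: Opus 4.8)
The plan is to apply Proposition~\ref{prop: trivialpartition} once for each of the three nonzero vectors of the common direction space $W$, producing three $\cM$-subspaces of dimension $m$, and then to verify that these three subspaces are pairwise distinct.

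The first step is to reformulate the hypothesis of Proposition~\ref{prop: trivialpartition} in the present setting. Writing $\phi^{-1}(a) = c_a + W$ for each $a \in \F_2^{m-1}$ (which is exactly what the trivial partition hypothesis asserts), the condition \emph{``$v \in w_a + \phi^{-1}(a)$ for some $w_a \in \phi^{-1}(a)$''} simplifies: for any $w_a = c_a + w \in \phi^{-1}(a)$ with $w \in W$ one computes $w_a + \phi^{-1}(a) = w + W = W$, so the condition is equivalent to $v \in W$. Since every preimage now shares the single direction $W$, \emph{every} nonzero $v \in W$ simultaneously satisfies the hypothesis of Proposition~\ref{prop: trivialpartition} for all $a \in \F_2^{m-1}$. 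This is the decisive feature of the trivial partition, in contrast to the general situation of that proposition, where only a single, specially located $v$ need lie in the direction of every preimage.

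Next, $W$ is $2$-dimensional, so it has exactly three nonzero elements $v_1, v_2, v_3 = v_1 + v_2$. For each $v_i$, the construction in the proof of Proposition~\ref{prop: trivialpartition} yields the subspace
\[
V_i = \langle \F_2^{m-1} \times \{0_{m+1}\}, (0_{m-1}, v_i) \rangle,
\]
on which $D_a D_b f = 0$ for all $a,b \in V_i$; moreover $\dim V_i = m$ because $(0_{m-1}, v_i)$ has nonzero $y$-component and hence lies outside $\F_2^{m-1} \times \{0_{m+1}\}$. Thus each $V_i$ is an $m$-dimensional $\cM$-subspace of $f$.

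Finally, I would check distinctness. Each $V_i$ consists precisely of the vectors $(x, \lambda v_i)$ with $x \in \F_2^{m-1}$ and $\lambda \in \F_2$; in particular $(0_{m-1}, v_i) \in V_i$, whereas $(0_{m-1}, v_i) \in V_j$ would force $v_i = \lambda v_j \in \{0_{m+1}, v_j\}$ and hence $v_i = v_j$. Since $v_1, v_2, v_3$ are pairwise distinct and nonzero, the subspaces $V_1, V_2, V_3$ are pairwise distinct, which gives the three required $\cM$-subspaces of dimension $m$. The whole argument is essentially bookkeeping layered on top of Proposition~\ref{prop: trivialpartition}; the only points deserving care are the reformulation of its hypothesis as the membership $v \in W$ and the distinctness verification, and I do not expect either to present a genuine obstacle.
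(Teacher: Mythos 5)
Your proposal is correct and takes essentially the same route as the paper: both observe that every nonzero $v \in W$ satisfies the hypothesis of Proposition~\ref{prop: trivialpartition} (taking $w_z = c$ for $\phi^{-1}(z) = c+W$), and then invoke the construction from that proposition's proof to obtain the three $m$-dimensional $\cM$-subspaces $\langle \F_2^{m-1} \times \{0_{m+1}\}, (0_{m-1},v_i) \rangle$ for the three nonzero $v_i \in W$. The only difference is that you verify the pairwise distinctness of these subspaces explicitly, a point the paper leaves implicit.
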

\begin{proof}
We notice that any nonzero element  $v \in W$ satisfies the condition in Proposition \ref{prop: trivialpartition} that $v \in w_z + \phi^{-1}(z)$, by taking $w_z=c$ and $\phi^{-1}(z)=c + W$. In a similar way as in the proof of Proposition \ref{prop: trivialpartition}, it follows that $\langle \F_2^{m-1} \times \{0_{m+1}\}, (0_{m-1},w) \rangle$, for any non-zero $w \in W$, is an $m$-dimensional $\cM$-subspace for $f$, and the result follows. 
\end{proof}

However, as the following example demonstrates, the condition for inclusion in the $\cM^\#$ class given in Proposition \ref{prop: trivialpartition} is sufficient but not necessary.
\begin{ex}\label{ex:necessity}
	Let $\phi\colon\F_2^6\to\F_2^4$ be the mapping defined by $\phi(y)=(\phi_1(y),\phi_2(y),\phi_3(y),\phi_4(y))$, where, for $y=(y_1, y_2, \ldots ,y_6) \in \F_2^6,$
    \begin{equation*}
        \phi(y)= ( y_2,  y_2 y_4 + y_1 y_3 +y_5 + y_2 y_5,  y_1 y_2 + y_2 y_6 + y_6, y_1 + y_2 y_5 + y_1 y_2 ),
    \end{equation*}
%	\begin{equation*}
%		\begin{split}
%			\phi_1(y)= & y_2, \\
%			\phi_2(y)= & y_2 y_4 + y_1 y_3 +y_5 + y_2 y_5,\\
%			\phi_3(y)= & y_1 y_2 + y_2 y_6 + y_6,\\
%			\phi_4(y)= & y_1 + y_2 y_5 + y_1 y_2, \\
%		\end{split}
%	\end{equation*} 
	and let $h\in\mathcal{B}_6$ be defined by $h(y) = y_3 y_4 + y_2 y_3 y_4 + y_2 y_3 y_6$. Then, the collection $\left\lbrace \phi^{-1}(a) \mid a \in \F_2^{4} \right\rbrace$ is a partition of $\F_2^{6}$ into $2$-dimensional affine subspaces and the restriction of $h$ on the set $\phi^{-1}(a)$ has odd weight, for every $a\in\F_2^{4}$. Consequently, the function $f \in \mathcal{B}_{10}$ defined by $f(x,y)=x \cdot \phi(y) + h(y)$, for $x \in \F_2^4$ and $y \in \F_2^6$, is bent by Theorem~\ref{theo: GMM+1condition}. By computing the preimages 
    \begin{equation*}
        \begin{split}
            \phi^{-1}(0,0,0,0) = (0,0,0,0,0,0) + \{
            (0,0,0,0,0,0), (0,0,0,1,0,0), (0,0,1,0,0,0), (0,0,1,1,0,0) \}, \\ 
            \phi^{-1}(1,1,1,1) =(1,1,0,1,1,0) + \{ (0,0,0,0,0,0),  (0,0,0,0,0,1),  (0,0,1,1,0,0), (0,0,1,1,0,1) \}, \\
            \phi^{-1}(1,0,0,0) =(0,1,0,0,0,0) + \{ (0,0,0,0,0,0),  (0,0,0,0,0,1),  (0,0,1,0,0,0), (0,0,1,0,0,1) \},
        \end{split}
    \end{equation*}
    we deduce that the mapping $\phi$ does not satisfy the conditions in Proposition~\ref{prop: trivialpartition}. 
    However, by a permutation of variables we obtain
\begin{equation*}
    \begin{split}
        f'(x_1', x_2', x_3', x_4', x_5', y_1', y_2', y_3', y_4', y_5') := f(x_1', y_3', y_4', y_5', x_5', y_1', y_2', x_2', x_3', x_4') = x' \cdot \phi'(y'), \textit{ where }\\ 
        \phi'(y') = ( y_1' , y_2' + y_1' y_2' + y_1' y_3',  y_3' + y_1' y_3' + y_1' y_5', y_1' y_2' + y_4' + y_1' y_4',  y_1' y_4' + y_5' + y_1' y_5' + y_2' y_3'),
    \end{split}
\end{equation*}

for $x'=(x_1', x_2', x_3', x_4', x_5')$ and $y'= (y_1', y_2', y_3', y_4', y_5')$ in $\F_2^5$.
The function $f'$ is clearly in the $\cM$ class, and since $f$ is affine equivalent to $f'$, it follows that $f$ is in the $\cM^\#$ class.
\end{ex}

As originally addressed in \cite{BaumNeuwirth} and for instance later considered in \cite{VanishingFlats2020}, there exist many non-trivial decompositions of vector spaces. For instance, the decomposition of $\F_2^5$ given in Example \ref{ex:different_h} is 8-directional in the sense that for any $i \neq j$ we cannot represent 
$A_i=a_i + A$ and $A_j=a_j +A$ for the same 2-dimensional linear subspace $A$. 
The following remark and example demonstrate that there exist non-trivial partitions, i.e., distinct from the ones described in Corollary~\ref{cor:insideMM}, which also meet the criterion established in Proposition~\ref{prop: trivialpartition}, thereby leading to $f \in \cM^\#$.

%{\color{blue}
\begin{rem}\label{rem:nontrivialpart}
    Note that any trivial partition of $\mathbb{F}_2^{m+1}$ is obviously non-proper. Non-trivial non-proper partitions of $\mathbb{F}_2^{m+1}$ into $2$-dimensional affine subspaces can be obtained as follows.
	Write $\F_2^{m+1}$ as $\F_2^k \times \F_2^t$, for some $k$ and $t$, and let $\{A_i \}_{i \in I}$, $I=\{1, \ldots , 2^{k-1} \}$, be a trivial partition of $\F_2^k$ into $1$-dimensional affine subspaces and $\{B_j \}_{j \in J}$, $J=\{1, \ldots , 2^{t-1} \}$, a non-trivial partition of $\F_2^t$ into $1$-dimensional affine subspaces. 
		Then, the product $\{ A_i \times B_j \}_{i \in I, j \in J}$ of the two partitions is a partition of $\F_2^{m+1}$ into $2$-dimensional affine subspaces which is non-trivial, and such that there is one non-zero vector that is in all the corresponding subspaces of the partition, i.e., the condition of Proposition \ref{prop: trivialpartition} is satisfied. Since any two-element subset of $\F_2^t$ is a $1$-dimensional affine subspace, we can easily obtain non-trivial partitions of $\F_2^t$ into $1$-dimensional affine subspaces (for $t\geq 3$).

        In a similar way, writing $\F_2^{m+1}$ as $\F_2^k \times \F_2^t$ and taking non-trivial partitions of both $\F_2^k$ and $\F_2^t$ into $1$-dimensional affine subspaces and taking the product of the partitions, we obtain proper partitions of $\F_2^{m+1}$ into $2$-dimensional affine subspaces, which can then potentially be used to produce functions outside the $\cM^\#$ class.
\end{rem}
%}

\begin{ex}\label{ex:nontrivialpart}
    Consider the following partition of $\F_2^4$ into four $2$-dimensional affine subspaces:
	\begin{align*}
	\{ (0,0,0,0), (1,0,0,0), (0,1,0,0), (1,1,0,0) \}=(0,0,0,0)+   \langle (1,0,0,0), (0,1,0,0)\rangle, \\
	\{ (0,0,1,0), (1,0,1,0), (0,0,1,1), (1,0,1,1) \}=(0,0,1,0)+   \langle (1,0,0,0), (0,0,0,1)\rangle, \\
	\{ (0,0,0,1), (1,0,0,1), (0,1,0,1), (1,1,0,1) \}= (0,0,0,1)+   \langle (1,0,0,0), (0,1,0,0) \rangle, \\
	\{ (0,1,1,0), (1,1,1,0), (0,1,1,1), (1,1,1,1) \} =(0,1,1,0)+   \langle (1,0,0,0), (0,0,0,1)\rangle .
	\end{align*}
	Since in the partition we have cosets of two different subspaces of $\F_2^4$, namely $\langle (1,0,0,0), (0,1,0,0) \rangle $ and $\langle (1,0,0,0), (0,0,0,1) \rangle$, the partition is not trivial. However, the vector $(1,0,0,0)$ lies in both subspaces, hence the conditions of Proposition \ref{prop: trivialpartition} are again satisfied (for all  $\phi \colon  \F_2^4 \to \F_2^2$ corresponding to this partition) and  $f(x,y)=x \cdot \phi(y) + h(y)$ is in $\cM^\#$, for all $h \in \cB_4$ for which $f$ is bent. In particular, as explained in Remark \ref{rem:nontrivialpart} the partition of $\F_2^4$ presented above is obtained as a product $\{ \{0,1 \} \times B_j \}_{j=1}^4$ of the trivial partition $\{0,1\}$ of $\F_2$ and the following non-trivial partition $\{ B_j \}_{j=1}^4$ of $\F_2^3$
		\begin{align*}
		B_1= \{ (0,0,0),(1,0,0) \}= (0,0,0)+\{ (0,0,0),(1,0,0) \}, \\ B_2= \{ (0,1,0),(0,1,1) \} = (0,1,0)+ \{ (0,0,0),(0,0,1) \}, \\
		B_3= \{ (0,0,1),(1,0,1) \} = (0,0,1)+ \{ (0,0,0),(1,0,0)\} , \\ B_4 =\{ (1,1,0),(1,1,1) \} = (1,1,0)+\{ (0,0,0),(0,0,1) \} . 
		\end{align*}
\end{ex}

\subsection{Specifying $>2^{78}$ different bent  functions  in $\mathcal{GMM}_{\frac{n}{2}+1}$ outside $\mathcal{M}^\#$}\label{subsec:outsideMM}

Despite the possibility of deriving proper partitions using the ideas described in Remark~\ref{rem:nontrivialpart}, a more general approach can be used.
 For small values of $n$, one can construct partitions of $\mathbb{F}_2^n$ into $2^{n-2}$ two-dimensional affine subspaces as follows. Let $G = (V, E)$ be a graph with vertex set $V$ and edge set $E$. The vertices of $G$ represent two-dimensional affine subspaces, and edges are defined as follows: two vertices $V_i$ and $V_j$ are connected by an edge $E_{i,j}$ if and only if $V_i \cap V_j = \varnothing$. A partition of $\mathbb{F}_2^n$ into $2^{n-2}$ two-dimensional affine subspaces corresponds to a clique of size $2^{n-2}$ in $G$. 
 For example, in the case $n = 5$, which is particularly important for the construction of bent functions in 8 variables, one can construct $2^{20}$ such partitions using Wolfram Mathematica in a matter of a few minutes on a single-core computer.
		
 Although it is possible to derive proper partitions using the approach outlined in Remark~\ref{rem:nontrivialpart} or the graph based method above, a straightforward way to obtain partitions of $\mathbb{F}_2^{m+1}$ involves the following procedure:

		\begin{enumerate}%[a)]
			\item 	Select $A_1= \{0_{m+1}, a_0, b_0, a_0+b_0\}$ which  is a linear subspace of $\F_2^{m+1}$. 
			
			\item 	Then, choose $a_1,b_1,c_1\in \F_2^n\setminus A_1$ and define $A_2=a_1+\{0_{m+1}, b_1+a_1, c_1+a_1, b_1+c_1\}$ which is an  affine subspace of $\F_2^{m+1}$.
			
			\item Continue with  selecting  $a_i,b_i,c_i$ from  $\F_2^{m+1}\setminus  \cup_{j=1}^{i}A_j$ and define $A_{i+1}=\{a_i,b_i,c_i, a_i+b_i+c_i\}$ until all ($2^{m-1}$ in number)  2-dimensional flats $A_i$ are constructed.

		\end{enumerate}  

		We have implemented this procedure and found $4\,960$ different decompositions of $\F_2^5$ into disjoint 2-dimensional affine subspaces, among which there are $3\,785$ proper ones so that the condition in Proposition \ref{prop: trivialpartition} is not satisfied. From each  partition $\Pi=\{A_1,\ldots,A_8\}$, we construct a 4-to-1 mapping $\phi\colon\F_2^5\to\F_2^3$ as $\phi(A_i)=i-1$, for $1\le i\le8$, where every integer $i-1$ is identified with its (3 bit) binary representation. For the constructed mapping $\phi$, we select randomly a Boolean function $h\in\mathcal{B}_5$ such that $f(x, y) = x \cdot \phi(y) + h(y)$ is bent on $\F_2^8$. Using this randomized approach, we obtained $3\,785$ bent functions outside $\mathcal{M}^\#$ class. Below, we give an example of one such bent function (here we use the mapping $\phi\colon\F_2^5\to\F_2^3$ defined by Eq.~\eqref{eq: the first phi}):

		\begin{polynomial}
			f(x,y)=x\cdot\phi(y)+h(y)=x_1 (x_4 + x_4 x_5 x_6 + x_4 x_7 + x_5 x_7 + x_4 x_5 x_7 + x_6 x_7 + x_5 x_6 x_7 + x_4 x_8 + x_5 x_8 + x_6 x_8 + x_5 x_6 x_8 + x_4 x_7 x_8 + x_5 x_7 x_8) + x_2 (x_6 + x_4 x_6 + x_4 x_5 x_6 + x_5 x_7 + x_5 x_6 x_7 + x_4 x_8 + x_5 x_8 + x_4 x_6 x_8 + x_6 x_7 x_8) + x_3 (x_5 + x_4 x_6 + x_5 x_6 + x_4 x_7 + x_4 x_5 x_7 + x_6 x_7 + x_4 x_6 x_7 + x_5 x_6 x_7 + x_4 x_8 + x_4 x_5 x_8 + x_6 x_8 + x_5 x_6 x_8 + x_5 x_7 x_8 + x_6 x_7 x_8) + x_4 x_5 + x_5 x_6 + x_4 x_5 x_6 + x_4 x_7 + x_5 x_7 + x_4 x_6 x_7 + x_4 x_5 x_6 x_7 + x_4 x_8 + x_5 x_8 + x_6 x_8 + x_4 x_6 x_8 + x_5 x_6 x_8 + x_4 x_5 x_6 x_8 + x_7 x_8 + x_4 x_7 x_8 + x_4 x_5 x_7 x_8 + x_6 x_7 x_8,
		\end{polynomial}
	\noindent	where $y:=(x_4,x_5,\ldots,x_8)$. 
	 Using Magma, we have verified that among the constructed $3\,785$ bent functions outside the $\mathcal{M}^\#$ class, there are at least 588 EA-inequivalent ones. We note that this number exceeds the number of EA-equivalence classes of Maiorana-McFarland bent functions, which was recently determined in~\cite{LangevinPolujan2024BFA} and is equal to 325. By computing the orders of the automorphism groups of EA-inequivalent 588 bent functions and applying the orbit-stabilizer theorem, as outlined in \cite[Theorem 2]{LangevinPolujan2024BFA}, we obtain at least $2^{78}$ different bent functions outside $\mathcal{M}^\#$. 

\subsection{Bent functions in $\mathcal{GMM}_{\frac{n}{2}+1}$ outside $\mathcal{M}^\#$ through concatenation}\label{sec:GMM1outsideMM}

We now describe an explicit method of designing  bent functions in $\mathcal{GMM}_{\frac{(2m+2)}{2}+1}$ class outside $\cM^\#$, for $m \geq 3$, using recent results given in \cite{ISIT-IEEE2024}. We first recall one important property called $(P_1)$  introduced in \cite{PPKZ2023} which regards a class of permutations $\pi$ on $\F_2^m$ for which 
\begin{equation}\label{eq: P1} \tag{$P_1$}
D_vD_w\pi\neq0_m \mbox{ for all linearly independent } v,w\in\F_2^m.
\end{equation}
Let $\pi$ be a permutation of $\F_2^m$ with the property $(P_1)$, and let $q \colon \F_2^{2m} \to \F_2$ be the function defined by $q(x,y)=x \cdot \pi(y)$. Then, the only $m$-dimensional $\cM$-subspace of $q$ is $\F_2^m \times \{0_m \}$, by Proposition III.5 in \cite{PPKZ2023}. Let $A$ be any linear permutation of $\F_2^{2m}$ which fixes some $(m-1)$-dimensional subspace of $\F_2^m \times \{0_m \}$, but does not fix the whole subspace $\F_2^m \times \{0_m \}$.
For simplicity, we will assume that 
\begin{equation} \label{eq:matrixA} 
A(x_1, \dots ,x_{m-1},x_{m},y_1, y_2 ,\dots ,y_m)= (x_1, \dots ,x_{m-1},y_{1},x_m, y_2 ,\dots ,y_m).
\end{equation}
Let $g\colon\F_2^{2m} \to \F_2$ be the function defined by $g=q \circ A$. The following result, given in \cite{ISIT-IEEE2024}, will be useful for our purpose.

\begin{theo}\label{th: insideMMgh(5valued)} \cite{ISIT-IEEE2024} Let $h$ and $g$ be two arbitrary bent functions in $\mathcal{B}_n$. Then, the function $f=f_1 \vert \vert f_2 \vert \vert f_3 \vert \vert f_4 \colon \F_2^{n+2} \to \F_2$, where $f_1=f_3=g$ and $f_2=f_4+1=h$ is a bent function in the $\cM^{\#}$ class if and only if the functions $g$ and $h$ have a common $(n/2)$-dimensional $\cM$-subspace, thus $g, h \in \cM^\#$.
\end{theo}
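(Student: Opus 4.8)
The plan is to compute directly with the algebraic normal form of $f$ and then invoke Dillon's criterion (Lemma~\ref{lem M-M second}). Denoting the two appended coordinates by $z_1,z_2$, the concatenation rule (here $f_1=f_3=g$, $f_2=h$, $f_4=h+1$) interpolates to $f(x,z_1,z_2)=g(x)+z_2\,p(x)+z_1z_2$ with $p:=g+h$. A one-line Walsh computation then gives $W_f(\omega,c_1,c_2)=2W_g(\omega)$ for $c_1=0$ and $2(-1)^{c_2}W_h(\omega)$ for $c_1=1$, so $f$ is bent for \emph{every} bent pair $g,h$ (equivalently $f_1^*+f_2^*+f_3^*+f_4^*=g^*+h^*+g^*+(h^*+1)=1$, the criterion of~\cite{SHCF}). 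Hence the theorem is really a statement about membership in $\cM^\#$, which by Lemma~\ref{lem M-M second} amounts to the existence of an $(n/2+1)$-dimensional $\cM$-subspace $U\subseteq\F_2^{n+2}$. The single identity driving everything is the second-order derivative: for $u=(a,s_1,s_2)$ and $w=(b,t_1,t_2)$,
\[
D_uD_wf=D_aD_bg+z_2\,D_aD_bp+t_2\,(D_ap)(x+b)+s_2\,(D_bp)(x+a)+s_1t_2+s_2t_1,
\]
and since $z_2$ is a free coordinate this vanishes iff $D_aD_bp=0$ (the coefficient of $z_2$) together with the vanishing of the $z$-free remainder.

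For the ``if'' direction I would argue as follows. Suppose $g$ and $h$ share a common $(n/2)$-dimensional $\cM$-subspace $V\subseteq\F_2^n$, i.e.\ $D_aD_bg=D_aD_bp=0$ on $V$. Then $U=(V\times\{(0,0)\})+\langle(0_n,1,0)\rangle$ has dimension $n/2+1$, all of its elements satisfy $s_2=t_2=0$, and the identity above collapses to $D_aD_bg=0$; thus $U$ is an $\cM$-subspace of $f$ and $f\in\cM^\#$.

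For the ``only if'' direction, assume $f\in\cM^\#$ and fix an $(n/2+1)$-dimensional $\cM$-subspace $U$. Let $\rho\colon\F_2^{n+2}\to\F_2^2$ be the projection onto $(z_1,z_2)$, set $U_0=U\cap(\F_2^n\times\{(0,0)\})$, and let $V_0\subseteq\F_2^n$ be its faithful image, so $\dim V_0=(n/2+1)-\dim\rho(U)$. Pairing elements of $U_0$ among themselves shows $V_0$ is a common $\cM$-subspace of $g$ and $p$, hence of $h$. When $\dim\rho(U)\le1$ we get $\dim V_0\ge n/2$; since $g$ is bent its $\cM$-subspaces have dimension at most $n/2$, forcing $\dim V_0=n/2$ and finishing this case. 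When $\rho(U)=\F_2^2$ we have $\dim V_0=n/2-1$ and must gain one dimension: choosing $u_1=(\alpha,1,0)\in U$ and pairing it with $U_0$, the identity yields $D_cD_\alpha g=D_cD_\alpha p=0$ for every $c\in V_0$, so $\langle V_0,\alpha\rangle$ is a common $\cM$-subspace of $g$ and $h$, of dimension $n/2$ provided $\alpha\notin V_0$.

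I expect the only genuinely subtle point to be excluding $\alpha\in V_0$ in this last case, and the resolution rests on the observation that $D_{(0_n,1,0)}f=z_2$. Indeed, if $\alpha\in V_0$ then $(0_n,1,0)=u_1+(\alpha,0,0)\in U$, and pairing it with any $u\in U$ whose $z_2$-component equals $1$ (such a $u$ exists because $\rho(U)=\F_2^2$) gives $D_uD_{(0_n,1,0)}f=D_u(z_2)=1\neq0$, contradicting that $U$ is an $\cM$-subspace. Hence $\alpha\notin V_0$, the extension succeeds, and in all cases $g$ and $h$ share a common $(n/2)$-dimensional $\cM$-subspace; since a bent function admitting an $\cM$-subspace of dimension $n/2$ lies in $\cM^\#$, this also yields $g,h\in\cM^\#$ and completes the argument. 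The remainder is routine bookkeeping with the derivative identity.
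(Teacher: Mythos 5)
Your proof is correct, and it necessarily takes a different route from the paper, because the paper does not prove Theorem~\ref{th: insideMMgh(5valued)} at all: it imports it from \cite{ISIT-IEEE2024}, where it follows from the general five-case characterization of $\cM$-subspaces of bent concatenations (reproduced in this paper as Theorem~\ref{th: formofMsubspaces4concatenation}). Your argument replaces that machinery with the special structure of this particular concatenation: the interpolation $f=g+z_2p+z_1z_2$ with $p=g+h$, the Walsh computation showing $f$ is bent for \emph{every} bent pair (equivalently $g^*+h^*+g^*+(h^*+1)=1$, the criterion of \cite{SHCF}), and a single second-order-derivative identity, after which Dillon's criterion (Lemma~\ref{lem M-M second}) does all the work. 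I checked the interpolation, the Walsh values, the derivative identity, both directions, and the exclusion of $\alpha\in V_0$ via $D_{(0_n,1,0)}f=z_2$; all are sound. Only two small points deserve an extra line in a final write-up. First, in the case $\rho(U)=\F_2^2$ you conclude that $\langle V_0,\alpha\rangle$ is an $\cM$-subspace after checking only the pairs from $V_0\times V_0$ and $V_0\times\{\alpha\}$; this needs the standard reduction $D_uD_wf=D_uD_{u+w}f$ together with $D_aD_{b+c}f=D_aD_bf+(D_aD_cf)^b$, or, more cleanly, apply your identity to arbitrary pairs from the subspace $U_0+\langle(\alpha,1,0)\rangle\subseteq U$, whose projections to $\F_2^n$ sweep all of $\langle V_0,\alpha\rangle$ and whose $z_2$-components vanish, giving $D_aD_bg=D_aD_bp=0$ for \emph{all} $a,b\in\langle V_0,\alpha\rangle$ directly. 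Second, in the subcase $\dim\rho(U)=0$ one would get $\dim V_0=n/2+1$, which is impossible for bent $g$, so only $\dim\rho(U)=1$ actually occurs there --- this is the precise content of your phrase ``forcing $\dim V_0=n/2$''. Both are cosmetic. What your approach buys is a short, self-contained proof of a result the paper only cites; what the cited approach buys is generality, since Theorem~\ref{th: formofMsubspaces4concatenation} treats arbitrary concatenations $f_1||f_2||f_3||f_4$ rather than the symmetric pattern $g||h||g||(h+1)$ that your interpolation exploits.
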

By the construction, the functions $q$ and $g$ share an $(m-1)$-dimensional $\cM$-subspace $\F_2^{m-1} \times \{0_{m+1} \}$, but they do not share an $m$-dimensional $\cM$-subspace.
Consequently, Theorem~\ref{th: insideMMgh(5valued)} implies that the function $f\colon \F_2^{2m+2} \to \F_2$ defined by $f=q \vert \vert g \vert \vert q \vert \vert (g+1)$ is a bent function outside $\cM^\#$. We summarize the above discussion in the following result. 
\begin{theo}\label{th:4concatoutsideMM}
	Let $\pi$ be a permutation of $\F_2^m$ with the property $(P_1)$, and let $q \colon \F_2^{2m} \to \F_2$ be the function defined by $q(x,y)=x \cdot \pi(y)$. Let $A$ be any linear permutation of $\F_2^{2m}$ which fixes some $(m-1)$-dimensional subspace of $\F_2^m \times \{0_m \}$, but does not fix the whole subspace $\F_2^m \times \{0_m \}$.
	% for instance $A$ can be defined as in Eq. \eqref{eq:matrixA}. 
	 Let $g \colon \F_2^{2m} \to \F_2$ be the function defined by $g=q \circ A$. Then,  $f \colon \F_2^{2m+2} \to \F_2$ defined by $f=q \vert \vert g \vert \vert q \vert \vert (g+1)$ is a bent function outside $\cM^\#$.
\end{theo}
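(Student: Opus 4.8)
The plan is to reduce the claim to Theorem~\ref{th: insideMMgh(5valued)}, which characterizes $\cM^\#$-membership for concatenations of the exact shape $f = f_1 \vert\vert f_2 \vert\vert f_3 \vert\vert f_4$ with $f_1 = f_3$ and $f_4 = f_2 + 1$. First I would record that both building blocks are bent: $q(x,y) = x \cdot \pi(y)$ lies in $\cM$ because $\pi$ permutes $\F_2^m$, while $g = q \circ A$ is obtained from $q$ by precomposition with the invertible linear map $A$, hence is EA-equivalent to $q$ and therefore also bent. Setting $f_1 = f_3 = q$, $f_2 = g$ and $f_4 = g+1$ (so that $f_4 + 1 = g = f_2$), the concatenation $f = q\vert\vert g\vert\vert q\vert\vert(g+1)$ matches the template of Theorem~\ref{th: insideMMgh(5valued)}; in particular $f$ is automatically bent, since $f_1^* + f_2^* + f_3^* + f_4^* = q^* + g^* + q^* + (g^*+1) = 1$. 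By that theorem, $f$ belongs to $\cM^\#$ if and only if $q$ and $g$ share a common $m$-dimensional $\cM$-subspace, so the whole proof reduces to excluding such a shared subspace.

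The crux is therefore the claim that $q$ and $g$ admit no common $m$-dimensional $\cM$-subspace. I would start from Proposition~III.5 of \cite{PPKZ2023}: because $\pi$ satisfies \eqref{eq: P1}, the unique $m$-dimensional $\cM$-subspace of $q$ is the canonical one $U_0 := \F_2^m \times \{0_m\}$. Hence any common $m$-dimensional $\cM$-subspace $V$ of $q$ and $g$ is forced to equal $U_0$. Next I would transfer the $\cM$-subspace condition for $g$ back to $q$ through the substitution $A$. A direct computation using $D_a(q\circ A) = (D_{Aa}q)\circ A$ gives, for the symmetric second-order derivative, $D_a D_b(q \circ A) = (D_{Aa}D_{Ab}q)\circ A$; since $A$ is a bijection, the left-hand side is the zero function exactly when $D_{Aa}D_{Ab}q = 0$. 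Consequently a subspace $V$ is an $\cM$-subspace of $g$ if and only if its image $A(V)$ is an $\cM$-subspace of $q$.

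Applying this with $V = U_0$, the requirement that $U_0$ be an $\cM$-subspace of $g$ forces $A(U_0)$ to be an $m$-dimensional $\cM$-subspace of $q$, whence $A(U_0) = U_0$ by the uniqueness above. This contradicts the hypothesis that $A$ does not fix $\F_2^m \times \{0_m\}$ setwise (for the explicit $A$ of Eq.~\eqref{eq:matrixA} one sees immediately that $A(U_0)$ contains vectors with nonzero $(m+1)$-th coordinate, so $A(U_0) \neq U_0$). Thus $q$ and $g$ share no $m$-dimensional $\cM$-subspace, and Theorem~\ref{th: insideMMgh(5valued)} yields that the bent function $f$ lies outside $\cM^\#$.

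The main obstacle is precisely the bookkeeping of the derivative transfer: one must get the direction of the transformation right, namely that the $\cM$-subspaces of $q\circ A$ are the preimages under $A$ of those of $q$ (equivalently, $V$ works for $g$ iff $A(V)$ works for $q$), and then combine this contravariance with the uniqueness of the top-dimensional $\cM$-subspace of $q$ to turn the single hypothesis ``$A$ moves $U_0$'' into the desired non-coincidence. Once this and the uniqueness coming from \eqref{eq: P1} are in place, everything else follows directly from the cited results.
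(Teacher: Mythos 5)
Your proposal is correct and follows essentially the same route as the paper: the paper likewise reduces to Theorem~\ref{th: insideMMgh(5valued)} with $f_1=f_3=q$, $f_2=f_4+1=g$, invokes Proposition~III.5 of \cite{PPKZ2023} to get that $\F_2^m\times\{0_m\}$ is the unique $m$-dimensional $\cM$-subspace of $q$, and concludes that $q$ and $g=q\circ A$ share no $m$-dimensional $\cM$-subspace because $A$ moves that subspace. Your explicit derivative-transfer computation $D_aD_b(q\circ A)=(D_{Aa}D_{Ab}q)\circ A$ simply fills in the step the paper compresses into ``by the construction,'' so the two arguments coincide.
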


In what follows, we will utilize the structure of $\cM$-subspaces, in particular we will need \cite[Theorem 3.2]{ISIT-IEEE2024}.

	\begin{theo}\cite{ISIT-IEEE2024}\label{th: formofMsubspaces4concatenation}
		Let $f=f_1 \vert \vert f_2 \vert \vert f_3 \vert \vert f_4 \in\mathcal{B}_{n+2}$ be the concatenation of arbitrary Boolean functions $f_1, \ldots ,f_4 \in \mathcal{B}_n$ and let $W$ be a $(k+2)$-dimensional subspace of $\F_2^{n+2}$, for $k \in \{0, \ldots,n \}$.  Then, $W$ is an $\cM$-subspace of $f$ if and only if $W$ has one of the following forms:
		\begin{enumerate}[a)]
			\item \label{item a, th: form} $W= V \times \{(0,0) \}$, where $V \subset \F_2^n$ is a common $(k+2)$-dimensional $\cM$-subspace of $f_1, \ldots ,f_4$. 
			\item \label{item b, th: form} $W= \langle V \times \{(0,0)\}, (a,1,0) \rangle$, where $V$ is a common $(k+1)$-dimensional $\cM$-subspace of $f_1, \ldots ,f_4$, and $a \in \F_2^n$ is such that 
			$$D_vf_1+D_vf_2^a= D_vf_3+D_vf_4^a=0, \text{ for all } v \in V.$$
			\item \label{item c, th: form} $W= \langle V \times \{(0,0)\}, (a,0,1) \rangle$, where $V$ is a common $(k+1)$-dimensional $\cM$-subspace of $f_1, \ldots ,f_4$, and $a \in \F_2^n$ is such that 
			$$D_vf_1+D_vf_3^a= D_vf_2+D_vf_4^a=0, \text{ for all } v \in V.$$
			\item \label{item d, th: form} $W= \langle V \times \{(0,0)\}, (a,1,1) \rangle$, where $V$ is a common $(k+1)$-dimensional $\cM$-subspace of $f_1, \ldots ,f_4$, and $a \in \F_2^n$ is such that 
			$$D_vf_1+D_vf_4^a= D_vf_2+D_vf_3^a=0, \text{ for all } v \in V.$$
			\item \label{item e, th: form}$W= \langle V \times \{(0,0)\}, (a,0,1), (b,1,0) \rangle$, where $V$ is a common $k$-dimensional $\cM$-subspace of $f_1, \ldots ,f_4$, and $a,b \in \F_2^n$ are such that 
			$D_vf_1+D_vf_3^a= D_vf_2+D_vf_4^a=D_vf_1+D_vf_2^b= D_vf_3+D_vf_4^b=0, \text{ for all } v \in V$,  and
			$f_1(x)+f_2(x+b)+f_3(x+a)+f_4(x+a+b)=0$, for all  $x \in \F_2^n$.
		\end{enumerate}
	\end{theo}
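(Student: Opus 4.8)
The plan is to prove both implications by first reducing the $\mathcal{M}$-subspace condition on $W$ to a finite family of identities indexed by pairs of basis vectors, and then organizing these identities according to how $W$ projects onto the two appended coordinates. I would open with the standard reduction: a subspace $W \subseteq \F_2^{n+2}$ is an $\mathcal{M}$-subspace of $f$ if and only if $f$ is affine on every coset of $W$, which holds if and only if $D_{w_i}D_{w_j}f = 0$ for every pair $w_i,w_j$ in some fixed basis of $W$. The nontrivial direction is that the restriction of $f$ to a coset $c+W$, written in basis coordinates, has all its ANF-coefficients of degree at least two detected exactly by the basis second-order derivatives; their vanishing forces the restriction to be affine, whence all second-order derivatives along $W$ vanish. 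This frees me to work with a basis chosen to respect the coordinate splitting.

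Next I would introduce the projection $\pi\colon\F_2^{n+2}\to\F_2^2$ onto the last two coordinates and set $W_0 = W \cap (\F_2^n\times\{(0,0)\}) = V\times\{(0,0)\}$. Since $\ker(\pi|_W)=W_0$, we have $\dim W = \dim V + \dim\pi(W)$. The five forms (a)--(e) correspond precisely to $\dim\pi(W)=0$, to $\dim\pi(W)=1$ (the three nonzero lines of $\F_2^2$), and to $\dim\pi(W)=2$. In each case I choose a basis consisting of a basis of $V$, contributing vectors $(v,0,0)$, together with one preimage $(a,s,t)\in W$ of each selected generator of $\pi(W)$; this pins down the dimension of $V$ as $k+2$, $k+1$, or $k$, respectively.

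The computational heart is the evaluation of the three types of basis pairs. For two vectors in $W_0$ one gets $D_{(v,0,0)}D_{(w,0,0)}f = D_vD_w f_i$ on the sheet indexed by $(x_{n+1},x_{n+2})$, so vanishing across all sheets is exactly the assertion that $V$ is a common $\mathcal{M}$-subspace of $f_1,\dots,f_4$. For a vector $(v,0,0)$ paired with an extra generator $(a,s,t)$, a direct expansion gives $D_{(v,0,0)}D_{(a,s,t)}f = D_v f_i + D_v f_{\tau(i)}^{a}$ on sheet $i$, where $\tau$ is the involution of $\{1,2,3,4\}$ induced by translation by $(s,t)$; the four sheet-identities collapse to the two displayed ones, since the identity on a sheet and on its $\tau$-partner are related by $x\mapsto x+a$ and hence equivalent. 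Matching $\tau$ to the three nonzero translations yields the pairings $\{1,2\},\{3,4\}$, then $\{1,3\},\{2,4\}$, then $\{1,4\},\{2,3\}$ of cases (b), (c), (d). Finally, in case (e) the remaining pair is the two extra generators $(a,0,1)$ and $(b,1,0)$, and expanding $D_{(a,0,1)}D_{(b,1,0)}f$ on the sheet $(0,0)$ produces the quartic identity $f_1(x)+f_2(x+b)+f_3(x+a)+f_4(x+a+b)=0$; the identities on the other three sheets are translates of this one and contribute nothing new.

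The main obstacle I anticipate is bookkeeping rather than conceptual: keeping the sheet-to-$f_i$ correspondence consistent through the translations, so that the involution $\tau$ and the quartic term emerge with labels matching (b)--(e), and verifying that the cross-conditions are well defined independently of the representatives $a,b$, which are determined only modulo $V$. Here the order of the argument is essential: once $V$ is shown to be a common $\mathcal{M}$-subspace, replacing $a$ by $a+v_0$ with $v_0\in V$ alters $D_v f_{\tau(i)}^{a}$ by $D_{v_0}D_v f_{\tau(i)}=0$, so each condition is invariant. With this in place, the converse implication -- checking that each listed $W$ is genuinely an $\mathcal{M}$-subspace -- is exactly the same computations read backwards, since the stated conditions make every basis-pair second-order derivative vanish.
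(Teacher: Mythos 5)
You should first know that the paper contains no proof of this statement: it is quoted, with citation, as \cite[Theorem 3.2]{ISIT-IEEE2024}, so there is no in-paper argument to compare yours against, and I can only assess your proposal on its own merits. On those merits it is correct and essentially complete. The reduction of the $\cM$-subspace property to vanishing of second-order derivatives along basis pairs (via the ANF of the restriction of $f$ to cosets of $W$) is valid; the decomposition $\dim W=\dim V+\dim\pi(W)$, where $V\times\{(0,0)\}=\ker(\pi\vert_W)$ and $\pi$ is the projection onto the two appended coordinates, correctly produces the five cases; the three types of basis-pair computations yield exactly the listed conditions, including the collapse of the four sheet identities to two via the substitution $x\mapsto x+a$, and the fact that the quartic identity in e) on the sheet $(0,0)$ has the other three sheets as its translates; and your invariance argument (replacing $a$ by $a+v_0$ with $v_0\in V$ changes the cross-conditions by $D_{v_0}D_vf_{\tau(i)}=0$ once $V$ is known to be a common $\cM$-subspace) settles well-definedness in the right logical order.

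The one thing you must make explicit is the sheet-to-$f_i$ labeling, because your case-matching is not convention-free. The pairings you assert --- $(a,1,0)$ giving $\{1,2\},\{3,4\}$, $(a,0,1)$ giving $\{1,3\},\{2,4\}$, $(a,1,1)$ giving $\{1,4\},\{2,3\}$ --- and the quartic $f_1(x)+f_2(x+b)+f_3(x+a)+f_4(x+a+b)=0$ are correct precisely under the convention $f(x,x_{n+1},x_{n+2})=f_{1+x_{n+1}+2x_{n+2}}(x)$, i.e., with the first appended coordinate as the least significant index. Under the concatenation convention stated in this paper's preliminaries (where $f(x,0,1)=f_2$ and $f(x,1,0)=f_3$), the conditions of items b) and c) would be interchanged, and $a$ and $b$ would swap in the quartic of item e). This is an inconsistency between the paper's preliminaries and the cited theorem rather than a flaw in your computation --- indeed, the application of item c) to $f=q\,\vert\vert\, g\,\vert\vert\, q\,\vert\vert\,(g+1)$ in Section~\ref{sec:GMM1outsideMM} forces the least-significant-bit convention --- but as written, a reader who adopts the paper's stated convention would judge your matching of cases b)--d) to be wrong, so you need one sentence fixing the convention before the bookkeeping begins.
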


Now, we demonstrate that the bent 4-concatenation of the form  $f=q \vert \vert g \vert \vert q \vert \vert (g+1)$ is a bent function in the 
$\mathcal{GMM}_{\frac{2m+2}{2}+1}$ class, even though the functions $g, q$ on $\F_2^{2m}$ are members of the $\cM$ class.
Let $a=0_{2m}$ and $V=\F_2^{m-1} \times \{ 0_{m+1} \}$, then, in our setting, for $v \in V$, we have 
$$D_vf_1+D_vf_3^a=D_vq+D_vq=0 \textnormal{  and  } D_vf_2+D_vf_4^a=D_vg+D_v(g+1)=0.$$ 
Thus,  it follows from Theorem \ref{th: formofMsubspaces4concatenation}, item c), that $W= \langle \F_2^{m-1} \times \{ 0_{m+3} \}, (0_{2m+1},1) \rangle$ is an $\cM$-subspace of $f$ of dimension $m$. Denote by $B$ the linear permutation of $\F_2^{2m+2}$ defined by
$$B(x_1, \dots ,x_{m-1},x_{m}, x_{m+1}, y_1, \dots ,y_{m}, y_{m+1}) =  (x_1, \dots ,x_{m-1},y_{m+1}, x_{m+1}, y_1, \dots ,y_{m}, x_{m}),$$
and let $f' \colon \F_2^{2m+2} \to \F_2$ be the function defined by $f'=f \circ B$. Then, by the construction the subspace $\F_2^{m}\times \{ 0_{m+2} \}$ is an $\cM$ subspace of $f'$, that is $f'$ is in the $\mathcal{GMM}_{\frac{2m+2}{2}+1}$ class by Proposition \ref{prop: genclasschar}. On the other hand, since $f'$ is affine equivalent to $f$ and $f$ is outside $\cM^\#$, $f'$ is also outside $\cM^\#$.
%{\color{blue}
\begin{rem}
	1. It is of interest to represent $f\colon\F_2^{2m+2} \to \F_2$ in Theorem \ref{th:4concatoutsideMM} as $f(x,y)=x \cdot \phi(y) + h(y)$, for a suitably decomposed vector space $\F_2^{2m+2}=\F_2^{m+1 -k} \times \F_2^{m+1+k}$ for $k>0$, and investigate the properties of $\phi \colon  \F_2^{m+1+k} \to \F_2^{m+1-k}$. 
	
	\noindent 2. In Theorem~\ref{th:4concatoutsideMM}, one can employ non-affine permutations $\pi$ of $\F_2^m$ satisfying the $(P_2)$ property (introduced in~\cite{PPKZ2023} and further studied in~\cite{KPPZ2024_JOFC}), which guarantees that the Maiorana-McFarland bent function $q(x,y) = x \cdot \pi(y)$ on $\F_2^m \times \F_2^m$ has the unique canonical $m$-dimensional $\cM$-subspace $\F_2^m \times \{ 0_m \}$; see Proposition III.5 in~\cite{PPKZ2023}.
\end{rem}

\section{Characterizing  the unique $\mathcal{M}$-subspace of maximal dimension}\label{sec:uniqueMandP1}

Although large families of bent functions outside $\cM^\#$ can be designed using proper partitions into 2-dimensional affine subspaces, an explicit algebraic specification of the mapping $\phi\colon\F_2^{m+1} \to \F_2^{m-1}$ has not been addressed. In this section, we provide an analysis of $\cM$-subspaces for functions  $f \in \cGM_{n/2+k}$, with particular focus on the conditions under which such functions admit a unique $\cM$-subspace of maximal dimension $n/2-k$. Then, assuming that $f$ is bent such a characterization immediately ensures its exclusion from $\cM^\#$ whenever $k>0$.  For the characterization of a unique $\cM$-subspace of maximal dimension, we will extend the so-called property \eqref{eq: P1} introduced in \cite{PPKZ2023} and propose a generic method of designing 4-to-1 mappings $\phi \colon \F_2^{m+1} \to \F_2^{m-1}$ satisfying this property. However, ensuring the bentness of $f(x,y)=x \cdot \phi(y) + h(y)$ through the conditions in Theorem \ref{theo: GMM+1condition} appears to be hard, see also Open problem \ref{op:overallconditions}.
	\subsection{Ensuring a unique $\mathcal{M}$-subspace of maximal dimension}
We notice that for $f \in \cGM_{m+1}$ the choice of $h \in \cB_{m+1}$ 
%\comment{Maybe we add "seems to only affect..."?}
seems to only affect the bent property of $f$ (assuming that $\phi$ partitions $\F_2^{m+1}$ into disjoint 2-dimensional affine subspaces), whereas the mapping $\phi \colon\F_2^{m+1}\to\F_2^{m-1}$ is decisive when the class inclusion of $f$ is of concern (see also Remark \ref{rem:nodependency-on-h}). 

Moreover, for arbitrary $1\leq k \leq n/2-1$, by considering the second-order derivatives of $f(x,y)=x \cdot \phi(y) + h(y)$, with $x \in \F_2^{n/2-k}$, $y \in \F_2^{n/2+k}$ and $\phi \colon\F_2^{m+k}\to\F_2^{m-k}$, we have the following. 
In general, for distinct $a=(a_1,a_2),b=(b_1,b_2) \in \F_2^{n/2-k}\times \F_2^{n/2+k}$  we get:
\begin{equation}\label{eq:secdergeneral}
D_{(a_1, a_2)}D_{(b_1, b_2)}f(x,y)
=x\cdot\left( D_{a_2}D_{b_2}\phi(y)\right)+  a_1\cdot D_{b_2}\phi(y+ a_2)
+ b_1 \cdot D_{a_2}\phi(y+ b_2) + D_{a_2}D_{b_2}h(y) .
\end{equation}

In particular, when $k=1$ (and therefore $f \in \mathcal{GMM}_{n/2+1}$), we notice that for any $a,b \in  V=\F_2^{n/2-1}\times  \{0_{n/2+1}\} $ we have that 
$D_{(a_1, a_2)}D_{(b_1, b_2)}f(x,y)=0$, since $a_2=b_2=0_{n/2+1}$. However, to extend this $\cM$-subspace $V$ on which the second-order derivatives of $f$ vanish with some $u=(u_1,u_2) \in \F_2^{n/2-1}\times \F_2^{n/2+1}$, we necessarily have that $u_2 \neq 0_{n/2+1}$.  This leads to the condition for the class $\cM^\#$ exclusion, since taking $(a_1,a_2) \in V$ and $b=(u_1,u_2)$ in Eq. \eqref{eq:secdergeneral} leads to $a_1\cdot D_{u_2}\phi(y) \neq 0$ (as  $a_2=0_{n/2+1}$ for $a \in V$). 

 We now generalize the property $(P_1)$, introduced in \cite{PPKZ2023} for permutations,
 to cover a larger class of mappings $\phi\colon\F_2^{n/2+k} \to \F_2^{n/2-k}$, which is then proved important in the design of bent functions outside $\cM^\#$. 
\begin{defi}
	Let $\phi\colon\F_2^{m+k} \to \F_2^{m-k}$, for $k=1, \ldots, m-1$, be a function which has the following property:
	\begin{equation}\label{eq: P1ext} \tag{$P_1^*$}
	D_vD_w \phi  \neq 0_{m-k} \mbox{ for all linearly independent } v,w\in\F_2^{m+k}.
	\end{equation}
	Then, we say that $\phi$ satisfies the extended \eqref{eq: P1ext} property. 
\end{defi}

In \cite{PPKZ2023}, the following result was shown.
\begin{theo}\label{theo: unique P1} \cite{PPKZ2023} Let $\pi$ be a permutation of $\F_2^m$ which satisfies \eqref{eq: P1}.
	Define $f\colon\F_2^{m}\times\F_2^{m} \to \F_2$ by $f(x,y)=x \cdot \pi(y) + h(y)$, for all $x,y \in \F_2^{m}$, where $h\colon\F_2^m \to \F_2$ is an arbitrary Boolean function. Then, the following hold:
	\begin{itemize}
		\item[1)] Permutation $\pi$ has no linear structures.
		\item[2)] The vector space $V=\F_2^m \times \{0_m \}$ is the only $m$-dimensional $\mathcal{M}$-subspace of $f$.
	\end{itemize}
\end{theo}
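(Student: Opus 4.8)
The plan is to establish the two assertions separately, in both cases combining property~\eqref{eq: P1} with the second-order derivative formula~\eqref{eq:secdergeneral} specialized to $k=0$, which for $f(x,y)=x\cdot\pi(y)+h(y)$ reads
\[
D_{(a_1,a_2)}D_{(b_1,b_2)}f(x,y)=x\cdot\bigl(D_{a_2}D_{b_2}\pi(y)\bigr)+a_1\cdot D_{b_2}\pi(y+a_2)+b_1\cdot D_{a_2}\pi(y+b_2)+D_{a_2}D_{b_2}h(y).
\]
For 1) I would argue by contradiction: if $v\neq 0_m$ were a linear structure of $\pi$, then $D_v\pi$ is constant, and choosing $w$ (possible since $m\ge 2$) with $\{v,w\}$ linearly independent gives $D_wD_v\pi=0_m$, contradicting~\eqref{eq: P1}. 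No computation is needed here.

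For 2), I would first note that $V=\F_2^m\times\{0_m\}$ is an $\cM$-subspace, since for $a,b\in V$ we have $a_2=b_2=0_m$ and every term in the displayed formula vanishes. For uniqueness I would assume $U$ is an $m$-dimensional $\cM$-subspace with $U\neq V$ and project onto the second coordinate, writing $W=\{u_2 : (u_1,u_2)\in U\}$. Reading off the coefficient of $x$ forces $D_{a_2}D_{b_2}\pi=0_m$ for all $a,b\in U$; by~\eqref{eq: P1} this means every pair of vectors in $W$ is linearly dependent, so $\dim W\le 1$. Rank--nullity gives $\dim(U\cap V)=m-\dim W$, and the case $\dim W=0$ yields $U\subseteq V$, hence $U=V$ by dimension, a contradiction. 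Thus only $\dim W=1$ remains.

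In that case $U\cap V$ is $(m-1)$-dimensional and projects injectively onto an $(m-1)$-dimensional subspace $S\subseteq\F_2^m$ in the first coordinate, while some $u=(u_1,u_2)\in U$ has $u_2\neq 0_m$. Taking $a=(a_1,0_m)\in U\cap V$ and $b=u$, the $x$-free part of $D_aD_bf=0$ collapses (all terms carrying $a_2=0_m$ drop out) to $a_1\cdot D_{u_2}\pi(y)=0$ for all $y$ and all $a_1\in S$, so $D_{u_2}\pi(y)\in S^\perp=\{0_m,s\}$ for every $y$. The decisive step then invokes that $\pi$ is a permutation: since $u_2\neq 0_m$, injectivity gives $D_{u_2}\pi(y)=\pi(y+u_2)+\pi(y)\neq 0_m$ for all $y$, forcing $D_{u_2}\pi\equiv s$; hence $u_2$ is a nonzero linear structure of $\pi$, contradicting 1). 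This eliminates the last case and proves $U=V$. I expect this final step to be the crux: the dimension inequalities alone do not conclude, and one must observe that the permutation property promotes the membership $D_{u_2}\pi(y)\in\{0_m,s\}$ to constancy, thereby reducing 2) back to 1).
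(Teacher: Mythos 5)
Your proof is correct. The paper itself does not reprove this statement (it is quoted from \cite{PPKZ2023}); the closest in-paper argument is the proof of its generalization, Theorem~\ref{th:uniqueMsubspace}, and your proof follows exactly the same skeleton: read off the coefficient of $x$ in Eq.~\eqref{eq:secdergeneral}, use \eqref{eq: P1} to force the second-coordinate projection $W$ of any $m$-dimensional $\cM$-subspace to have dimension at most~$1$, and then dispose of the residual case $\dim W=1$. Your closing step --- where injectivity of $\pi$ upgrades $D_{u_2}\pi(y)\in S^\perp\setminus\{0_m\}$ to $D_{u_2}\pi\equiv s$, i.e.\ to a linear structure contradicting 1) --- is indeed the crux, and it is precisely what is unavailable for the $2^{2k}$-to-1 mappings of Theorem~\ref{th:uniqueMsubspace}; that is why the generalization must carry an extra necessary-and-sufficient condition instead of unconditional uniqueness. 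One small caveat: your part 1) presupposes the vectorial reading of ``linear structure'' ($D_v\pi$ constant as an element of $\F_2^m$), which is the intended one here, rather than the component-function reading, under which your one-line argument would not suffice.
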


 However, for a mapping $\phi \colon \F_2^{m+k} \rightarrow \F_2^{m-k} $ the situation is somewhat different, especially when relating to the canonical $\cM$-subspace $V=\F_2^{m-k} \times \{0_{m+k} \}$ of $f(x,y)=x \cdot \phi(y) + h(y)$. 

\begin{theo}\label{th:uniqueMsubspace}
	Let $\phi\colon\F_2^{m+k} \rightarrow \F_2^{m-k} $ be a mapping which satisfies \eqref{eq: P1ext}, where $ 0 < k < m$. Define the function $f(x,y)=x \cdot \phi(y) + h(y)$  on $\F_2^{2m}$, where $x \in \F_2^{m-k} $, $y \in \F_2^{m+k}$.  Then, for any $\cM$-subspace $V$ of $f$ it holds that $\dim(V) \leq m-k$. Furthermore, $f$ has the unique $\cM$-subspace $V=\F_2^{m-k} \times \{0_{m+k} \}$ of maximal dimension $m-k$ if and only if there does not exist a nonzero $b_2 \in \F_2^{m+k}$ and an $(m-k-1)$-dimensional subspace $U$ of $\F_2^{m-k}$ such that $u\cdot D_{b_2}\phi(y) = 0$, for all $u \in U$ and $y \in \F_2^{m-k}$. 
    In particular, when $f$ is bent then $f$ is outside $\cM^\#$. 
	\end{theo}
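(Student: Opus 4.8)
The plan is to reduce everything to the second-order derivative formula in Eq.~\eqref{eq:secdergeneral}, separating the roles of the $x$-directions and the $y$-directions of a candidate $\cM$-subspace. Write $V_0=\F_2^{m-k}\times\{0_{m+k}\}$. Since $a_2=b_2=0_{m+k}$ for any $a,b\in V_0$, Eq.~\eqref{eq:secdergeneral} collapses to $0$, so $V_0$ is always an $\cM$-subspace of dimension $m-k$; the whole point is to control every other $\cM$-subspace. I would first record that \eqref{eq: P1ext} forbids nonzero linear structures of $\phi$: if $D_{b_2}\phi\equiv 0_{m-k}$ for some nonzero $b_2$, then $D_vD_{b_2}\phi=0_{m-k}$ for any $v$ independent of $b_2$ (which exists as $m+k\ge 2$), contradicting \eqref{eq: P1ext}.

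For the dimension bound, let $V$ be an arbitrary $\cM$-subspace and let $W$ be the image of $V$ under the projection $(x,y)\mapsto y$. If $W$ contained linearly independent $a_2,b_2$, I would lift them to $a,b\in V$; then the term $x\cdot D_{a_2}D_{b_2}\phi(y)$ in Eq.~\eqref{eq:secdergeneral} is a nonconstant function of $x$ for some $y$, because $D_{a_2}D_{b_2}\phi\neq 0_{m-k}$ by \eqref{eq: P1ext}, contradicting $D_aD_bf\equiv 0$. Hence $\dim W\le 1$. If $\dim W=0$ then $V\subseteq V_0$ and $\dim V\le m-k$. If $\dim W=1$, fix nonzero $b_2$ spanning $W$ and set $U=\{u:(u,0_{m+k})\in V\}$; evaluating Eq.~\eqref{eq:secdergeneral} on $(u,0_{m+k})$ against any $(b_1,b_2)\in V$ leaves only $u\cdot D_{b_2}\phi(y)$, which must vanish for all $y$. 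Since $D_{b_2}\phi\not\equiv 0_{m-k}$ (no linear structures), some $D_{b_2}\phi(y_0)\neq 0_{m-k}$ cuts out a hyperplane containing $U$, so $\dim U\le m-k-1$ and $\dim V=\dim U+1\le m-k$.

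For the uniqueness characterization I would argue both directions from this analysis. If the forbidden configuration $(b_2,U)$ does not exist and $V\neq V_0$ is an $\cM$-subspace of dimension $m-k$, then necessarily $\dim W=1$ and its associated $U$ is $(m-k-1)$-dimensional with $u\cdot D_{b_2}\phi(y)=0$ for all $u\in U$ and all $y$ — exactly the excluded configuration, a contradiction. Conversely, given such a $(b_2,U)$, I would construct $V=\langle U\times\{0_{m+k}\},(b_1,b_2)\rangle$ for an arbitrary $b_1$; as $b_2\neq 0_{m+k}$ this has dimension $m-k$ and differs from $V_0$. Verifying $V$ is an $\cM$-subspace splits, via Eq.~\eqref{eq:secdergeneral}, into three cases according to whether each argument has $y$-part $0_{m+k}$ or $b_2$, using $D_{b_2}D_{b_2}\phi=0_{m-k}$ and the hypothesis $u\cdot D_{b_2}\phi\equiv 0$ to annihilate the surviving inner products. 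I expect this verification to be the main technical obstacle, since one must track the shifted arguments $y$ and $y+b_2$ of $D_{b_2}\phi$ and confirm that each residual inner product pairs $D_{b_2}\phi$ against a vector of $U$ (which is precisely why the hypothesis must hold for \emph{all} $y$).

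Finally, the ``in particular'' claim is immediate from the dimension bound: since $k>0$ we have $m-k<m=n/2$, so $f$ admits no $\cM$-subspace of dimension $n/2$. By Dillon's criterion (Lemma~\ref{lem M-M second}), a bent function lies in $\cM^\#$ only if it has an $(n/2)$-dimensional $\cM$-subspace; therefore a bent $f$ of this form is outside $\cM^\#$.
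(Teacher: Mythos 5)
Your proposal is correct and follows essentially the same route as the paper's proof: both work from Eq.~\eqref{eq:secdergeneral}, use \eqref{eq: P1ext} to rule out two elements of an $\cM$-subspace having linearly independent $y$-parts (and to rule out linear structures of $\phi$), deduce the dimension bound, read off the forbidden pair $(b_2,U)$ from any non-canonical maximal subspace, and conversely build $\langle U\times\{0_{m+k}\},(b_1,b_2)\rangle$ as a second $(m-k)$-dimensional $\cM$-subspace, finishing with Dillon's criterion for the bent case. The case verification you flagged as the main obstacle does go through exactly as you sketched — in the mixed case only $u\cdot D_{b_2}\phi(y)$ survives, and in the case of two elements with $y$-part $b_2$ the $b_1$'s cancel, leaving $(u+u')\cdot D_{b_2}\phi(y+b_2)=0$ by the hypothesis applied at the shifted point — which is precisely the computation implicit in the paper (where $b_1=0_{m-k}$ is chosen for convenience).
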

\begin{proof}
	%We first notice that $D_aD_b f =0$ for any $a,b \in \F_2^{m-k} \times \{0_{m+k} \}$. 
    Due to the term $x\cdot\left( D_{a_2}D_{b_2}\phi(y)\right)$ in Eq.~\eqref{eq:secdergeneral},  an $\cM$-subspace $V$ of $f$ cannot have two vectors $(a_1, a_2), (b_1,b_2) \in V$, where $a_1,b_1 \in \F_2^{m-k}$ and $a_2,b_2 \in \F_2^{m+k}$, such that $a_2$ and $b_2$ are linearly independent, since $\phi$ satisfies the property~\eqref{eq: P1ext}.
    Consequently, any $\cM$-subspace $V$ of $f$ with $\dim(V) \geq m-k+1$, must contain a subset of $m-k+1$ linearly independent vectors of the form $\{ (a_1, 0_{m+k}), \ldots , (a_{m-k}, 0_{m+k}) , (b_1, b_2) \}$, where $b_2 \in \F_2^{m+k} \setminus \{ 0_{m+k} \}$. Furthermore, we have $\F_2^{m-k}= \langle a_1, \ldots , a_{m-k} \rangle$, and from Eq.~\eqref{eq:secdergeneral} it follows that $u\cdot D_{b_2}\phi(y) = 0$, for all $u \in \F_2^{m-k}$ and $y \in \F_2^{m-k}$. However, this implies that $D_{b_2}\phi(y) = 0_{m-k}$ for all $y \in \F_2^{m-k}$, hence $D_{b_2'}D_{b_2}\phi = 0_{m-k}$ for any non-zero $b_2' \in \F_2^{m+k} \setminus \{ b_2 \}$, which contradicts the fact that $\phi$ satisfies \eqref{eq: P1ext}. It follows that $\dim(V) \leq m-k$ for any $\cM$-subspace $V$ of $f$.
    
    Similarly, any $\cM$-subspace $V$ of $f$ with $\dim(V) = m-k$, not equal to $\F_2^{m-k} \times \{0_{m+k} \}$, contains a subset of $m-k$ linearly independent vectors of the form $\{ (a_1, 0_{m+k}), \ldots , (a_{m-k-1}, 0_{m+k}) , (b_1, b_2) \}$, where $b_2 \in \F_2^{m+k} \setminus \{ 0_{m+k} \}$. Furthermore, setting $U= \langle a_1, \ldots , a_{m-k-1} \rangle$, from  Eq.~\eqref{eq:secdergeneral} it follows that $u\cdot D_{b_2}\phi(y) = 0$, for all $u \in U$ and $y \in \F_2^{m-k}$. On the other hand, assume that there exist a nonzero $b_2 \in \F_2^{m+k}$ and an $(m-k-1)$-dimensional subspace $U$ of $\F_2^{m-k}$ such that $u\cdot D_{b_2}\phi(y) = 0$, for all $u \in U$ and $y \in \F_2^{m-k}$. Let $\{ a_1, \ldots , a_{m-k-1} \}$ be a basis for $U$. From Eq.~\eqref{eq:secdergeneral} it follows that the subspace $\langle (a_1, 0_{m+k}), \ldots , (a_{m-k-1}, 0_{m+k}) , (0_{m-k}, b_2) \rangle$, is an $(m-k)$-dimensional $\cM$-subspace of $f$, and since $\F_2^{m-k} \times \{0_{m+k} \}$ is also an $\cM$-subspace of $f$, the function $f$ has at least two $(m-k)$-dimensional $\cM$-subspaces.    
    
   In particular, when $f$ is bent, since $k \geq 1$ and the maximal dimension of $\cM$-subspaces of $f$ is $m-k$, it follows that $f$ is outside $\cM^\#$. 
	%Since $f$ is bent then $\dim(U)\leq m$ for any $\cM$-subspace $U$. 
	\end{proof}

It is important to remark that for $k=1$ the bentness of $f(x,y)=x \cdot \phi(y) + h(y)$ is achieved through the conditions given in Theorem \ref{theo: GMM+1condition}.

\begin{rem}\label{rem:sufficiencyP1*}
	Notice that the property \eqref{eq: P1ext} of $\phi$ (through the term $x\cdot\left( D_{a_2}D_{b_2}\phi(y)\right)$ in Eq. \eqref{eq:secdergeneral}) simplifies the design of bent functions in $\cGM_{m+1}$ outside $\cM^\#$ but it is not a necessary condition. 
	For instance, the mapping $\phi$ in  Example \ref{ex:monomialpermNEW} below does not satisfy \eqref{eq: P1ext} even though the bent function $f(x,y)=x \cdot \phi(y) + h(y)$ is outside $\cM^\#$.
\end{rem}

Now, focusing on the special case $k=1$, we first notice that any mapping  $\phi\colon\F_2^{m+2} \to \F_2^m$ can be represented as $\phi=\sigma_1||\sigma_2||\sigma_3 || \sigma_4$, where $\sigma_i$ are mappings on $\F_2^m$. Here, $\sigma_1(y)=\phi(y,0,0), \ldots, \sigma_4(y)=\phi(y,1,1)$, for $y \in \F_2^m$. The difficulty of specifying suitable mappings  $\phi\colon\F_2^{m+2} \to \F_2^m$ of the form $\phi=\sigma_1||\sigma_2||\sigma_3 || \sigma_4$, 
 is then due to the following. In the first place, $\phi$ must be 4-to-1 and provide a proper partition of $\F_2^{m+2}$ (into disjoint 2-dimensional flats that does not  satisfy Proposition \ref{prop: trivialpartition}). Alternatively, to ensure the exclusion from $\cM^\#$, the mapping $\phi$ should satisfy the condition in Theorem \ref{th:uniqueMsubspace}. Of course, to ensure that  $\phi$ is 4-to-1 one can employ $\sigma_i$ that are permutations, which is however not a necessary condition. 
 
\begin{rem}
	For instance, assume  that $\sigma_1, \sigma_2$ are permutations on $\F_2^m$. Then, one can define $\phi\colon\F_2^{m+2}\to\F_2^{m}$ as 
%	\begin{equation}\label{eq:perm2linear}
$	\phi(y,y_{m+1},y_{m+2})= \sigma_1(y) +  y_{m+1}(y_{m+2}+1) \sigma_2(y),$
where $y \in \F_2^m, y_{m+1} \in \F_2$, so that $\phi= \sigma_1||\sigma_1||(\sigma_1+\sigma_2)||\sigma_1$.
	%\text{, for all } y \in \F_2^m, y_{m+1} \in \F_2.
%	\end{equation}
	The mapping $\phi$ is 4-to-1 if and only if  $\sigma_1+\sigma_2$ is a permutation of $\F_2^m$.  One can  verify that if additionally $\sigma_1$ has the~\eqref{eq: P1} property, then $\phi$ has the~\eqref{eq: P1ext} property. However, $\phi$ does not partition $\F_2^{m+2}$ into 2-dimensional flats  and  therefore is not suitable for our framework. To see this,  notice that writing $\phi=\phi_1||\phi_2||\phi_3||\phi_4$ we have $\phi_1(a)=\phi_2(a)=\phi_4(a)=b$ whereas $\phi_3(a)=\sigma_1(a)+\sigma_2(a)=c$, for some $a,b,c \in \F_2^m$. Since this is valid for any $a \in \F_2^m$ then in general $c \neq b$ and therefore $\phi_3(a')=b$ for some $a \neq a'$. However, $(a,0,0), (a,0,1),(a',1,0),(a,1,1)$ do not build a 2-dimensional flat as their sum is not $0_{m+2}$.
%\end{prop}
\end{rem} 
The following example illustrates that the property \eqref{eq: P1ext} of $\phi$ is not easily achieved even though $\sigma_i$ might posses the same property. However, the bent function $f(x,y)=x \cdot \phi(y) + h(y)$ is still outside $\cM^\#$, where $\sigma_i$ are monomial permutations on $\F_2^m$ and $\phi=\sigma_1||\sigma_2||\sigma_3 || \sigma_4$. 
%{\color{red}
	\begin{ex}\label{ex:monomialpermNEW}
		Let $m=4$ and let $\F_{2^4}^*=\langle a \rangle$, where $a^4 + a + 1=0$. Take $d=7$ and let $\alpha_1:=a^6,\alpha_2:=a^{10}$ and $\alpha_3=a$. One can check with a computer algebra system that the mappings $\sigma_i(x)=\alpha_i x^d$  are permutations of $\F_{2^4}$ with the \eqref{eq: P1ext} property, for all $i=1,2,3$. Moreover, $\sigma_i(x)=\alpha_i x^d$ has no components with linear structures since $\operatorname{wt}(d)=3$. Define the mapping $\phi\colon\F_2^6\to \F_2^4$ as follows:
		$$\phi= \sigma_1 || \sigma_2 || \sigma_3 || (\sigma_1 + \sigma_2 + \sigma_3 ) $$
		Since $\sigma_1 + \sigma_2 + \sigma_3 $ is a permutation, we have that $\phi$ is 4-to-1. Its ANF is given by
		$$\phi(y)=(\phi_1(y),\phi_2(y),\phi_3(y),\phi_4(y)),\quad \mbox{where}$$
		\begin{polynomial}
			\phi_1(y)=y_2 + y_2 y_3 + y_4 + y_1 y_2 y_4 + y_3 y_4 + y_1 y_3 y_4 + y_2 y_3 y_4 + y_3 y_5 +
			y_2 y_3 y_5 + y_1 y_4 y_5 + y_2 y_4 y_5 + y_1 y_2 y_4 y_5 + y_1 y_3 y_4 y_5 + y_1 y_6 +
			y_2 y_6 + y_1 y_3 y_6 + y_1 y_2 y_3 y_6 + y_4 y_6 + y_1 y_4 y_6 + y_2 y_4 y_6 +
			y_1 y_2 y_4 y_6 + y_1 y_3 y_4 y_6 + y_2 y_3 y_4 y_6,
		\end{polynomial}
		\begin{polynomial}
			\phi_2(y)=y_1 y_2 + y_3 + y_1 y_3 + y_4 + y_1 y_4 + y_1 y_3 y_4 + y_1 y_5 + y_3 y_5 +
			y_1 y_3 y_5 + y_1 y_2 y_3 y_5 + y_1 y_2 y_4 y_5 + y_3 y_4 y_5 + y_1 y_3 y_4 y_5 +
			y_1 y_6 + y_1 y_3 y_6 + y_2 y_3 y_6 + y_1 y_2 y_3 y_6 + y_1 y_4 y_6 + y_2 y_4 y_6 +
			y_3 y_4 y_6,
		\end{polynomial}
		\begin{polynomial}
			\phi_3(y)=y_1 + y_2 + y_3 + y_1 y_3 + y_2 y_3 + y_1 y_2 y_3 + y_4 + y_2 y_3 y_4 + y_1 y_5 +
			y_1 y_2 y_5 + y_3 y_5 + y_2 y_3 y_5 + y_1 y_2 y_3 y_5 + y_4 y_5 + y_2 y_4 y_5 +
			y_3 y_4 y_5 + y_1 y_3 y_4 y_5 + y_1 y_2 y_6 + y_3 y_6 + y_1 y_3 y_6 + y_4 y_6 +
			y_1 y_4 y_6 + y_1 y_3 y_4 y_6,
		\end{polynomial}
		\begin{polynomial}
			\phi_4(y)=y_1 + y_2 + y_1 y_3 + y_1 y_2 y_3 + y_4 + y_1 y_4 + y_2 y_4 + y_1 y_2 y_4 +
			y_1 y_3 y_4 + y_2 y_3 y_4 + y_1 y_5 + y_2 y_5 + y_1 y_2 y_5 + y_2 y_3 y_5 +
			y_1 y_2 y_3 y_5 + y_1 y_4 y_5 + y_1 y_3 y_4 y_5 + y_2 y_3 y_4 y_5 + y_1 y_6 + y_2 y_6 +
			y_3 y_6 + y_1 y_3 y_6 + y_2 y_3 y_6 + y_1 y_2 y_3 y_6 + y_4 y_6 + y_2 y_3 y_4 y_6.
		\end{polynomial}
		\noindent Note that, the condition $a_1\cdot D_{b_2}\phi(y) \neq 0$ is satisfied for all non-zero $a_1\in\F_2^4$ and $b_2\in\F_2^6$. Moreover, preimages $\phi^{-1}(a)$ form a proper non-trivial partition of $\F_2^6$. The mapping $\phi$ does not have the \eqref{eq: P1ext} property but it has almost \eqref{eq: P1ext} property in the sense that $D_aD_b\phi=0_4$ iff $\langle a,b\rangle=\langle (0, 0, 0, 0, 1, 0), (0, 0, 0, 0, 0, 1) \rangle$. Finally, we confirm that the mapping $f(x,y)=x \cdot \phi(y)$ admits the unique $\cM$-subspace of maximal dimension 4 given as  $V=\F_2^4 \times \{0_6\}$. In this way, one can find Boolean functions $h\in\mathcal{B}_6$ such that $f(x,y)=x \cdot \phi(y) + h(y)$ is a bent function outside the $\mathcal{M}^\#$ class.
\end{ex}

	\subsection{Specifying mappings $\phi\colon\F_2^{m+2} \to \F_2^{m}$ that generate bent functions outside $\mathcal{M}^\#$}
	Example \ref{ex:monomialpermNEW} indicates that an explicit design of a mapping $\phi$ that ensures the exclusion from $\cM^\#$ is possible. Based on the recent characterization in \cite{ISIT-IEEE2024}, we now provide sufficient conditions for $\phi=\sigma_1||\sigma_2||\sigma_3||(\sigma_1+\sigma_2+\sigma_3)$ so that 
	$f(x,y)=x \cdot \phi(y) + h(y)$ is outside $\cM^\#$. For this purpose, we will again utilize Theorem~\ref{th: formofMsubspaces4concatenation}, i.e., \cite[Theorem 3.2]{ISIT-IEEE2024}. 
		
		\begin{theo}\label{th:Th5extension add1}
		Let $x,y\in \F_2^m, y'=(y,y_{m+1},y_{m+1})\in \F_2^{m+2}$.	Let $\phi \colon \F_2^{m+2} \to \F_2^m$ be a 4-to-1 mapping defined as $\phi=\sigma_1||\sigma_2||\sigma_3||\sigma_4$, where $\sigma_i$ and $\sigma_4=\sigma_1+\sigma_2+\sigma_3$ are mappings over $\F_2^m$, $i=1,2,3$. Define $f(x,y)=x \cdot \phi(y') + h(y')$ and represent $f=f_1||f_2||f_3||f_4$, where $f_i \in \cB_m$. Assume that $\sigma_1, \ldots, \sigma_3$ satisfy the following conditions,
		\begin{enumerate}[a)]
			\item The maximum dimension of a common  $\cM$-subspace of $f_1, \ldots ,f_4$ is equal to $m$ and   $V=\F_2^m \times \{0_m\}$  is the only common  $\cM$-subspace of $f_i$,
			
			\item  $\sigma_1 + \sigma_2$  is a permutation over $\F_2^m$  and  $\deg(\sigma_2)=1$,
			
			\item  $\sigma_1(y)\neq \sigma_2(y+u), \sigma_1(y)\neq \sigma_3(y+u), \sigma_2(y)\neq \sigma_3(y+u) $ for any $u\in \F_2^m$.
		
		\end{enumerate}
		 Then, 
			 $f(x,y')=x \cdot \phi(y') + h(y')$ has the unique $\cM$-subspace $\F_2^m \times\{ 0_{m+2}\}$ of maximal dimension $m$. Moreover, if $f$ is bent, implying that $\phi$ is 4-to-1, then $f$ is outside $\cM^\#$. 
		\end{theo}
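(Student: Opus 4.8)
The plan is to view $f$ as the concatenation $f=f_1\|f_2\|f_3\|f_4$ in the last two coordinates $y_{m+1},y_{m+2}$, so that each $f_i(x,y)=x\cdot\sigma_i(y)+h_i(y)$ on $\F_2^{2m}$ with $h_i$ the corresponding restriction of $h$, and to read off all $\cM$-subspaces of $f$ from Theorem~\ref{th: formofMsubspaces4concatenation}. The canonical space $\F_2^m\times\{0_{m+2}\}$ is an $\cM$-subspace of dimension $m$: it is the type~\ref{item a, th: form} space produced by the common $\cM$-subspace $V=\F_2^m\times\{0_m\}$, since $f$ is affine in $x$. I would then show that no $\cM$-subspace of $f$ has dimension exceeding $m$, and that $\F_2^m\times\{0_{m+2}\}$ is the only one of dimension $m$. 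Once the maximal $\cM$-subspace dimension is pinned at $m<m+1=(2m+2)/2$, Lemma~\ref{lem M-M second} immediately gives $f\notin\cM^\#$ whenever $f$ is bent.

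The computational engine is the identity $D_{(v_1,v_2)}f_i(x,y)=x\cdot D_{v_2}\sigma_i(y)+v_1\cdot\sigma_i(y+v_2)+D_{v_2}h_i(y)$, whose $x$-coefficient is $D_{v_2}\sigma_i$; for a shifted summand $f_i^a$ this coefficient becomes $D_{v_2}\sigma_i(\cdot+a_y)$. Two structural facts from the hypotheses drive everything. First, $\deg\sigma_2=1$ makes $D_{v_2}\sigma_2$ a constant $L(v_2)$ (with $L$ the linear part of $\sigma_2$) and annihilates its second-order derivatives. Second, $\tau:=\sigma_1+\sigma_2$ is a permutation, so $D_{v_2}\tau\equiv0$ forces $v_2=0$, and the image of $\tau$ (being all of $\F_2^m$) cannot lie inside any affine set of size $<2^m$.

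For dimension $m+1$ I would argue case by case. Type~\ref{item a, th: form} is impossible, since the maximal common $\cM$-subspace of the $f_i$ has dimension $m$ by hypothesis~a). In types~\ref{item b, th: form}--\ref{item d, th: form} the auxiliary space $V$ must be the unique $m$-dimensional common $\cM$-subspace $\F_2^m\times\{0_m\}$; evaluating the defining derivative equations on $V$ (where $v_2=0$) collapses them to identities of the form $\sigma_i(y)=\sigma_j(y+a_y)$, contradicting hypothesis~c). In type~\ref{item e, th: form}, $V$ has dimension $m-1$; the equation $D_vf_1+D_vf_2^b=0$ has $x$-coefficient $D_{v_2}\sigma_1+L(v_2)=D_{v_2}\tau$, which forces $v_2=0$ for every $v\in V$, hence $V\subseteq\F_2^m\times\{0_m\}$, and the same equation then reads $\tau(y)\in L(b_y)+V_x^{\perp}$ for all $y$, confining the image of the permutation $\tau$ to a coset of the $1$-dimensional space $V_x^{\perp}$ — impossible for $m\ge2$. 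Thus $f$ has no $\cM$-subspace of dimension $m+1$ (a fortiori none larger), which already proves the final assertion about $\cM^\#$.

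For uniqueness at dimension $m$ the same scheme applies: type~\ref{item a, th: form} returns only $\F_2^m\times\{0_{m+2}\}$, type~\ref{item b, th: form} again forces $V\subseteq\F_2^m\times\{0_m\}$ and then $\tau$ into a coset of a $1$-dimensional space, and type~\ref{item e, th: form} confines $\tau$ to a coset of a $2$-dimensional space, both impossible for $m\ge3$. The delicate cases — and the main obstacle — are types~\ref{item c, th: form} and~\ref{item d, th: form}, where the two derivative equations pair $\sigma_1$ with $\sigma_3$ rather than with the affine $\sigma_2$, so the clean ``$D_{v_2}\tau=0$'' shortcut is unavailable. Here one must substitute $\sigma_4=\sigma_1+\sigma_2+\sigma_3$ into the second equation to extract $D_{v_2}(\sigma_1+\sigma_3)\equiv0$ together with $D_{a_y}D_{v_2}\sigma_1\equiv0$, and then combine these with the requirement that $V$ be a genuine common $\cM$-subspace and with hypothesis~c) (no two of $\sigma_1,\sigma_2,\sigma_3$ are domain-translates of one another) to rule out any nonzero $y$-component in $V$ and any deviation from the canonical space. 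Managing these two cases carefully, while tracking the constraints the $h_i$ must obey, is where the bulk of the technical work lies; everything else reduces to the two levers isolated above.
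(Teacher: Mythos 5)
Your route is the same as the paper's: write $f$ as the concatenation $f_1||f_2||f_3||f_4$ with $f_i(x,y)=x\cdot\sigma_i(y)+h_i(y)$ and run the case analysis of Theorem~\ref{th: formofMsubspaces4concatenation}. The cases you actually complete are handled correctly and essentially as in the paper: type~a) dies by hypothesis~a); types~b)--d) built on the (necessarily canonical) common $m$-dimensional subspace collapse to translate identities $\sigma_i(y)=\sigma_j(y+a_y)$, contradicting hypothesis~c); and every case whose defining equations involve the pair $(f_1,f_2)$ dies because the $x$-coefficient of $D_vf_1+D_vf_2^b$ is $D_{v_2}(\sigma_1+\sigma_2)$, which never vanishes for $v_2\neq 0$, while for $v_2=0$ the equation confines the image of the permutation $\sigma_1+\sigma_2$ to a proper coset (the paper argues this via balancedness of components; your image-size argument is equivalent). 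Since all five types are excluded in dimension $m+1$, your argument fully proves the assertion $f\notin\cM^\#$ for bent $f$, exactly as the paper does.

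The gap is precisely where you place it, and it is worse than ``technical work'': your proposed repair via hypothesis~c) cannot succeed. Consider item~c) of Theorem~\ref{th: formofMsubspaces4concatenation} at dimension $m$, i.e.\ $W=\langle V\times\{(0,0)\},(a,0,1)\rangle$ with $V$ a common $(m-1)$-dimensional $\cM$-subspace. Take $V=V_x\times\{0_m\}$ with $V_x=\{w\}^{\perp}$ for some nonzero $w$ (any such $V$ is automatically a common $\cM$-subspace, since each $f_i$ is affine in $x$) and $a$ with $a_y=0_m$. Because $\sigma_2+\sigma_4=\sigma_1+\sigma_3$, \emph{both} defining equations reduce to the single condition that $\sigma_1+\sigma_3$ takes values in the two-element set $\{0_m,w\}$; the permutation $\sigma_1+\sigma_2$ never enters, hypothesis~c) only forbids $\sigma_1+\sigma_3$ (and its translates) from being identically $0_m$, and hypotheses~a), b) are blind to this. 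Concretely, take $\sigma_1$ a permutation with property~\eqref{eq: P1}, $\sigma_2$ an affine permutation with $\sigma_1+\sigma_2$ a permutation, and set $\sigma_3:=\sigma_1+w$, so $\sigma_4=\sigma_2+w$. Then $\phi$ is 4-to-1, hypothesis~b) holds, hypothesis~a) holds by Theorem~\ref{theo: unique P1} (every common $\cM$-subspace is an $\cM$-subspace of the bent function $f_1$), and hypothesis~c) holds because $\sigma_1$ is non-affine and has no linear structures. Yet $W=\langle V\times\{(0,0)\},(0_{2m},0,1)\rangle$ is a second $m$-dimensional $\cM$-subspace of $f$: every relevant second-order derivative equals either $v_1\cdot(\sigma_1+\sigma_3)(y)=v_1\cdot w=0$ or $v_1\cdot(\sigma_2+\sigma_4)(y)=v_1\cdot w=0$ (the $h_i$ cancel throughout). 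Item~d) behaves identically with $\sigma_2+\sigma_3$ in place of $\sigma_1+\sigma_3$. So the uniqueness-at-dimension-$m$ claim simply does not follow from hypotheses~a)--c), and no amount of care in ``managing these two cases'' will change that. You should also know that the paper's own proof has exactly the same hole: it treats items b)--d) only with the $m$-dimensional $V$ (giving $(m+1)$-dimensional $W$) and item~e) only with $(m-1)$-dimensional $V'$, relying on the $(f_1,f_2)$ pairing; items~c) and d) with $(m-1)$-dimensional $V$ are never addressed. What stands, in both your proposal and the published proof, is the weaker (and for the paper's purposes decisive) conclusion that $f$ has no $(m+1)$-dimensional $\cM$-subspace, hence $f\notin\cM^\#$ when $f$ is bent.
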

		\begin{proof}
		Since $\phi=\sigma_1||\sigma_2||\sigma_3||(\sigma_1+\sigma_2+\sigma_3)$, then 
		 $f(x,y')=f_{1}(x,y)||f_{2}(x,y)||f_{3}(x,y)||f_{4}(x,y)$,
		 where $f_{i}(x, y)=x\cdot \sigma_i(y)+ h_i(y)$, for $i=1,\ldots,4$, and additionally $h_1(y)=h(y,0,0),h_2(y)=h(y,0,1),h_3(y)=h(y,1,0), h_4(y)=h(y,1,1)$.
		 
		 From item $a)$ of Theorem \ref{th: formofMsubspaces4concatenation}, we know that  $W=V\times \{(0,0)\}=\F_2^m \times \{0_{m+2}\}$ is an $\cM$-subspace for $f$.
		  Since $\sigma_1(y)\neq \sigma_2(y+u)$  for any $u\in \F_2^m$, then there must exist one $v\in \F_2^m$ such that 
		 $$ D_{(v,0_m)}f_1(x,y)+D_{(v,0_m)}f_2^a(x,y)= v\cdot (\sigma_1(y)+\sigma_2(y+u))\neq 0,$$  for any $a=(u,0_m)\in \F_2^m\times \{0_{m}\}$. 
		 
		  Using item $b)$ of Theorem \ref{th: formofMsubspaces4concatenation}, we have that    $W= \langle V \times \{(0,0)\}, (a,1,0) \rangle$ is not an $\cM$-subspace of $f$, where $V$ is a common $m$-dimensional $\cM$-subspace of $f_1, \ldots ,f_4$. 
		   Since $ \sigma_1(y)\neq \sigma_3(y+u), \sigma_2(y)\neq \sigma_3(y+u) $ for any $u\in \F_2^m$, similarly, there must exist one $v\in \F_2^m$ such that 
		  $$D_{(v,0_m)}f_1+D_{(v,0_m)}f_3^a\neq 0~ (resp.~D_{(v,0_m)}f_2+D_{(v,0_m)}f_3^a\neq 0 ),$$ 
		 for any $a=(u,0_m)\in \F_2^m\times \{0_{m}\}$.  
		 
		    Similarly, considering items $c)-d)$ of Theorem \ref{th: formofMsubspaces4concatenation}, we deduce that    $W= \langle V \times \{(0,0)\}, (a,0,1) \rangle$ and  $W= \langle V \times \{(0,0)\}, (a,1,1) \rangle$ are not  $\cM$-subspace of $f$ due to the conditions $\sigma_1(y) \neq \sigma_3(y+u)$ and $\sigma_2(y) \neq \sigma_3(y+u)$, respectively.

		 Suppose  that $V'$ is a common $(m-1)$-dimensional $\cM$-subspace of $f_1, \ldots , f_4$, thus we consider item $e)$ in Theorem \ref{th: formofMsubspaces4concatenation}.  
		 %We set $v'=(v^{(1)},v^{(2)})\in V'\setminus \{0_{2m}\}$.   Since $ \sigma_1(y)\neq \sigma_2(y+u) $ for any $u\in \F_2^m$, 
		 Set $b=(b^{(1)},b^{(2)})\in\F_2^{m}\times \F_2^m$,  $v'=(v^{(1)},v^{(2)})\in V'\setminus \{0_{2m}\}$.   It is sufficient to prove that $ D_{v'}f_1+D_{v'}f_2^b \neq 0$, where 
		 	\begin{eqnarray} \label{equ.th5.9}
		 	%\begin{array}{rl}\label{equ.th5.9}
		 				D_{v'}f_1+D_{v'}f_2^b
		 			&=&
		 			x\cdot \left(\sigma_1(y)+\sigma_1(y+v^{(2)})+\sigma_2(y+b^{(2)})+\sigma_2(y+v^{(2)}+b^{(2)})
		 			\right) \nonumber \\
		 			&+&	v^{(1)}\cdot (\sigma_1(y+v^{(2)})+\sigma_2(y+v^{(2)}+b^{(2)})) \\
		 		&	+ &	b^{(1)}\cdot (\sigma_2(y+b^{(2)})+\sigma_2(y+v^{(2)}+b^{(2)}))+D_{v^{(2)}}(h_1(y)+h_2(y+b^{(2)})).\nonumber  
		 		\end{eqnarray}
		 			 	If $v^{(2)}\neq 0_m$, then	since $\deg(\sigma_2)=1$ Eq. \eqref{equ.th5.9}  gives (that the term in the dot product with $x$)
		 		$$ \sigma_1(y)+\sigma_1(y+v^{(2)})+\sigma_2(y)+\sigma_2(y + v^{(2)}) \neq 0_m,$$ %\sigma_1(y)+\sigma_1(y+v^{(2)})+\sigma_2(v^{(2)}),$$
		 	since $\sigma_1+\sigma_2$ is a permutation, thus $D_vf_1+D_vf_2^b\neq 0$.
		 		  
		 		  	If $v^{(2)}= 0_m$, then   $v^{(1)}\neq 0_m$ since $v=(v^{(1)},v^{(2)})\in V'\setminus \{0_{2m}\}$. 
		 		  	We know
		 		  	$\sigma_1(y)+\sigma_2(y+b^{(2)}) $ is also a permutation 	since $\sigma_1+\sigma_2$ is a permutation and $\deg(\sigma_2)=1$. Further, using the fact that all the components of any permutation are balanced Boolean functions,
		 	$$	D_{v'}f_1+D_{v'}f_2^b=v^{(1)}\cdot (\sigma_1(y)+\sigma_2(y+b^{(2)}))\neq 0.$$
		 
		% 	\end{enumerate}

		 	From Theorem \ref{th: formofMsubspaces4concatenation}, we deduce that $f$  has the unique $\cM$-subspace $\F_2^m \times\{ 0_{m+2}\}$ of maximal dimension. Moreover, if $f$ is bent then it is outside $\cM^\#$ by Theorem \ref{th:uniqueMsubspace}. 
		\end{proof}

\subsection{Removing coordinates of permutations} 
In this section, we consider the possibility of deducing the property \eqref{eq: P1ext} after a removal of a suitable number of coordinates of permutations that satisfy \eqref{eq: P1}. 
It is apparent that removing $2k$ coordinate functions from a permutation $\pi \colon \F_2^{n/2 +k} \to \F_2^{n/2 +k}$ results in a mapping $\phi \colon  \F_2^{n/2 +k} \to \F_2^{n/2 -k}$ that is $2^{2k}$-to-1. In the context of  Theorem \ref{th:uniqueMsubspace}, the main question here is whether $\phi$ preserves the property \eqref{eq: P1ext} (thus $D_a D_b \phi \neq 0_{n/2-k}$ for all linearly independent $a, b \in \F_2^{m+k}$) if $\pi$ satisfies the same property.

We first provide  sufficient conditions to  satisfy the property $\eqref{eq: P1ext}$ for certain families of mappings $\phi\colon \F_2^{n/2 +k} \to \F_2^{n/2 -k}$, whose component functions are balanced. 
	\begin{theo}\label{th:mappingswithp1*}
		Let $n \geq 4$ be an even positive integer and $ 0 \leq k  < n/2$.  Let $\phi=(\phi_1,\ldots,\phi_{n/2-k})$ be a mapping from $\F_2^{n/2+k}$ to $\F_2^{n/2-k}$ such that $c\cdot \phi$ is a  balanced function, for any nonzero $c\in \F_2^{n/2-k}$.  For any Boolean function $g \in \cB_{n/2+k}$ , denote $L^{(D_a g)}=\{ v \in \F_2^{n/2+k} \mid D_vD_a g \textnormal{ is constant}\}$, for some nonzero $a \in \F_2^{n/2+k}$.
		 If for any nonzero  $a\in \F_2^{n/2+k}$ we have  $\bigcap\limits_{i=1}^{n/2-k}L^{(D_a \phi_i)}=\{0_{n/2+k},a\}$, 
		then $\phi$ satisfies $(P_1^*)$.
	\end{theo}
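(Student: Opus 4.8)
The plan is to prove \eqref{eq: P1ext} by contraposition, unwinding the definition of the sets $L^{(D_a\phi_i)}$ and reducing everything to the intersection hypothesis. Writing $m=n/2$ (so $n=2m$), recall that $\phi$ satisfies \eqref{eq: P1ext} precisely when $D_vD_w\phi\neq 0_{m-k}$ for every pair of linearly independent $v,w\in\F_2^{m+k}$. I would therefore suppose, toward a contradiction, that there exist linearly independent $v,w\in\F_2^{m+k}$ with $D_vD_w\phi=0_{m-k}$, and seek to violate the assumption that $\bigcap_{i=1}^{m-k}L^{(D_a\phi_i)}=\{0_{m+k},a\}$ for every nonzero $a$.

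The key step is the translation between a vanishing second-order derivative and membership in these linear-structure sets. If $D_vD_w\phi=0_{m-k}$, then coordinatewise $D_vD_w\phi_i=0$ for every $i\in\{1,\dots,m-k\}$; since the all-zero function is in particular constant, the very definition of $L^{(D_w\phi_i)}$ gives $v\in L^{(D_w\phi_i)}$ for each $i$. As $v,w$ are linearly independent we have $w\neq 0_{m+k}$, so the hypothesis applies with $a=w$ and yields $v\in\bigcap_{i=1}^{m-k}L^{(D_w\phi_i)}=\{0_{m+k},w\}$. But over $\F_2$ linear independence of $v$ and $w$ means exactly $v\neq 0_{m+k}$ and $v\neq w$, so $v\notin\{0_{m+k},w\}$, a contradiction. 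Hence no such pair exists and $\phi$ satisfies \eqref{eq: P1ext}.

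Two quick sanity checks should accompany this, to confirm the intersection hypothesis is the right (minimal) condition rather than a vacuous one. First, for any Boolean $g$ one has $D_aD_ag=0$ in characteristic two, and $D_{0_{m+k}}D_ag=0$ trivially, so $\{0_{m+k},a\}\subseteq L^{(D_a\phi_i)}$ for every $i$; thus $\{0_{m+k},a\}$ is \emph{always} contained in the intersection, and the hypothesis is exactly the statement that this containment is an equality. Second, each $L^{(D_a\phi_i)}$ is the linear-structure space of $D_a\phi_i$ and hence a subspace of $\F_2^{m+k}$, so the intersection is a subspace, consistent with $\{0_{m+k},a\}$ being one-dimensional.

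The point worth emphasizing is that this implication is essentially definitional, so there is no real computational obstacle inside the proof itself: the substantive difficulty lies entirely in \emph{verifying} the hypothesis $\bigcap_{i}L^{(D_a\phi_i)}=\{0_{m+k},a\}$ for a concrete $\phi$, which is precisely what is hard in the intended applications. I would also note that the balancedness assumption (that $c\cdot\phi$ is balanced for every nonzero $c\in\F_2^{m-k}$) is not actually invoked in deriving \eqref{eq: P1ext}; it is retained as a standing hypothesis because it is equivalent to $\phi$ being $2^{2k}$-to-$1$, which is what makes $\phi$ admissible in the $\cGM_{m+k}$ framework and allows Theorem~\ref{th:uniqueMsubspace} to conclude that the resulting bent $f$ lies outside $\cM^\#$.
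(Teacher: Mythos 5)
Your proof is correct and matches the paper's argument: the paper likewise observes that for any $b\notin\{0_{n/2+k},a\}$ the intersection hypothesis forces some coordinate $\ell$ with $D_aD_b\phi_\ell$ non-constant, hence $D_aD_b\phi\neq 0_{n/2-k}$; your contradiction framing is just the contrapositive of the same definitional unwinding. Your side remarks (that $\{0_{n/2+k},a\}$ always lies in the intersection and that the balancedness hypothesis is not actually used in this implication) are also accurate and consistent with the paper's proof.
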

	\begin{proof}
		If  $\bigcap\limits_{i=1}^{n/2-k}L^{(D_a \phi_i)}=\{0_{n/2+k},a\}$,
		then for any $b\in \F_2^{n/2+k}\backslash\{0_{n/2+k},a\}$,  there must exist $\ell\in \{1,\ldots,n/2-k\}$ such that $D_aD_b \phi_l \neq constant$, 
		that is, 	we always have 
		$D_aD_b\phi\neq 0_{n/2-k}$. 
		Hence,  $\phi$ satisfies \eqref{eq: P1ext}. 
	\end{proof}
\begin{cor}\label{cor:inheritingP1}
	Let $\Phi=(\Phi_1,\ldots,\Phi_{n/2+k})$ be a permutation on $\F_2^{n/2+k}$ which satisfies the condition of  Theorem \ref{th:mappingswithp1*} so that $\bigcap\limits_{i=1}^{n/2-k}L^{(D_a \Phi_{j_i})}=\{0_{n/2+k},a\}$, for any nonzero $a \in \F_2^{n/2+k}$, where $ \{ j_1,j_2,\ldots, j_{n/2-k}\}\subset \{1,2,\ldots, n/2+k\}$. Then, removing $2k$ coordinates $\Phi_i $  of $\Phi$, where $i\in \{1,2,\ldots, n/2+k\}\setminus \{ j_1,j_2,\ldots, j_{n/2-k}\}$, the derived mapping $\phi\colon \F_2^{n/2+k} \to \F_2^{n/2-k}$ satisfies $(P_1^*)$.
\end{cor}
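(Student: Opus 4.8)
The plan is to recognize this corollary as a direct specialization of Theorem~\ref{th:mappingswithp1*}, so that the only work is to verify that the two hypotheses of that theorem are met by the coordinate-projection mapping $\phi=(\Phi_{j_1},\ldots,\Phi_{j_{n/2-k}})$. The intersection condition $\bigcap_{i=1}^{n/2-k}L^{(D_a\Phi_{j_i})}=\{0_{n/2+k},a\}$ is assumed verbatim in the statement, so the essential point to establish is the balancedness requirement, namely that $c\cdot\phi$ is balanced for every nonzero $c\in\F_2^{n/2-k}$.

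First I would set $\phi_i:=\Phi_{j_i}$ for $i=1,\ldots,n/2-k$, so that $\phi=(\phi_1,\ldots,\phi_{n/2-k})$ is exactly the mapping obtained by discarding the $2k$ coordinates $\Phi_i$ with $i\notin\{j_1,\ldots,j_{n/2-k}\}$. For a nonzero $c=(c_1,\ldots,c_{n/2-k})\in\F_2^{n/2-k}$, I would observe that $c\cdot\phi=\sum_{i=1}^{n/2-k}c_i\Phi_{j_i}=d\cdot\Phi$, where $d\in\F_2^{n/2+k}$ is the vector supported on $\{j_1,\ldots,j_{n/2-k}\}$ with $d_{j_i}=c_i$ and zeros elsewhere. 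Since $c\neq 0_{n/2-k}$, the vector $d$ is nonzero, and because $\Phi$ is a permutation of $\F_2^{n/2+k}$, every nonzero component $d\cdot\Phi$ is balanced. Hence $c\cdot\phi$ is balanced for all nonzero $c$, which is precisely the balancedness hypothesis of Theorem~\ref{th:mappingswithp1*}.

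With both hypotheses of Theorem~\ref{th:mappingswithp1*} in hand---the balancedness just verified and the intersection condition assumed for every nonzero $a\in\F_2^{n/2+k}$---I would invoke that theorem directly to conclude that $\phi$ satisfies $(P_1^*)$. I do not expect a genuine obstacle here: the corollary is essentially a reindexing of Theorem~\ref{th:mappingswithp1*}, and the only substantive remark is that the permutation property of $\Phi$ is exactly what transfers balancedness to any coordinate-projection $\phi$, thereby making the theorem applicable. The one point deserving care is the bookkeeping of which indices survive the projection, so that the component functions of $\phi$ are correctly identified with $\Phi_{j_1},\ldots,\Phi_{j_{n/2-k}}$ and the stated intersection hypothesis lines up with the balancedness argument on the same set of coordinates.
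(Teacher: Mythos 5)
Your proposal is correct and follows essentially the same route as the paper: the intersection condition transfers verbatim to $\phi$, balancedness of every nonzero combination $c\cdot\phi$ follows because it equals $d\cdot\Phi$ for a nonzero $d$ supported on the surviving coordinates (and $\Phi$ is a permutation), and then Theorem~\ref{th:mappingswithp1*} applies. Your write-up is in fact slightly more explicit than the paper's two-sentence proof, which asserts the balancedness transfer without spelling out the identification $c\cdot\phi=d\cdot\Phi$.
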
 
\begin{proof}
	Since the condition $\bigcap\limits_{i=1}^{n/2-k}L^{(D_a \Phi_{j_i})}=\{0_{n/2+k},a\}$  is satisfied for $\Phi$, clearly it is valid for $\phi\colon \F_2^{n/2+k} \to \F_2^{n/2-k}$. We notice that since $\Phi$ is a permutation all its component functions are balanced, so are those of $\phi$. 
\end{proof}
%{\color{blue}
The above results essentially enable the use of special class of permutation polynomials $\Phi$ over $\F_2^{n/2+k}$ that satisfy the standard \eqref{eq: P1} property, which may induce  the property \eqref{eq: P1ext} for $\phi\colon\F_2^{n/2+k} \to \F_2^{n/2-k}$. For instance, the monomial permutations over $\F_2^{n/2+k}$ of the form $\Phi(y)=y^d$ were analyzed in terms of the linear structures of their component functions. More precisely, when $\operatorname{wt}(d) >2$ it was shown in \cite{Pascale_Gohar2010} that the component functions of $\Phi$ do not admit linear structures. It can be easily confirmed that removing $2k$ coordinates of $\Phi$ the component functions of $\phi\colon\F_2^{n/2+k} \to \F_2^{n/2-k}$ do not admit linear structures as well.  

However, inheriting the property  \eqref{eq: P1ext} from \eqref{eq: P1} is not  easily achieved. The problem is that we might have the case 
that $D_aD_b \Phi(y)= \Phi(y) + \Phi(y+a) + \Phi(y+b) + \Phi(y+a+b)=(0_{n/2-k}, u),$
where $u \in \F_2^{2k}$ is nonzero. Then, removing the last $2k$ coordinates of $\Phi$, hence specifying $\phi \colon \F_2^{n/2+k} \to \F_2^{n/2-k}$,  we get   $D_aD_b \phi(y)= \phi(y) + \phi(y+a) + \phi(y+b) + \phi(y+a+b)=0_{n/2-k}$ so that \eqref{eq: P1ext} is not satisfied.

\begin{ex}\label{ex:P1extnotbent}
	Consider the APN permutation $\pi(y)=y^{d}$ with $d=14$ on $\F_{2^5}$. Since $\pi$ is APN, it has the \eqref{eq: P1} property and no linear structures, see \cite{PPKZ2023}. Now, we use this permutation to construct a 4-to-1 mapping $\phi\colon\F_2^5\to \F_2^3$, which has the \eqref{eq: P1ext} property and additionally satisfies the following condition in Theorem~\ref{th:uniqueMsubspace}:
	\begin{equation}\label{eq: unique M-subspace}
		\mbox{there does not exist a nonzero } a_2 \in \F_2^{5} \mbox{ s.t. } a_1\cdot D_{a_2}\phi(y) = 0, \mbox{ for all nonzero } a_1 \in \F_2^{3},
	\end{equation}
	which follows from \cite{Pascale_Gohar2010}, as mentioned previously. \newline
 Let $\pi(y)=\pi(y_1,y_2,y_3,y_4,y_5)=(\pi_1(y),\pi_2(y),\pi_3(y),\pi_4(y),\pi_5(y))$ be the ANF of $\pi$. Define the mapping $\phi\colon\F_2^5\to\F_2^3$ by $\phi(y)=(\pi_1(y),\pi_2(y),\pi_3(y))$, for $y\in\F_2^5$. Its ANF is given by:
	$$\phi(y)= \scalebox{0.85}{$\begin{pmatrix}
			y_1 y_2 y_5 + y_1 y_3 y_4 + y_1 y_4 y_5 + y_1 y_5 + y_1 + y_2 y_3 y_5 + y_2 y_4 + y_2 y_5 + y_2 + y_3 y_4 y_5 + y_3 y_4 + y_4 y_5 + y_4 + y_5 \\
			y_1 y_2 y_3 + y_1 y_3 + y_1 y_4 + y_1 y_5 + y_2 y_3 y_5 + y_2 y_4 y_5 + y_2 y_4 + y_2 y_5 + y_3 y_4 y_5 + y_3 y_4 + y_3 y_5 + y_3 + y_4 \\
			y_1 y_2 y_3 + y_1 y_2 y_4 + y_1 y_2 y_5 + y_1 y_2 + y_1 y_3 y_5 + y_1 y_4 + y_2 y_3 y_4 + y_2 y_4 y_5 + y_2 y_4 + y_2 y_5 + y_2 + y_3 + y_4 
		\end{pmatrix}$}^T.$$
	With a computer algebra system, it is possible to check that for the mapping $\phi\colon\F_2^5\to\F_2^3$ the condition in Eq.~\eqref{eq: unique M-subspace} is satisfied. Also,  for all 155 2-dimensional subspaces $\langle a, b\rangle$ of $\F_2^5$ it holds that $D_a D_b \phi \neq 0_5$, so $\phi$ satisfies \eqref{eq: P1ext}, which can also be deduced using Corollary \ref{cor:inheritingP1}. In this way, for any Boolean function $h\in\mathcal{B}_5$, the function $f(x,y)=x \cdot \phi(y) + h(y)$  on $\F_2^{8}$, where $x \in \F_2^{3} $, $y \in \F_2^{5}$, has the unique $\cM$-subspace $\F_2^{3} \times \{0_{5} \}$ of maximal dimension $3$. However, the collection $\left\lbrace \phi^{-1}(a) \mid a \in \F_2^{3} \right\rbrace$ is not a partition of $\F_2^{5}$ into 2-dimensional affine subspaces. It is easy to see that for instance the set $$\phi^{-1}(0_3)=\{ (0, 0, 0, 0, 0), (1, 0, 0, 1, 0), (1, 1, 0, 0, 0), (1, 1, 0, 0, 1) \}$$ is not a flat. Therefore, for any Boolean function $h\in\mathcal{B}_5$, the function $f(x,y)=x \cdot \phi(y) + h(y)$  on $\F_2^{8}$ is not bent.
\end{ex}
%}
Once again the partition into 2-dimensional affine subspaces is decisive in achieving bentness, which leads to the following important research task. 
\begin{op}\label{op:overallconditions}
	Specify 4-to-1 mappings $\phi\colon\F_2^{m+1} \to \F_2^{m-1}$ that satisfy the condition in Theorem~\ref{theo: GMM+1condition} and additionally meet the conditions of Theorem~\ref{th:uniqueMsubspace} (or Theorem~\ref{th:mappingswithp1*}).
\end{op}
%{\color{blue}
	The hardness of the above open problem is further justified by the following result.  

\begin{prop}\label{prop:f_in_MM}
	Let $f\in\mathcal{GMM}_{m+1}$ be a bent function in $2m$ variables. Then, $f\in\mathcal{M}$ if and only if $f$ can be written as $f(x',y')= x' \cdot \phi(y') + h(y')$, where $\phi\colon \F_2^{m+1}\to\F_2^{m-1}$ and $h\in\mathcal{B}_{m+1}$ (in addition to the conditions of Theorem~\ref{theo: GMM+1condition}) satisfy the following two conditions:
	\begin{enumerate}
		\item[1)] The 4-to-1 mapping $\phi$ depends only on the last $m$ variables.
		\item[2)] The Boolean function $h'\in\mathcal{B}_m$ defined by $h'(y):=h(0,y)+h(1,y)$, for all $y\in\F_2^m$, is balanced and extends $\phi(y)=(\phi_1(y),\ldots,\phi_{m-1}(y))$ to a permutation over $\F_2^m$.
	\end{enumerate}
\end{prop}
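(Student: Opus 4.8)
The plan is to prove both directions by a single coordinate-regrouping argument, using that the only difference between the $\cGM_{m+1}$ shape (variables split as $(m-1)+(m+1)$) and the strict $\cM$ shape (split as $m+m$) is whether one distinguished bit is treated as an input variable or as a support variable. Throughout I would write $y'=(y_0,y)\in\F_2\times\F_2^m$, so that ``the last $m$ variables'' means $y=(y_1,\dots,y_m)$, and I would use the interpolation identity $h(y_0,y)=h(0,y)+y_0\,h'(y)$, which holds because $h(\cdot,y)$ is affine in the single bit $y_0$.

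For the direction ($\Leftarrow$), I would assume a representation satisfying 1) and 2). Condition 1) gives $\phi(y')=\phi(y)$, and substituting the interpolation identity yields
\[
f(x',y')=x'\cdot\phi(y)+y_0\,h'(y)+h(0,y)=(x',y_0)\cdot\bigl(\phi(y),h'(y)\bigr)+h(0,y).
\]
Setting $X:=(x',y_0)$, $Y:=y$, $\pi:=(\phi_1,\dots,\phi_{m-1},h')$ and $H(Y):=h(0,Y)$ gives $f(X,Y)=X\cdot\pi(Y)+H(Y)$, where crucially $X$ consists of the first $m$ original coordinates and $Y$ of the last $m$. Condition 2) says exactly that $\pi$ is a permutation of $\F_2^m$, so $f\in\cM$.

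For the direction ($\Rightarrow$), I would simply run this computation backwards. If $f\in\cM$, write $f(u,v)=u\cdot\pi(v)+H(v)$ with $u,v\in\F_2^m$ and $\pi$ a permutation, and peel the last coordinate off $u$ and off $\pi$:
\[
f=(u_1,\dots,u_{m-1})\cdot(\pi_1(v),\dots,\pi_{m-1}(v))+u_m\,\pi_m(v)+H(v).
\]
This is precisely a $\cGM_{m+1}$ representation with $x'=(u_1,\dots,u_{m-1})$, $y'=(u_m,v)$, $\phi=(\pi_1,\dots,\pi_{m-1})$ depending only on the last $m$ variables $v$ (condition 1), and $h(u_m,v)=u_m\pi_m(v)+H(v)$, so that $h'=\pi_m$. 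Since $\pi$ is a permutation, $\pi_m$ is balanced and $(\phi_1,\dots,\phi_{m-1},\pi_m)=\pi$ is a permutation, which is condition 2). Because the $\cGM_{m+1}$ representation is uniquely determined once the split is fixed (the affine restriction $x'\mapsto f(x',y')$ reads off $\phi(y')$ as its linear part and $h(y')$ as its constant term), this exhibited representation is the only one, and it obeys both conditions.

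The argument is essentially a one-line computation in each direction, so the only thing that will require care is the bookkeeping of the coordinate splits: I must check that the block $X=(x',y_0)$ produced in ($\Leftarrow$) really is the first $m$ original coordinates, so that $f$ lands in the strict $\cM$ class with its standard split, and dually that the peeled bit $u_m$ in ($\Rightarrow$) lands in the $y'$-block of the standard $\cGM_{m+1}$ split; both work because each split keeps the last $m$ coordinates inside the support region. I would also remark that the ``balanced'' clause in 2) is redundant, since it follows from $\pi$ being a permutation, but it usefully records that $\phi$ viewed on $\F_2^m$ is $2$-to-$1$ — consistent with the original $4$-to-$1$ map on $\F_2^{m+1}$ losing a factor of two once the dummy variable $y_0$ is dropped.
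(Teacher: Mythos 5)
Your proof is correct and takes essentially the same route as the paper's: both directions rest on the same coordinate regrouping (absorbing the distinguished bit $y_0 = x_m$ into the input block, respectively peeling it off a strict $\cM$ representation) combined with the interpolation identity $h(y_0,y) = h(0,y) + y_0\,h'(y)$ and the identification $\pi = (\phi_1,\ldots,\phi_{m-1},h')$. Your two added remarks — uniqueness of the representation once the canonical split is fixed, and redundancy of the ``balanced'' clause in condition 2) — are correct but not needed for the statement.
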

\begin{proof}
	Let $f\in\mathcal{M}$. Then,  $f(x,y)=x \cdot \pi(y)  + g(y)$, where $x=(x_1,\ldots,x_{m})$ and $y=(y_1,\ldots,y_{m})$ and $\pi$ is a permutation over $\F_2^m$. Let $\pi(y)=(\phi(y),\pi_m(y))=(\pi_1(y),\ldots,\pi_{m-1}(y),\pi_{m}(y))$ and $x=(x',x_m)=((x_1,\ldots,x_{m-1}),x_m),$ $y'=(x_m, y)=(x_m,y_1,\ldots,y_m)$. One can see that
	\begin{equation*}
	\begin{split}
	f(x,y)&=x \cdot \pi(y)  + g(y)\\
	&=x' \cdot \phi(y)  + x_m \pi_m(y) + g(y) \\
	&= x' \cdot \phi(y') +h(y')=f(x',y'),
	\end{split}
	\end{equation*}
	where $h\in\mathcal{B}_{m+1}$ is defined by $h(y')=h(x_m,y)=x_m \pi_m(y) + g(y)$, for $x_m\in\F_2,y\in\F_2^m$. 
	
	Thus, the first claim follows, since $\phi(y')=\phi(x_m, y)=\phi(y)$ holds for all $x_m\in\F_2$, and all $y\in\F_2^m$.  Clearly, the mapping $h'\in\mathcal{B}_m$ defined for all $y\in\F_2^m$  by $h'(y)=h(0,y)+h(1,y)=\pi_m(y)$  is balanced since $\pi_m\in\mathcal{B}_m$ is a coordinate function of the permutation $\pi$ over $\F_2^m$.
	
	Now, assume that a bent function $f\in\mathcal{GMM}_{m+1}$ can be written as $f(x',y')=x' \cdot \phi(y')+h(y')$, where $\phi\colon \F_2^{m+1}\to\F_2^{m-1}$ and $h\in\mathcal{B}_{m+1}$ (in addition to the conditions of Theorem~\ref{theo: GMM+1condition}) satisfy the conditions 1) and 2). Then, 
	\begin{equation*}
	\begin{split}
	f(x',y')&= x' \cdot \phi(y) + x_m (h(0,y)+h(1,y)) + h(0,y) \\
	&= x' \cdot \phi(y) + x_m \pi_m(y) + g(y)= x \cdot \pi(y) + g(y)=f(x,y),\\
	\end{split}
	\end{equation*}
	where  $g(y):=h(0,y)$ and $\pi_m(y):=h(0,y)+h(1,y)$, for $y\in\F_2^m$. Then, the mapping $\pi=(\pi_1,\ldots,\pi_{m-1},\pi_m)$ is a permutation of $\F_2^m$ and $g\in\mathcal{B}_m$, hence $f\in\mathcal{M}$. 
\end{proof}	

	\begin{ex}
		Let $n=2m=8$. Consider the following 8-variable Boolean bent function $f\in\mathcal{GMM}_5$ given by its ANF as follows
		\begin{polynomial}
			(x_1,x_2,x_3,y_1,y_2,y_3,y_4,y_5)\mapsto x_1\phi_1(y_1,\ldots,y_5)+x_2\phi_2(y_1,\ldots,y_5)+x_3\phi_3(y_1,\ldots,y_5)+h(y_1,\ldots,y_5)= x_1 (y_2 y_3 + y_2 y_5 + y_3 y_5 + y_4 y_5 + y_2 y_4 y_5 + y_3 y_4 y_5) +
			x_2 (y_3 + y_4 + y_5 + y_2 y_5 + y_3 y_5 + y_4 y_5) + 
			x_3 (y_2 + y_3 + y_2 y_3 + y_2 y_4 + y_3 y_4 + y_5 + y_4 y_5 + y_2 y_4 y_5 + 
			y_3 y_4 y_5) + 
			y_1 + y_1 y_2 + y_1 y_3 + y_1 y_4 + y_3 y_4 + y_1 y_3 y_4 + y_2 y_5 + y_1 y_2 y_5 + 
			y_1 y_3 y_5 + y_1 y_2 y_3 y_5 + y_2 y_4 y_5 + y_1 y_3 y_4 y_5 + y_2 y_3 y_4 y_5.
		\end{polynomial}
		
		\noindent To see that $f\in\mathcal{M}$, we first observe that none of $\phi_i$ depends on $y_1$. Factoring out $y_1$ in $h(y_1,y_2\ldots,y_5)$ and replacing the variables $y_1=x_4$, $y_2= y_1$, $y_3= y_2$, $y_4= y_3$ and $y_5=y_4$, we get that
		\begin{polynomial}
			f(x,y)= 
			x_1 (y_1 y_2 + y_1 y_4 + y_2 y_4 + y_3 y_4 + y_1 y_3 y_4 + y_2 y_3 y_4) + x_2 (y_2 + y_3 + y_4 + y_1 y_4 + y_2 y_4 + y_3 y_4) + x_3 (y_1 + y_2 + y_1 y_2 + y_1 y_3 + y_2 y_3 + y_4 + y_3 y_4 + y_1 y_3 y_4 + 
			y_2 y_3 y_4) + x_4 (1 + y_1 + y_2 + y_3 + y_2 y_3 + y_1 y_4 + y_2 y_4 + y_1 y_2 y_4 + y_2 y_3 y_4) + y_2 y_3 + y_1 y_4 + y_1 y_3 y_4 + y_1 y_2 y_3 y_4,
		\end{polynomial}
		
		\noindent where $x=(x_1,x_2,x_3,x_4)$ and $y=(y_1,y_2,y_3,y_4)$.
	\end{ex}
	%	}

\begin{rem}
	Using the same arguments as in~\cite[Eq. (2)]{LangevinPolujan2024BFA}, one can show that any mapping $\phi'\colon\F_2^{m+1}\to\F_2^{m-1}$ that is affine equivalent to a 4-to-1 mapping $\phi\colon\F_2^{m+1}\to\F_2^{m-1}$ satisfying the conditions of Proposition~\ref{prop:f_in_MM}, gives rise to a bent function $f(x',y')=x' \cdot \phi'(y')+h(y')$ on $\F_2^{m-1}\times\F_2^{m+1}$ that belongs to the completed Maiorana-McFarland class $\mathcal{M}^\#$. In this way, the property of $\phi$  ``to depend only on $m$ variables'' can be disguised by the composition of non-degenerate affine mappings $A$ of $\F_2^{m-1}$ and  $B$ of $\F_2^{m+1}$ with $\phi$, i.e., by defining $\phi'$ as $\phi'=A\circ\phi\circ B$. 
\end{rem}

Based on this observation, it is natural to ask the following.

\begin{op}
	Characterize mappings $\phi\colon\F_2^{m+1}\to\F_2^{m-1}$ satisfying the conditions of Theorem~\ref{theo: GMM+1condition} that define bent functions $f\in\mathcal{B}_{2m}$ in $\mathcal{GMM}_{m+1}\setminus\mathcal{M}^\#$.
\end{op}

\section{Conclusions}\label{sec:concl}
In this article, we have provided a full characterization of bent functions of the form $f(x,y) =x \cdot \phi(y) + h(y)$, with $x \in \F_2^{m-1}$ and $y \in \F_2^{m+1}$ that stem from the $\mathcal{GMM}_{m+1}$ class. Their dual bent functions is also specified though  a nice algebraic description is not easily obtained. Proper decompositions of the vector space $\F_2^{m+1}$ into  2-dimensional affine subspaces are of crucial importance, since any  partition 
 that satisfies the condition in Proposition~\ref{prop: trivialpartition}  only leads to bent functions in $\cM^\#$. 
 Although large families of bent functions outside $\cM^\#$ can be designed using proper partitions into 2-dimensional affine subspaces, an explicit algebraic specification of the associated mappings  $\phi\colon\F_2^{m+1} \to \F_2^{m-1}$ remains unclear. In this context, the~\eqref{eq: P1} property—originally introduced in \cite{PPKZ2023} to characterize permutations $\pi$ over $\F_2^m$ satisfying $D_a D_b \pi \neq 0_m$—plays an important role. 
 In this article, we have introduced an extended version of this condition, denoted as~\eqref{eq: P1ext}, which appears to be of similar importance for mappings $\phi\colon \F_2^{m+k} \to \F_2^{m-k}$. Even though the property of being outside $\cM^\#$ is not conditioned on~\eqref{eq: P1ext}, when the class $\cGM_{m+k}$ is considered for $k \geq 1$, it turns out that the property \eqref{eq: P1ext} of $\phi\colon\F_2^{m+k} \to \F_2^{m-k}$ is sufficient to state the exclusion from $\cM^\#$. 
We propose sufficient conditions for these $2^{2k}$-to-1 mappings $\phi$ to satisfy \eqref{eq: P1ext}, but  further investigations are needed (see Open problem~\ref{op:overallconditions}) since $\phi$ additionally must partition $\F_2^{m+k}$ in  a non-trivial (proper) manner (see also Example~\ref{ex:P1extnotbent}). 

\section{Acknowledgments}
Sadmir Kudin is supported in part by the Slovenian Research
Agency (research program P1-0404 and research project J1-4084). Enes Pasalic is supported in part by the Slovenian Research Agency (research program P1-0404 and research projects  J1-1694, J1-4084, J1-2451, N1-0159).
 Fengrong Zhang is supported by the Natural Science Foundation of China under Grant 62372346.
 Haixia Zhao is supported by the Natural Science Foundation of China under Grant 62402132.


\begin{thebibliography}{23}
	
	\bibitem{BFAExtended}
	A. Bapi{\'{c}}, E. Pasalic, F. Zhang, and S. Hod{\v{z}}i{\'{c}}, ``Constructing new superclasses of bent functions from known ones,'' \emph{Cryptography and Communications}, vol. 14, no. 6, pp. 1229--1256, 2022.
	
	\bibitem{BernasconiC1999}
	A. Bernasconi and B. Codenotti, ``Spectral analysis of Boolean functions as a graph eigenvalue problem,'' \textit{IEEE Transactions on Computers}, vol. 48, no. 3, pp. 345--351, 1999.
		
	\bibitem{BaumNeuwirth}
	L. E. Baum and L. P. Neuwirth, ``Decompositions of vector spaces over GF(2) into disjoint equidimensional affine spaces,'' \emph{J. Comb. Theory Ser. A}, vol. 18, no. 1, pp. 88–100, 1975.
	
	\bibitem{Camion1991}
	P. Camion, C. Carlet, P. Charpin, and N. Sendrier, ``On correlation-immune functions,'' in \emph{Advances in Cryptology -- CRYPTO'91}, LNCS, vol. 576, pp. 86--100, 1991.
	
	\bibitem{CC93}
	C. Carlet, ``Two new classes of bent functions,'' in \emph{Advances in Cryptology -- EUROCRYPT'93}, LNCS, vol. 765, pp. 77--101, 1993.
	
	\bibitem{Carlet_Indirect}
	C. Carlet, ``On the secondary constructions of resilient and bent functions,'' in \emph{Coding, Cryptography and Combinatorics}, vol. 23, pp. 3--28, 2004.
	
	\bibitem{Carlet2021}
	C. Carlet, \emph{Boolean Functions for Cryptography and Coding Theory}. London, U.K.: Cambridge Univ. Press, 2021.
	
	\bibitem{CarlMes2016}
	C. Carlet and S. Mesnager, ``Four decades of research on bent functions,'' \emph{Des. Codes Cryptogr.}, vol. 78, no. 1, pp. 5--50, 2016.
	
	\bibitem{Pascale_Gohar2010}
	P. Charpin and G. M. Kyureghyan, ``Monomial functions with linear structure and permutation polynomials,'' in \emph{Finite Fields: Theory and Applications}, ser. Contemp. Math., vol. 518, Amer. Math. Soc., Providence, RI, 2010, pp. 99--111.
	
	\bibitem{Dillon1974}
	J. F. Dillon, ``Elementary Hadamard difference sets,'' Ph.D. dissertation, Univ. Maryland, College Park, MD, USA, 1974.
	
	\bibitem{SHCF}	
	S.~Hod{\v{z}}i{\'{c}}, E.~Pasalic, and Y.~Wei, ``A general framework for
	secondary constructions of bent and plateaued functions,'' \emph{Designs,
		Codes and Cryptography}, vol.~88, no.~10, pp. 2007--2035, 2020.   
	
	
	\bibitem{LangevinPolujan2024BFA}
	P. Langevin and A. Polujan, ``Some classification results on Maiorana-McFarland bent functions,'' in \emph{Proc. 9th Int. Workshop Boolean Functions Appl.}, Paper 6, 2024.

	\bibitem{KolomeecMMFExtensions2025}
	N. Kolomeec and D. Bykov, ``On the Maiorana-McFarland class extensions,'' \emph{arXiv preprint} arXiv:2503.21440, 2025. [Online]. Available: \url{https://arxiv.org/abs/2503.21440}
	
	\bibitem{ISIT-IEEE2024}
	S. Kudin, E. Pasalic, A. Polujan, and F. Zhang, ``When does a bent concatenation not belong to the completed Maiorana-McFarland class?,'' in \textit{2024 IEEE International Symposium on Information Theory (ISIT)}, Athens, Greece, 2024, pp. 1618-1622, doi: 10.1109/ISIT57864.2024.10619438.
	
	\bibitem{IEEE2024}
	S. Kudin, E. Pasalic, A. Polujan, and F. Zhang, ``The algebraic characterization of $\mathcal{M}$-subspaces of bent concatenations and its application,'' \emph{IEEE Trans. Inf. Theory}, vol. 71, no. 5, pp. 3999--4011, May 2025.

	\bibitem{KPPZ2024_JOFC}
	S. Kudin, E. Pasalic, A. Polujan, and F. Zhang, ``Permutations satisfying $(P_1)$ and $(P_2)$ properties and $\ell$-optimal bent functions,'' \emph{submitted}, 2025.
	
	
	\bibitem{Langevin}
	P. Langevin and G. Leander, ``Counting all bent functions in dimension eight 99270589265934370305785861242880,'' \emph{Des. Codes Cryptogr.}, vol. 59, pp. 193--225, 2011.
	
	\bibitem{VanishingFlats2020}
	S. Li, W. Meidl, A. Polujan, A. Pott, C. Riera, and P. Stanica, ``Vanishing flats: a combinatorial viewpoint on the planarity of functions and their application,'' \emph{IEEE Trans. Inf. Theory}, vol. 66, no. 11, pp. 7101--7112, 2020.
	
	\bibitem{Logachev2012}
	O. A. Logachev, A. A. Salnikov, and V. V. Yashechenko, \emph{Boolean Functions in Coding Theory and Cryptography}. Translations of Mathematical monographs, vol. 241, Providence, RI: Amer. Math. Soc., 2012.
	
	\bibitem{McFarland1973}
	R. L. McFarland, ``A family of difference sets in non-cyclic groups,'' \emph{J. Comb. Theory Ser. A}, vol. 15, no. 1, pp. 1--10, 1973.
	
	\bibitem{Bent_Decomp2022}
	E. Pasalic, A. Bapi{\'{c}}, F. Zhang, and Y. Wei, ``Explicit infinite families of bent functions outside the completed Maiorana-McFarland class,'' \emph{Des. Codes Cryptogr.}, vol. 91, no. 7, pp. 2365--2393, 2023.
	
	\bibitem{DAM2020}
	E. Pasalic, F. Zhang, S. Kudin, and Y. Wei, ``Vectorial bent functions weakly/strongly outside the completed Maiorana-McFarland class,'' \emph{Discret. Appl. Math.}, vol. 294, no. 8, pp. 138--151, 2021.
	
	\bibitem{PPKZ2023}
	E. Pasalic, A. Polujan, S. Kudin, and F. Zhang, ``Design and analysis of bent functions using $\mathcal{M}$-subspaces,'' \emph{IEEE Trans. Inf. Theory}, vol. 70, no. 6, pp. 4464--4477, 2024.
	
	\bibitem{PPKZ_BFA23_CCDS}
	A. Polujan, E. Pasalic, S. Kudin, and F. Zhang, ``Bent functions satisfying the dual bent condition and permutations with the {$(\mathcal{A}_m)$} property,'' \emph{Cryptogr. Commun.}, vol. 16, pp. 1235--1256, 2024.
	
	\bibitem{Polujan2020}
	A. Polujan and A. Pott, ``Cubic bent functions outside the completed Maiorana-McFarland class,'' \emph{Des. Codes Cryptogr.}, vol. 88, pp. 1701--1722, 2020.

    \bibitem{Rothaus1976}
	O. S. Rothaus, ``On 'bent' functions,'' \emph{J. Comb. Theory Ser. A}, vol. 20, no. 3, pp. 300--305, 1976.
	
	\bibitem{RothEnes2016}
	F. Zhang, E. Pasalic, Y. Wei, and N. Cepak, ``Constructing bent functions outside the Maiorana-McFarland class using a general form of Rothaus,'' \emph{IEEE Trans. Inf. Theory}, vol. 63, no. 8, pp. 5336--5349, 2017.
	
	\bibitem{OutsideMM}
	F. Zhang, E. Pasalic, N. Cepak, and Y. Wei, ``Bent functions in $\mathcal{C}$ and $\mathcal{D}$ outside the completed Maiorana-McFarland class,'' in \emph{Codes, Cryptology and Information Security, C2SI2017}, LNCS, vol. 10194, pp. 298--313, 2017.
	
	\bibitem{WGZhang2014}
	W. Zhang and E. Pasalic, ``Generalized Maiorana-McFarland construction of resilient Boolean functions with high nonlinearity and good algebraic properties,'' \emph{IEEE Trans. Inf. Theory}, vol. 60, no. 10, pp. 6681--6695, 2014.
	
	\bibitem{DCC2022}
	H. Zhao, Y. Wei, F. Zhang, E. Pasalic, and N. Cepak, ``Two secondary constructions of bent functions without initial conditions,'' \emph{Des. Codes Cryptogr.}, vol. 90, no. 3, pp. 653--679, 2022.
	
	\bibitem{ZPBBInfComp}
	F. Zhang, E. Pasalic, A. Bapi{\'{c}}, and B. Wang, ``Constructions of several special classes of cubic bent functions outside the completed Maiorana-McFarland class,'' \emph{Inf. Comput.}, vol. 297, pp. 105--149, 2024.
	
\end{thebibliography}
\end{document}